\providecommand{\tabularnewline}{\\}
\newcommand{\lyxaddress}[1]{
\par {\raggedright #1
\vspace{1.4em}
\noindent\par}
}
  \theoremstyle{definition}
  \newtheorem{defn}{\protect\definitionname}
\theoremstyle{plain}
\newtheorem{thm}{\protect\theoremname}
  \theoremstyle{remark}
  \newtheorem{rem}{\protect\remarkname}
  \providecommand{\definitionname}{Definition}
  \providecommand{\remarkname}{Remark}
\providecommand{\theoremname}{Theorem}
\begin{document}

\title{Nonlinear model identification and spectral submanifolds for multi-degree-of-freedom
mechanical vibrations}

\author{Robert Szalai$^{1}$\thanks{Corresponding author. Email: r.szalai@bristol.ac.uk},
David Ehrhardt$^{2}$ and George Haller$^{3}$}
\maketitle

\lyxaddress{$^{1}$Department of Engineering Mathematics, University of Bristol,
Merchant Venturers Building, Woodland Road, Bristol BS8 1UB, United
Kingdom}

\lyxaddress{$^{2}$Department of Mechanical Engineering, University of Bristol,
Queen's Building, University Walk, Clifton BS8 1TR, United Kingdom}

\lyxaddress{$^{3}$Institute for Mechanical Systems, ETH Zürich, Leonhardstrasse
21, Zürich, 8092, Switzerland.}
\begin{abstract}
In a nonlinear oscillatory system, spectral submanifolds (SSMs) are
the smoothest invariant manifolds tangent to linear modal subspaces
of an equilibrium. Amplitude-frequency plots of the dynamics on SSMs
provide the classic backbone curves sought in experimental nonlinear
model identification. We develop here a methodology to compute analytically
both the shape of SSMs and their corresponding backbone curves from
a data-assimilating model fitted to experimental vibration signals.
Using examples of both synthetic and real experimental data, we demonstrate
that this approach reproduces backbone curves with high accuracy. 
\end{abstract}

\section{Introduction}

Modal decomposition into normal modes is a powerful tool in linear
system identification (see., e.g., Ewins \citep{EwinsBook}), but
remains inapplicable to nonlinear systems due to the lack of a superposition
principle. Various nonlinear normal mode (NNM) concepts nevertheless
offer a conceptual simplification in the description of small-amplitude
nonlinear vibrations.

For conservative oscillatory systems with no resonance, the Lyapunov
subcenter-manifold theorem (Kelley \citep{KELLEY1967472}) guarantees
the existence of a unique, analytic surface of periodic orbits that
is tangent to any selected two-dimensional modal subspace (or eigenspace)
of the linearised system at the origin. Each periodic orbit in such
a subcenter manifold is a NNM by the classic definition of Rosenberg
\citep{Rosenberg66}. In contrast, Shaw and Pierre \citep{ShawPierre}
call the subcenter manifold itself a NNM. 

Shaw and Pierre \citep{ShawPierre} also extend the latter view to
dissipative systems, envisioning NNMs as invariant manifolds tangent
to modal subspaces of an equilibrium point (see the reviews of Vakakis
\citep{vakakis2001normal}, Kerschen et al. \citep{kerschen2009nonlinear},
Peeters et al. \citep{Peeters}, and Avramov and Mikhlin \citep{mikhlin2010nonlinears,avramov2013review}).
As observed recently, however, by multiple authors (Neild et al. \citep{Neild20140404},
Cirillo et al. \citep{Cirillo2015}, and Haller and Ponsioen \citep{Haller2016}),
such invariant manifolds are non-unique even for linear systems, let
alone for nonlinear ones. Formal Taylor expansions and operational
numerical procedures do nevertheless yield approximate invariant surfaces
in most problems. This effectiveness of the Shaw\textendash Pierre
approach has inspired its formal extension to invariant manifolds
modelled over multiple modes (Pescheck et al. \citep{Pesheck2001}),
as well as to time-dependent invariant manifolds under external harmonic
forcing (see, e.g., Jiang, Pierre and Shaw \citep{jiang2005nonlinear},
Gabale and Sinha \citep{Gabale20112596}).

In a recent mathematical treatment, Haller and Ponsioen \citep{Haller2016}
unites the Rosenberg and Shaw\textendash Pierre NNM concepts for dissipative
systems under possible time-dependent forcing. In this setting, a
\emph{nonlinear normal mode} (NNM) is a near-equilibrium oscillation
with finitely many frequencies. This NNM concept includes the trivial
case of an equilibrium with no (i.e. zero) oscillation frequencies;
Rosenberg's case of a periodic orbit; and the case of a quasiperiodic
oscillation with finitely many rationally independent frequencies.
Haller and Ponsioen \citep{Haller2016} then defines a \emph{spectral
submanifold} (SSM) as the smoothest invariant manifold tangent to
a spectral subbundle along a NNM. For a trivial NNM (equilibrium),
a spectral subbundle is a modal subspace of the linearised system
at the equilibrium, and hence an SSM is the smoothest Shaw\textendash Pierre-type
invariant manifold tangent to this modal subspace. Similarly, for
periodic or quasiperiodic NNMs, an SSM is the smoothest invariant
manifold among those sought formally in time-dependent extensions
of the Shaw-Pierre surfaces (see Fig. (\ref{fig:Node}) for illustration).

\begin{figure}
\begin{centering}
\includegraphics[width=0.7\textwidth]{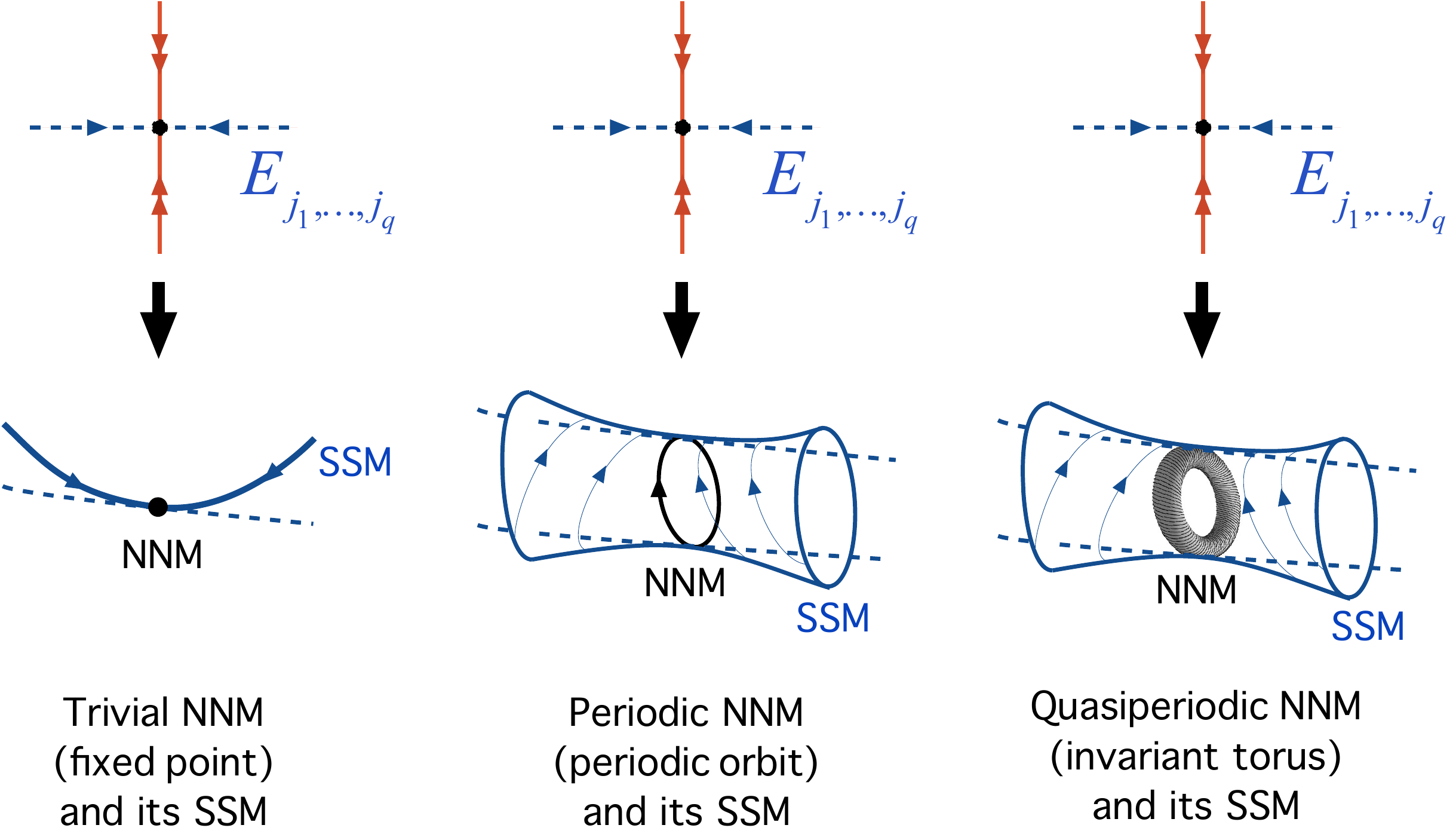}
\par\end{centering}
\caption{The three main types of NNMs (trivial, periodic and quasiperiodic)
and their corresponding SSMs (autonomous, periodic and quasiperiodic).
The NNMs are, or are born out of perturbations of, a fixed point.
The SSMs are, in contrast, the smoothest invariant manifolds tangent
to a subbundle along the NNM whose fibers are close to a specific
modal subspace $E_{j_{1},\ldots,j_{q}}$ of the linearised system.
Here the indices $j_{1},\ldots,j_{q}$ refer to an arbitrary selection
of $q$ two-dimensional modal subspaces of the linearised system (cf.
\citep{Haller2016} for more detail). \label{fig:Node}}
\end{figure}

Here we adopt the above distinction between NNMs and SSMs and restrict
our attention to SSMs of trivial NNMs (fixed points). Even in this
simplest setting, it is not immediate that a single smoothest invariant
manifold tangent to a modal subspace of the fixed point actually exists.
This question, however, is positively answered under certain conditions
by the abstract invariant manifold results of Cabré et al. \citep{CabreLlave2003},
as explained by Haller and Ponsioen \citep{Haller2016}. These results
also provide a computationally efficient way of computing SSMs using
the parametrisation method (cf. Haro et al. \citep{Haro2016} for
a general introduction). 

The reduced dynamics on a single-mode SSM gives an exact nonlinear
extension of the linear dynamics of the modal subspace to which the
SSM is tangent. This extension is characterised by a \emph{backbone
curve,} i.e., a graph expressing the instantaneous vibration amplitude
as a function of the instantaneous vibration frequency along the SSM. 

Without specific concern for SSMs, backbone curves have been approximated
operationally in a number of numerical studies\textbf{.} One approach
assumes that the mechanical system is conservative apart from a weak
damping term that is a linear (or at least odd), position-independent
function of the velocities. In such a system, a periodic forcing producing
a $90^{\circ}$ out-of-phase response preserves exactly a periodic
orbit (i..e, Rosenberg's NNM) of the conservative limit (Peeters et
al. \citep{Peeters2011a,Peeters2011b}). The systematic construction
of external forcing that yields the required $90^{\circ}$ phase lag
for various frequencies is usually referred to as the\emph{ force
appropriation method. }In practice, force appropriation involves a
tedious tuning process that also suffers from unintended interactions
between a shaker and the nonlinear system. 

To expedite the backbone curve construction, one may locate a single
high-amplitude periodic NNM from force appropriation, then turn off
the forcing, and identify \textendash{} by signal processing \textendash{}
the instantaneous amplitude-frequency relation of the decaying vibration
as the backbone curve. Usually referred to as the \emph{resonance
decay method}, this process tacitly assumes that the decaying vibrations
closely follow a Lyapunov subcenter manifold of the conservative limit.
In our terminology, the assumption is that the analytic subcenter
manifold of the conservative limit perturbs smoothly to a unique SSM
under small enough damping. While this statement seems exceedingly
difficult to establish mathematically, it appears to hold true for
small enough viscous damping (see Kerschen et a. \citep{Kerschen2006505}
and Peeters et al. \citep{Peeters2011a}). Therefore, under weak viscous
damping, the resonance decay approach gives consistent results for
SSMs, provided that the decaying vibrations are close to the (yet
unknown) SSM. Small errors in this initialisation are expected to
be persistent for fast SSMs, i.e., SSMs tangent to the modal subspaces
with higher damping. This is because the off-SSM components (errors)
in the initial conditions decay much slower than the in-SSM components
(useful signal), which adds substantial inaccuracy to the backbone
curve construction.

A third approach to backbone-curve construction uses time-dependent
normal forms to construct approximate reduced-order nonlinear models
of the system near each natural frequency. The backbone curve is then
obtained approximately by the method of harmonic balance applied to
the reduced model under resonant parametric forcing (see, e.g.,\textbf{
}\citep{Neild20140404}). An advantage of this method is its ability
to deal with internal resonances, producing intricate multi\textendash dimensional
backbone surfaces. The underlying assumption for all this is that
the higher-order normal form terms coupling the reduced model to the
remaining degrees of freedom are small, and that the oscillations
have small enough amplitudes for the harmonic balance method to be
reasonably accurate. A further important assumption is that an exact
nonlinear model for the system is available for the purposes of computing
a normal form. This tends to limit the practical use of this approach
to simple geometries and materials.

In summary, several methods for numerical or experimental backbone-curve
construction are available, but all make assumptions limiting their
range of applicability. These assumptions include small, position-independent,
and linear viscous damping; small enough oscillation amplitudes; an
accurate initial knowledge of the SSM; and yet unproven results on
the smooth persistence of Lyapunov subcenter manifolds as SSMs under
nonzero damping.

Here we develop a backbone-curve identification method that addresses
most of the above challenges. We infer backbone curves directly from
the dynamics on the SSM, without making any assumptions on the damping
or any reference to the Lyapunov subcenter manifold of the conservative
limit. As input, we assume that tracks of decaying vibration data
are available in the vicinity of $N$ natural frequencies. We simultaneously
assimilate all this data into a nonlinear discrete mapping model of
the near-equilibrium dynamics of the system. We then construct backbone
curves analytically from the nonlinear dynamics on the SSMs of this
discrete mapping. 

We illustrate the generality and accuracy of this approach on two
examples. Our first example is a two-degree-of-freedom nonlinear mechanical
system, for which we perform both an analytic and a data-assimilating
construction of the backbone curves for comparison. Our second example
is a clamped-clamped beam experiment \citep{Ehrhardt2016612}, in
which we determine the first three SSMs simultaneously from measurements
of decaying vibration signals.

\section{Set-up\label{sec:Set-up}}

We start with an $n$-degree of freedom, autonomous mechanical system
of the general form
\begin{equation}
\boldsymbol{M}(\boldsymbol{q})\ddot{\boldsymbol{q}}-\boldsymbol{f}(\boldsymbol{q},\dot{\boldsymbol{q}})=\boldsymbol{0},\qquad\boldsymbol{f}(\boldsymbol{0},\boldsymbol{0})=\boldsymbol{0},\label{eq:starting point}
\end{equation}
where the mass matrix $\boldsymbol{M}(\boldsymbol{q})\in\mathbb{R}^{n\times n}$
and its inverse $\boldsymbol{M}^{-1}(\boldsymbol{q})$ are of class
$C^{r}$, with $r\geq1$, in the generalised coordinate vector $\boldsymbol{q}\in\mathbb{R}^{n}$.
The forcing vector $\boldsymbol{f}(\boldsymbol{q})\in\mathbb{R}^{n}$
is also $C^{r}$ in its arguments, containing all conservative and
non-conservative autonomous forces, both linear and nonlinear. 

Beyond taking nonnegative integer values, the smoothness parameter
$r$ is also allowed to be $r=\infty$ (arbitrarily many times differentiable
functions) or $r=a$ (analytic functions, i.e., $C^{\infty}$ functions
with a convergent Taylor expansion in a complex neighbourhood of $(\boldsymbol{q},\dot{\boldsymbol{q}})=(\boldsymbol{0},\boldsymbol{0})$).
The degree of freedom $n\geq1$ is allowed to be arbitrarily high
and may also be in principle infinity (continuum vibrations), although
some of our assertions about properties of the solutions would need
to be verified on a case-by-case basis in the infinite-dimensional
setting. By the formulation in (\ref{eq:starting point}), $\boldsymbol{q}\equiv\boldsymbol{0}$
is an equilibrium point for the system.

The equivalent first-order form of the differential equation (\ref{eq:starting point})
is obtained by letting $\boldsymbol{x}=(\boldsymbol{q},\dot{\boldsymbol{q}})\in\mathbb{R}^{2n},$
which leads to 
\begin{equation}
\dot{\boldsymbol{x}}=\boldsymbol{\mathcal{F}}(\boldsymbol{x}),\qquad\boldsymbol{\mathcal{F}}(\boldsymbol{0})=\boldsymbol{0,}\qquad\boldsymbol{\mathcal{F}}(\boldsymbol{x})=\left(\begin{array}{c}
\dot{\boldsymbol{q}}\\
\boldsymbol{M}^{-1}(\boldsymbol{q})\boldsymbol{f}(\boldsymbol{q},\dot{\boldsymbol{q}})
\end{array}\right),\label{eq:firstorder}
\end{equation}
where $\boldsymbol{\mathcal{F}}(\boldsymbol{x})\in\mathbb{R}^{2n}$
is $C^{r}$ in its arguments. The solutions $\boldsymbol{x}(t)$ of
(\ref{eq:firstorder}) give rise to the flow map 
\[
\boldsymbol{\varXi}_{t}\colon\boldsymbol{x}_{0}\mapsto\boldsymbol{x}(t),
\]
where $\boldsymbol{x}_{0}=\boldsymbol{x}(0)$. 

The linearisation of (\ref{eq:firstorder}) at the equilibrium point
$\boldsymbol{x}=\boldsymbol{0}$ is given by
\begin{equation}
\dot{\boldsymbol{x}}=\boldsymbol{\mathcal{{A}}}\boldsymbol{x},\qquad\boldsymbol{\mathcal{{A}}}=D\boldsymbol{\mathcal{F}}(\boldsymbol{0}).\label{eq:linearized}
\end{equation}
We assume that $\boldsymbol{\mathcal{{A}}}$ has $n$ pairs of complex
conjugate eigenvalues $\lambda_{1},\bar{\lambda}_{1},\ldots,\lambda_{n},\bar{\lambda}_{n}$,
satisfying 
\begin{equation}
\mathrm{Re}\lambda_{n}\leq\ldots\leq\mathrm{Re}\lambda_{1}<0,\label{eq:assumption on eigenvalues of ODE}
\end{equation}
and hence the equilibrium point is linearly asymptotically stable.
This context is relevant for underdamped structural vibrations, in
which the nonlinear system (\ref{eq:starting point}) is known to
have a stable equilibrium, but the exact nature of its nonlinearities
is unknown.

\section{Sampled nonlinear vibrations\label{sec:Sampled-nonlinear-vibrations}}

To reduce the complexity of the flow map $\boldsymbol{\varXi}_{t}$
in our analysis, we will focus on temporally sampled approximations
to $\boldsymbol{\varXi}_{t}$. Iterating such discrete approximations,
one can still reproduce the main features of the nonlinear dynamics
at regular time intervals. Constructing the sampled dynamics via a
stroboscopic (or Poincaré) map is, however, only feasible when the
full dynamical system (\ref{eq:starting point}) is precisely known,
and hence trajectories from arbitrary initial conditions can be generated.
In practice, this is generally not the case.

Instead, we seek to reconstruct a sampled representation of $\boldsymbol{\varXi}_{t}$
from a limited number of observations of trajectories. The scalar
observable along trajectories can be, for instance, a position or
a velocity coordinate of a certain material point of the mechanical
system (\ref{eq:starting point}). We denote this observable by $\varphi(\boldsymbol{x}):\mathbb{R}^{2n}\to\mathbb{R}$,
i.e., as a scalar function of the state variable $\boldsymbol{x}$
alone. We then build a new state vector $\boldsymbol{\xi}\in\mathbb{R}^{2\nu}$
out of $2\nu$ subsequent observations along trajectories of (\ref{eq:starting point})
by letting 
\begin{equation}
\boldsymbol{\xi=}\boldsymbol{\Phi}(\boldsymbol{x}),\qquad\boldsymbol{\Phi}(\boldsymbol{x}):=\left(\varphi(\boldsymbol{x}),\varphi\left(\boldsymbol{\varXi}_{T}(\boldsymbol{x})\right),\ldots,\varphi\left(\boldsymbol{\varXi}_{T}^{2\nu-1}(\boldsymbol{x})\right)\right)\in\mathbb{R}^{2\nu},\qquad\nu\geq1.\label{eq:defxi}
\end{equation}
We have selected the dimension of $\boldsymbol{\xi}$ to be even (i.e.,
$2\nu$) to ensure basic spectral compatibility between the dynamics
of $\boldsymbol{\xi}$ and the dynamics of $\boldsymbol{x}$, as discussed
in more detail below. 

A sampling map $\boldsymbol{F}$ can be defined as the discrete mapping
advancing the current $2\nu$ observations by one, i.e., from the
observation vector $\boldsymbol{\Phi}(\boldsymbol{x})$ to the observation
vector $\boldsymbol{\Phi}(\boldsymbol{\varXi}_{T}(\boldsymbol{x}))$.
Specifically, we define the mapping $\boldsymbol{F}:\mathbb{R}^{2\nu}\to\mathbb{R}^{2\nu}$
via the relation 
\begin{equation}
\boldsymbol{\xi}_{k+1}:=\boldsymbol{F}(\boldsymbol{\xi}_{k})=\boldsymbol{F}^{k}(\boldsymbol{\xi}_{0}),\qquad k\in\mathbb{N},\qquad\boldsymbol{\xi}_{0}=\boldsymbol{\Phi}(\boldsymbol{x}_{0}),\label{eq:DISCeom}
\end{equation}
or, equivalently, as 
\begin{equation}
\boldsymbol{\Phi}\circ\boldsymbol{\varXi}_{T}=\boldsymbol{F}\circ\boldsymbol{\Phi}.\label{eq:conjugacy}
\end{equation}

By construction, the $\boldsymbol{x}=\boldsymbol{0}$ equilibrium
point of system (\ref{eq:starting point}) is mapped into a fixed
point $\boldsymbol{\xi^{0}}=\Phi(\boldsymbol{0})$ of the sampling
map $\boldsymbol{F}$ under $\boldsymbol{\Phi}$. If necessary, we
shift the $\boldsymbol{\xi}$ coordinates as $\boldsymbol{\xi}\to\boldsymbol{\xi}-\boldsymbol{\xi^{0}}$
to achieve $\boldsymbol{\xi^{0}}=\boldsymbol{0}$. Therefore, without
loss of generality, we may assume 
\begin{equation}
\boldsymbol{F}(\boldsymbol{0})=\boldsymbol{0}.\label{eq:F(0)=00003D0}
\end{equation}
 As a consequence, whenever $\varphi\in C^{r}$ holds, a Taylor expansion
of $\boldsymbol{F}$ at the origin must be of the form
\begin{equation}
\boldsymbol{F}(\boldsymbol{x})=\sum_{\left|\boldsymbol{m}\right|=1}^{r}\boldsymbol{a}_{\boldsymbol{m}}\xi_{1}^{m_{1}}\cdot\ldots\cdot\xi_{2\nu}^{m_{2\nu}}+o(\left|\boldsymbol{x}\right|^{r})=\sum_{\left|\boldsymbol{m}\right|=1}^{r}\boldsymbol{a}_{\boldsymbol{m}}\boldsymbol{\xi}^{\boldsymbol{m}}+o(\left|\boldsymbol{x}\right|^{r})\label{eq:expansion of F}
\end{equation}
for appropriate coefficient vectors $\boldsymbol{a}_{\boldsymbol{m}}\in\mathbb{R}^{2\nu}$
and integer index vector $\boldsymbol{m}=(m_{1},\ldots,m_{2\nu})\in\mathbb{N}^{2\nu}$,
whose norm we measure as $\left|\boldsymbol{m}\right|=\sum_{i=1}^{2\nu}m_{i}.$
We have used here the short-hand notation $\boldsymbol{\xi}^{\boldsymbol{m}}=\xi_{1}^{m_{1}}\cdot\ldots\cdot\xi_{2\nu}^{m_{2\nu}}$. 

\section{Delay embedding\label{sec:Delay-embedding}}

The definition (\ref{eq:DISCeom}) does not immediately clarify the
relation between the dynamics of the flow map $\boldsymbol{\varXi}_{T}$
and the dynamics of the sampling map $\boldsymbol{F}$. The Takens
Embedding Theorem \citep{Takens1981}, however, guarantees that such
a relationship exists on invariant manifolds of generic flow maps
$\boldsymbol{\varXi}_{T}$, at least for generic observables $\varphi$,
as long as the sample length $2\nu$ is long enough. 

Specifically, if $W$ is a compact, $d$-dimensional inflowing-invariant
manifold \citep{Fenichel} of system (\ref{eq:starting point}) and
$2\nu\geq2d+1$ holds, then the set of function pairs $(\boldsymbol{\varXi}_{T},\varphi)$
for which $\boldsymbol{\Phi}(W)$ is diffeomorphic to $W$ is open
and dense in the product space $\mathcal{D}^{r}(W)\times C^{r}(W,\mathbb{R}).$
Here $\mathcal{D}^{r}(W)$ denotes the space of $C^{r}$ diffeomorphisms
of $W$, and $C^{r}(W,\mathbb{R})$ denotes the space of $C^{r}$
scalar functions defined on $W$, with both spaces endowed with the
$C^{r}$ topology. 

Takens's theorem can further be strengthened (Huke \citep{huke2006embedding},
Stark \citep{Stark1999})\textbf{ }when $\boldsymbol{\varXi}_{T}$
has only a finite number of periodic orbits of periods less than $2\nu$,
with all periodic orbit admitting distinct Floquet multipliers. In
this case, for \emph{any} $\boldsymbol{\varXi}_{T},$ there is an
open and dense set of observables $\varphi\in C^{r}(W,\mathbb{R})$
such that $\boldsymbol{\Phi}$ is an embedding of $W$ into $\mathbb{R}^{2\nu}$.
This version of the theorem is particularly helpful in our setting,
as close enough to its asymptotically stable equilibrium at $\boldsymbol{x}=\boldsymbol{0}$,
the flow map $\boldsymbol{\varXi}_{T}$ will have no periodic orbits.
Therefore, it is enough for us to require the observable $\varphi$
to be generic, without having to assume anything further for $\boldsymbol{\varXi}_{T}$.
This simplification holds true on any extended neighbourhood of the
origin that has the required low number of nondegenerate periodic
orbits discussed above.

For such generic observables, $\boldsymbol{\Phi}(W)\subset\mathbb{R}^{2\nu}$
is a diffeomorphic copy of the invariant manifold $W\subset\,\mathbb{R}^{2n}$.
Importantly, $\boldsymbol{\Phi}(W)$ is then an invariant manifold
for the discrete dynamical system (\ref{eq:DISCeom}) by definition.
On this invariant manifold, the map $\boldsymbol{F}$ is conjugate
to the flow map $\boldsymbol{\varXi}_{T}$ by formula (\ref{eq:conjugacy}),
which can now be re-written as 
\begin{equation}
\boldsymbol{F}=\boldsymbol{\Phi}\circ\boldsymbol{\varXi}_{T}\circ\boldsymbol{\Phi}^{-1}:\boldsymbol{\Phi}(W)\to\boldsymbol{\Phi}(W)\label{eq:conjugacy2}
\end{equation}
given that $\boldsymbol{\Phi}$ is a diffeomorphism onto its image. 

Consequently, any coordinate-independent dynamical feature of $\boldsymbol{\varXi}_{T}$
will be shared by the mapping $\boldsymbol{F}$. This will be a crucial
point in our strategy to build a faithful reduced-order model for
system (\ref{eq:starting point}). Specifically, we will use an experimentally
observed scalar $\varphi$ to approximate the Taylor expansion (\ref{eq:expansion of F})
of the mapping $\boldsymbol{F}$. 

Our focus here is the reconstruction of the dynamics of $\boldsymbol{F}$
on two-dimensional invariant manifolds $W$ tangent to two-dimensional
modal subspaces of the linearised flow map $D\boldsymbol{\varXi}_{T}(\boldsymbol{0})$
at the equilibrium point. We thus have $d=2$, and hence the minimal
dimension for the embedding space $\mathbb{R}^{2\nu}$ required by
Takens's theorem is $2\nu\geq5,$ implying $\nu\geq3$ (For our first
example of a two-degree-of-freedom model in section \ref{subsec:Modified-Shaw--Pierre-example},
a comparison with exact analytic computation shows that a reconstruction
with $\nu=2$ already suffices, but this cannot be generally guaranteed.)

The tangent space $T_{\boldsymbol{0}}W$ of $W$ at the origin is
a two-dimensional invariant subspace for $D\boldsymbol{\varXi}_{T}(\boldsymbol{0})$.
Specifically, $T_{\boldsymbol{0}}W$ is the modal subspace corresponding
to a pair of complex conjugate eigenvalues $\left(\mu_{\ell},\bar{\mu}_{\ell}\right)=\left(e^{\lambda_{\ell}T},e^{\bar{\lambda}_{\ell}T}\right)$,
where $(\lambda_{\ell},\bar{\lambda}_{\ell})$ are eigenvalues of
$\boldsymbol{\mathcal{{A}}}$, ordered as in (\ref{eq:assumption on eigenvalues of ODE}).
The conjugacy relationship (\ref{eq:conjugacy2}) and formula (\ref{eq:F(0)=00003D0})
then implies that $\left(\mu_{\ell},\bar{\mu}_{\ell}\right)$ are
also eigenvalues of the linearised sampling map $D\boldsymbol{F}(\boldsymbol{\boldsymbol{0}})$
at $\boldsymbol{\xi}=\boldsymbol{0}$, i.e., we have 
\begin{equation}
\left\{ \mu_{\ell},\bar{\mu}_{\ell}\right\} =\left\{ e^{\lambda_{\ell}T},e^{\bar{\lambda}_{\ell}T}\right\} \subset\mathrm{Spect}\left\{ D\boldsymbol{F}(\boldsymbol{\boldsymbol{0}})\right\} ,\label{eq:spectral containment}
\end{equation}
where $\mathrm{Spect}\left\{ D\boldsymbol{F}(\boldsymbol{\boldsymbol{0}})\right\} $
denotes the spectrum (i.e., set of eigenvalues) of the Jacobian matrix
$D\boldsymbol{F}(\boldsymbol{\boldsymbol{0}})$.

\section{Spectral submanifolds of the sampling map\label{sec:Spectral-submanifolds }}

The linearised sampled dynamics near the fixed point $\boldsymbol{\xi}=\boldsymbol{0}$
is governed by the Jacobian $\boldsymbol{A}=D\boldsymbol{F}(\boldsymbol{0})$
of the sampling map $\boldsymbol{F}$. We assume that this Jacobian
is diagonalisable and collect its complex eigenvectors in a matrix
$\boldsymbol{V}\in\mathbb{C}^{2\nu\times2\nu}$. Introducing the new
coordinate $\boldsymbol{y}\in\mathbb{C}^{2\nu}$ via the relation
\begin{equation}
\boldsymbol{\xi}=\boldsymbol{V}\boldsymbol{y},\label{eq:y definition}
\end{equation}
we obtain the transformed form of (\ref{eq:DISCeom}) as
\begin{equation}
\boldsymbol{y}_{k+1}=\boldsymbol{\Lambda}\boldsymbol{y}_{k}+\boldsymbol{G}(\boldsymbol{y}_{k}),\qquad\boldsymbol{\Lambda}=\mathrm{diag}(\mu_{1},\mu_{2}\ldots,\mu_{2\nu})=\boldsymbol{V}^{-1}\boldsymbol{A}\boldsymbol{V,}\qquad\mu_{2l}=\bar{\mu}_{2l-1},\quad l=1,\ldots,\nu.\label{eq:DISCdiag}
\end{equation}
where $\boldsymbol{G}(\boldsymbol{y})$ are nonlinear coupling terms
with $D\boldsymbol{G}(\boldsymbol{0})=\boldsymbol{0}$. If, specifically,
the $l^{th}$ linear mode of system (\ref{eq:starting point}) is
brought to the standard form
\[
\ddot{\eta}_{l}+2\zeta_{l}\omega_{l}\dot{\eta_{l}}+\omega_{l}^{2}\eta_{l}=0,
\]
with the damping ratio $\zeta_{l}<1$ and undamped natural frequency
$\omega_{l}\in\mathbb{R}$, then we obtain 
\[
\lambda_{l},\bar{\lambda}_{l}=-\zeta_{l}\omega_{l}\pm i\sqrt{1-\zeta_{l}^{2}}\omega_{l},
\]
and hence the eigenvalues of $\boldsymbol{A}$ and $\boldsymbol{\Lambda}$
can be expressed as 
\begin{equation}
\mu_{l},\bar{\mu}_{l}=e^{-\zeta_{l}\omega_{l}T}e^{\pm i\sqrt{1-\zeta_{l}^{2}}\omega_{l}T}.\label{eq:mu_j for proportional damping}
\end{equation}

We recall that if the sampling map $\boldsymbol{F}$ was constructed
from observables along a two-dimensional invariant manifold $W$ of
system (\ref{eq:firstorder}), then $\boldsymbol{\Lambda}$ has a
complex conjugate pair of eigenvalues related to a pair of eigenvalues
of $\boldsymbol{\mathcal{A}}$ through the relationship (\ref{eq:spectral containment}).
In particular, $W$ is tangent to an underdamped modal subspace $E$
of the equilibrium $\boldsymbol{y}=\boldsymbol{0}$ corresponding
to the eigenvalue pair $\mu_{\ell},\overline{\mu}_{\ell}$ for some
for some $\ell\in[1,2n-1],$ as implied by assumption (\ref{eq:assumption on eigenvalues of ODE}). 

The existence of a two-dimensional invariant manifold $W$ tangent
to a two-dimensional spectral subspace $E$ of the linearised system
(\ref{eq:linearized}) was first envisioned in the seminal work of
Shaw and Pierre \citep{ShawPierre}, then extended to more general
settings by the same authors and collaborators (see, e.g., Vakakis
\citep{vakakis2001normal}, Kerschen et al. \citep{kerschen2009nonlinear},
Peeters et al. \citep{Peeters}, and Avramov and Mikhlin \citep{mikhlin2010nonlinears,avramov2013review}
for reviews). On closer inspection, one finds that such invariant
manifolds indeed exist under certain nonresonance conditions, but
are non-unique and may have a low order of differentiability (Neild
et al.\citep{Neild20140404}, Cirillo et al. \citep{Cirillo2015},
Haller and Ponsioen \citep{Haller2016}).

Following Haller and Ponsioen \citep{Haller2016}, we address this
uniqueness and smoothness issue with the help of the following definition:
\begin{defn}
A \emph{spectral submanifold} (SSM) $W(\mathcal{E})$ corresponding
to a spectral subspace $\mathcal{E}$ of the operator $\boldsymbol{A}$
is an invariant manifold of $\boldsymbol{F}$ with the following properties:
\begin{description}
\item [{(i)}] $W(\mathcal{E})$ is tangent to $\mathcal{E}$ at $\boldsymbol{y}=\boldsymbol{0}$
and has the same dimension as $\mathcal{E}$;
\item [{(ii)}] $W(\mathcal{E})$ is strictly smoother than any other invariant
manifold satisfying (i).
\end{description}
\end{defn}

If exists, an SSM serves as the unique nonlinear continuation of the
modal subspace $\mathcal{E}$ to the nonlinear system (\ref{eq:firstorder}).
By definition, all other invariant manifolds tangent to the same modal
subspace have only finitely many derivatives, and hence any high-enough
order Taylor expansion is only valid for the SSM.

Haller and Ponsioen \citep{Haller2016} has pointed out that the more
general and abstract results of Cabré et al. \citep{CabreLlave2003}
imply the existence of spectral submanifolds for $\boldsymbol{F}$
under appropriate conditions on the spectrum of $\boldsymbol{A}$.
Below we recall these results stated specifically in the context of
the sampling map $\boldsymbol{F}$. We note that by the conjugacy
relation (\ref{eq:conjugacy2}), the existence of a two-dimensional
SSM for the sampling map $\boldsymbol{F}$ is equivalent to the existence
of a two-dimensional SSM for the mechanical system (\ref{eq:firstorder}),
as long as the observable $\varphi$ is generically chosen. 

We start by considering a two-dimensional eigenspace $\mathcal{E}\subset\mathbb{C}^{2\nu}$
of the linearised sampling map $\boldsymbol{A}$, corresponding to
the eigenvalue pair $\mu_{\ell},\overline{\mu}_{\ell}$. We define
the \emph{relative spectral quotient $\sigma(\mathcal{E})$ }as the
positive integer\foreignlanguage{english}{ }
\begin{eqnarray}
\sigma(\mathcal{E}) & = & \mathrm{Int}\,\left[\frac{\underset{j\neq\ell,\ell+1}{\min}\log\left|\mu_{j}\right|}{\log\left|\mu_{\ell}\right|}\right]\in\mathbb{N}^{+}.\label{eq:relative_sigma}
\end{eqnarray}
For the linearised sampling map $\boldsymbol{A}$ (\ref{eq:linearized}),
the constant $\sigma(\mathcal{E})$ is the integer part of the ratio
of the strongest decay rate towards the spectral subspace \foreignlanguage{english}{$\mathcal{E}$}
to the decay rate along $\mathcal{E}$. This integer ratio turns out
to control the smoothness of the spectral submanifold $W\left(\mathcal{E}\right)$,
as we shall see shortly.

We assume now that
\begin{equation}
\sigma(\mathcal{E})\leq r,\label{eq:spectral condition}
\end{equation}
i.e., the degree of smoothness of the sampling map $\boldsymbol{F}$
is at least as high as the relative spectral quotient of the modal
subspace $\mathcal{E}$. Finally, we assume that no resonance relationships
between order $2$ and order $\sigma(\mathcal{E})$ hold between the
eigenvalues $\mu_{\ell},\overline{\mu}_{\ell}$ and the rest of the
spectrum of $\boldsymbol{A}$:

\begin{equation}
\mu_{\ell}^{s_{1}}\bar{\mu}_{\ell}^{s_{2}}\neq\mu_{j},\qquad\forall j\neq\ell,\ell+1,\qquad2\le s_{1}+s_{2}\le\sigma\left(\mathcal{E}\right).\label{eq:nonresonance}
\end{equation}
We then have the following existence and uniqueness result for spectral
submanifolds of the sampling map.
\begin{thm}
\label{thm: existence SSM}Assume that conditions (\ref{eq:spectral condition})-(\ref{eq:nonresonance})
are satisfied. Then the following statements hold:
\end{thm}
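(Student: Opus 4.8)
The plan is to obtain the asserted existence, smoothness and uniqueness properties of $W(\mathcal{E})$ from the abstract invariant-manifold theorem of Cabr\'{e}, Fontich and de la Llave \citep{CabreLlave2003}, applied to the $C^{r}$ sampling map $\boldsymbol{F}$ at its fixed point $\boldsymbol{\xi}=\boldsymbol{0}$, along the lines indicated by Haller and Ponsioen \citep{Haller2016}. I would first pass to the diagonalising coordinates $\boldsymbol{y}$ of (\ref{eq:DISCdiag}) and split $\mathbb{C}^{2\nu}=\mathcal{E}\oplus\mathcal{E}^{c}$, where $\mathcal{E}$ is the two-dimensional eigenspace of $\boldsymbol{A}$ spanned by the eigenvectors for $\mu_{\ell},\overline{\mu}_{\ell}$ and $\mathcal{E}^{c}$ is the $\boldsymbol{A}$-invariant complement. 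By (\ref{eq:assumption on eigenvalues of ODE}) the whole spectrum lies strictly inside the unit disc, so $\mathcal{E}$ is ``slow'' while $\mathcal{E}^{c}$ is strongly contracting relative to it; this strict spectral gap is the structural input the abstract theorem requires.

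I would then realise $W(\mathcal{E})$ by the parametrisation method: look for a $C^{r}$ map $\boldsymbol{W}\colon\mathcal{U}\subset\mathcal{E}\to\mathbb{C}^{2\nu}$ with $\boldsymbol{W}(\boldsymbol{0})=\boldsymbol{0}$ and $D\boldsymbol{W}(\boldsymbol{0})$ the inclusion $\mathcal{E}\hookrightarrow\mathbb{C}^{2\nu}$, together with a $C^{r}$ reduced map $\boldsymbol{R}\colon\mathcal{U}\to\mathcal{U}$ with $D\boldsymbol{R}(\boldsymbol{0})=\mathrm{diag}(\mu_{\ell},\overline{\mu}_{\ell})$, solving the invariance equation
\[
\boldsymbol{F}\circ\boldsymbol{W}=\boldsymbol{W}\circ\boldsymbol{R},
\]
so that $W(\mathcal{E})=\boldsymbol{W}(\mathcal{U})$ is the SSM and $\boldsymbol{R}$ its reduced dynamics. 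Expanding $\boldsymbol{W},\boldsymbol{R}$ in homogeneous polynomials of $(y_{\ell},\bar y_{\ell})$ and matching degree-$k$ terms gives, at each order $2\le k\le\sigma(\mathcal{E})$, a linear homological equation for the unknown pair $(\boldsymbol{W}_{k},\boldsymbol{R}_{k})$ whose right-hand side is built from lower orders; projecting it onto $\mathcal{E}$ and onto $\mathcal{E}^{c}$ decouples it. On $\mathcal{E}$ it is always solvable, with $\boldsymbol{R}_{k}$ absorbing the internally resonant monomials (those $\mu_{\ell}^{s_{1}}\bar\mu_{\ell}^{s_{2}}$ equal to $\mu_{\ell}$ or $\overline{\mu}_{\ell}$), which is exactly how a polynomial normal form for the reduced dynamics arises; on $\mathcal{E}^{c}$ it is uniquely solvable because each coefficient equation amounts to multiplication by $\mu_{j}-\mu_{\ell}^{s_{1}}\bar\mu_{\ell}^{s_{2}}$, invertible precisely by the non-resonance assumption (\ref{eq:nonresonance}). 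The hypothesis $\sigma(\mathcal{E})\le r$ in (\ref{eq:spectral condition}) is what makes this degree-$\sigma(\mathcal{E})$ polynomial approximant $(\boldsymbol{W}^{\le\sigma},\boldsymbol{R}^{\le\sigma})$ legitimate within the smoothness class of $\boldsymbol{F}$.

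The remaining, and main, step is to promote the polynomial approximant to a genuine invariant manifold of the full regularity of $\boldsymbol{F}$. Writing $\boldsymbol{W}=\boldsymbol{W}^{\le\sigma}+\boldsymbol{\Delta}$ with $\boldsymbol{\Delta}=O(|\boldsymbol{y}|^{\sigma(\mathcal{E})+1})$ and substituting into the invariance equation turns it into a fixed-point equation for $\boldsymbol{\Delta}$ on a Banach space of sufficiently flat $C^{r}$ maps; the contraction is governed by the spectral gap between $\mathcal{E}$ and $\mathcal{E}^{c}$, and $\sigma(\mathcal{E})$ --- the integer part of the spectral quotient (\ref{eq:relative_sigma}) --- is precisely the number of polynomial orders that must be removed before the transverse contraction dominates, so that the Cabr\'{e}--Fontich--de la Llave scheme applies. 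A fibre-contraction argument then upgrades the resulting fixed point from $C^{0}$ to $C^{r}$, and the same estimates give $C^{\infty}$ or analyticity when $\boldsymbol{F}$ is $C^{\infty}$ or analytic. Uniqueness of the smoothest such manifold follows from the first two steps: any invariant manifold tangent to $\mathcal{E}$ of class at least $C^{\sigma(\mathcal{E})+1}$ must share the jet $\boldsymbol{W}^{\le\sigma}$ by unique solvability of the homological equations, and hence coincide with $W(\mathcal{E})$ by the contraction, so $W(\mathcal{E})$ is strictly smoother than any other invariant manifold tangent to $\mathcal{E}$, which is property (ii) of the SSM definition.

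The step I expect to be the real obstacle is not the formal computation but this functional-analytic completion: one must check that the precise spectral-gap and bunching hypotheses of \citep{CabreLlave2003} translate correctly to the finite-dimensional map $\boldsymbol{F}$, choose norms adapted to the splitting $\mathcal{E}\oplus\mathcal{E}^{c}$ on which the relevant operators are genuine contractions, verify that the resonant terms retained in $\boldsymbol{R}$ do not spoil the fixed-point iteration, and ensure that $\boldsymbol{W}$ remains an embedding (not merely an immersion) on a small enough $\mathcal{U}$, so that $\boldsymbol{W}(\mathcal{U})$ is a manifold. Here one is helped by $\boldsymbol{A}$ being diagonalisable, the spectrum being finite, and the equilibrium asymptotically stable, so that all the gaps needed are strict and of the form the theorem requires.
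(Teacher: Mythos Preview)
Your approach is essentially the same as the paper's: both obtain statements (i)--(v) by appealing to Theorem~1.1 of Cabr\'e, Fontich and de la Llave \citep{CabreLlave2003}. The paper simply cites that result (via Haller and Ponsioen \citep{Haller2016}), whereas you sketch its internal mechanism --- the parametrisation method, the homological equations rendered solvable by the nonresonance condition (\ref{eq:nonresonance}), and the contraction argument for the remainder beyond order $\sigma(\mathcal{E})$. That is fine and arguably more informative than the paper's one-line citation.

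Two points, however, need attention. First, you omit statement (vi) entirely. The paper treats it separately, invoking the strengthened Takens embedding theorem of Huke \citep{huke2006embedding} and Stark \citep{Stark1999} recalled in Section~\ref{sec:Delay-embedding}: for a generic observable $\varphi$ the map $\boldsymbol{\Phi}$ is a diffeomorphism onto its image, so the conjugacy (\ref{eq:conjugacy2}) transports the SSM $W(\mathcal{E})$ of $\boldsymbol{F}$ back to an SSM $W(E)$ of the flow map $\boldsymbol{\varXi}_{T}$, inheriting properties (i)--(v). Your proposal never makes this transfer.

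Second, your assertion that ``$\mathcal{E}$ is `slow' while $\mathcal{E}^{c}$ is strongly contracting relative to it'' is not correct as stated. Theorem~\ref{thm: existence SSM} is formulated for an \emph{arbitrary} two-dimensional eigenspace, and the paper's own examples include $\sigma(\mathcal{E})=0$, i.e., $\mathcal{E}$ is the \emph{fastest} direction and $\mathcal{E}^{c}$ decays more slowly. The result of \citep{CabreLlave2003} still applies in that regime, but the geometric picture you give --- $\sigma(\mathcal{E})$ as the number of orders one must strip off before transverse contraction dominates --- is the slow-SSM picture only. For fast or intermediate eigenspaces the argument is organised around nonresonance rather than a rate gap in the direction you describe, so this part of your outline would need to be rephrased to match the generality of the statement.
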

\begin{description}
\item [{(i)}] There exists an SSM, $W\left(\mathcal{E}\right),$ for the
nonlinear sampling map $\boldsymbol{F}$, that is tangent to the invariant
subspace $\mathcal{E}$ at the $\boldsymbol{\xi}=\boldsymbol{0}$
fixed point. 
\item [{(ii)}] The invariant manifold $W\left(\mathcal{E}\right)$ is class
$C^{r}$ smooth and unique among all two-dimensional, class $C^{\sigma\left(\mathcal{E}\right)+1}$
invariant manifolds of $\boldsymbol{F}$ that are tangent to $\mathcal{E}$
at $\boldsymbol{\xi}=\boldsymbol{0}$. 
\item [{(iii)}] The SSM $W\left(\mathcal{E}\right)$ can be viewed as a
$C^{r}$ immersion of an open set $\mathcal{U}\subset\mathbb{C}^{2}$
into the phase space $\mathbb{C}^{2\nu}$ of $\boldsymbol{F}$ via
a map 
\begin{eqnarray}
\boldsymbol{W}:\mathcal{U}\subset\mathbb{C}^{2} & \to & \mathbb{C}^{2\nu},\qquad\boldsymbol{W}\left(\mathcal{U}\right)=W\left(\mathcal{E}\right).\label{eq:W embedding}
\end{eqnarray}
\item [{(iv)}] There exists a $C^{r}$ polynomial map $\boldsymbol{R}\colon\mathcal{U}\to\mathcal{U}$
such that 
\begin{equation}
\boldsymbol{F}\circ\boldsymbol{W}=\boldsymbol{W}\circ\boldsymbol{R},\label{eq:invariancecond0}
\end{equation}
 i.e., the dynamics on the SSM, expressed in the coordinates $\boldsymbol{z}=(z_{\ell},\bar{z}_{\ell})\in\mathcal{U},$
is given by the polynomial mapping $\boldsymbol{R}$. This polynomial
mapping only has terms up to order $\mathcal{O}\left(\left|\boldsymbol{z}\right|^{\sigma\left(\mathcal{E}\right)}\right)$.
\item [{(v)}] If, for some integer $j_{0}\geq2$, all internal non-resonance
conditions 
\begin{equation}
\mu_{\ell}^{s_{1}}\bar{\mu}_{\ell}^{s_{2}}\neq\mu_{\ell},\mu_{\ell+1},\qquad j_{0}\le s_{1}+s_{2}\le\sigma\left(\mathcal{E}\right)\label{eq:strict nonresonance}
\end{equation}
hold\emph{ within} $\mathcal{E}$, then the polynomial $\boldsymbol{R}$
in (\ref{eq:invariancecond0}) can be selected to contain only terms
up to order $j_{0}-1.$
\item [{(vi)}] If the observable $\varphi$ used in the construction of
the sampling map $\boldsymbol{F}$ is generic, then a two-dimensional
SSM, $W\left(E\right),$ tangent to the subspace $E$ at $\boldsymbol{x}=\boldsymbol{0}$
exists for the original system (\ref{eq:firstorder}). The invariant
manifold $W\left(E\right)$ shares the properties (i)-(v) of $W\left(\mathcal{E}\right)$
due to the conjugacy relationship (\ref{eq:conjugacy2}).
\end{description}
\begin{proof}
As explained in detail by Haller and Ponsioen \citep{Haller2016},
the proofs of statements (i)-(v) follow from a direct application
of the more general Theorem 1.1 of Cabré et al. \citep{CabreLlave2003}
on invariant manifolds tangent to spectral subspaces of arbitrary
dimension, for mappings defined on Banach spaces. Statement (vi) can
be concluded by invoking the strengthened version of Taken's theorem
(Huke \citep{huke2006embedding}, Stark \citep{Stark1999}) that we
recalled in section \ref{sec:Delay-embedding}, then interpreting
the resulting structures of the Poincaré map $\boldsymbol{\varXi}_{T}$
for the flow map of (\ref{eq:firstorder}).
\end{proof}

\begin{rem}
\label{rem: case of proportional damping}If the linearisation of
the original mechanical system (\ref{eq:starting point}) satisfies
the classic proportional damping hypothesis, we can use (\ref{eq:mu_j for proportional damping})
to rewrite the relative spectral quotient defined in (\ref{eq:relative_sigma})
as 
\begin{equation}
\sigma(\mathcal{E})=\mathrm{Int}\,\left[\frac{\underset{j\neq\ell,\ell+1}{\max}\zeta_{j}\omega_{j}}{\zeta_{\ell}\omega_{\ell}}\right].\label{eq:sigma from parameters}
\end{equation}
 In this case, the expressions in the external nonresonance conditions
(\ref{eq:nonresonance}) for $\mathcal{E}$ take the specific form
\[
\left(\begin{array}{r}
\left(s_{1}+s_{2}\right)\zeta_{\ell}\omega_{\ell}\\
\left(s_{1}-s_{2}\right)\omega_{\ell}\sqrt{1-\zeta_{\ell}^{2}}\;\mod\;\nicefrac{2\pi}{T}
\end{array}\right)\neq\left(\begin{array}{l}
\zeta_{j}\omega_{j}\\
\omega_{nj}\sqrt{1-\zeta_{j}^{2}}\;\mod\;\nicefrac{2\pi}{T}
\end{array}\right),\qquad2\le s_{1}+s_{2}\le\sigma\left(\mathcal{E}\right),
\]
where $\mod$ denotes the modulo operation that takes sampling into
account. In the limit of zero damping, an external resonance for $\mathcal{E}$
means that a frequency $\omega_{j}$ outside $\mathcal{E}$ is an
integer multiple of the frequency $\omega_{\ell}$ inside $\mathcal{E}$. 
\end{rem}

Statements (iii)-(v) of Theorem \ref{thm: existence SSM} imply that,
unlike in the Shaw-Pierre \citep{ShawPierre} construction, the SSM
inferred from the results of Cabré et al. \citep{CabreLlave2003}
is not assumed to be a graph over the subspace $\mathcal{E}$ in the
phase space of $\boldsymbol{F}$. This allows $W\left(\mathcal{E}\right)$
to be constructed on larger domains on which it can produce folds
over $\mathcal{E}$. This parametrisation approach to SSM construction
was also re-discovered recently by Cirillo et al. \textbf{\citep{Cirillo2015}
}under the assumption that the flow is analytically linearisable near
the fixed point $\boldsymbol{x}=\boldsymbol{0}$. Analytic linearisation
does not allow for any resonance in the spectrum of $\boldsymbol{A}$
and, in return, transforms the full dynamics of the mapping $\boldsymbol{F}$
into that of $\boldsymbol{\Lambda y}$. In the case of a near-resonance
\textendash{} which arises for all weakly underdamped modes, as we
shall see below \textendash{} analytic linearisation can therefore
only be constructed on a very small domain near the fixed point. This
disallows the type of direct identification of nonlinear terms that
we discuss next.

\section{Dynamics on SSMs: Backbone curves\label{sec:backbone curves}}

Since $\left|\mu_{\ell}\right|<1$ holds by assumption (\ref{eq:assumption on eigenvalues of ODE}),
we find that, strictly speaking, the internal non-resonance condition
(\ref{eq:strict nonresonance}) is always satisfied for nonzero damping.

As seen in the construct of Cabré et al. \citep{CabreLlave2003},
however, even an approximate resonance $\mu_{\ell}^{s_{1}}\bar{\mu}_{\ell}^{s_{2}}\approx\mu_{j}$
causes the near-identity transformation $(\xi_{\ell},\bar{\xi}_{\ell})\mapsto(z_{\ell},\bar{z}_{\ell})$
to have small denominators, limiting the existence of this transformation
to a tiny neighbourhood of the $\boldsymbol{\xi}=\boldsymbol{0}$
fixed point. Since our interest here is to obtain an approximation
of the dynamics of $\boldsymbol{F}$ on a sizeable neighbourhood of
the fixed point within the SSM, we do not insist on the removal of
approximately resonant terms in the $(z_{\ell},\bar{z}_{\ell})$ coordinate
system. Rather, we observe that for small damping ratios (i.e., for
$\left|\mu_{\ell}\right|\approx1)$, the low-order near-resonance
relationships 
\begin{equation}
\mu_{\ell}^{2}\bar{\mu}_{\ell}\approx\mu_{\ell},\quad\mu_{\ell}\bar{\mu}_{\ell}^{2}\approx\bar{\mu}_{\ell}\label{eq:near resonance}
\end{equation}
are always satisfied, and hence the minimal possible integer $j_{0}$
satisfying (\ref{eq:strict nonresonance}) (with $\neq$ replaced
with $\not\approx$) is $j_{0}=1$. Accordingly, the approximately
failing resonance conditions in (\ref{eq:strict nonresonance}) prompt
us to seek $\boldsymbol{R}$ (cf. statement (iv) of Theorem \ref{thm: existence SSM})
as a cubic polynomial of the form 
\begin{align}
\boldsymbol{R}(\boldsymbol{z}) & =\begin{pmatrix}\begin{array}{l}
\mu_{\ell}z_{\ell}+\beta_{\ell}z_{\ell}^{2}\bar{z}_{\ell}+\ldots\\
\bar{\mu}_{\ell}\bar{z}_{\ell}+\bar{\beta_{\ell}}z_{\ell}\bar{z}_{\ell}^{2}+\ldots
\end{array}\end{pmatrix}.\label{eq:1DOFred}
\end{align}

Introducing polar coordinates $z=\rho e^{i\theta}$, we can further
transform (\ref{eq:1DOFred}) to the real amplitude-phase components
\begin{eqnarray}
\rho_{\ell} & \mapsto & \rho_{\ell}\left|\mu_{\ell}+\beta_{\ell}\rho_{\ell}^{2}\right|,\label{eq:RadAmpl}\\
\theta_{\ell} & \mapsto & \theta+\arg\left(\mu_{\ell}+\beta_{\ell}\rho_{\ell}^{2}\right).\label{eq:RadAngle}
\end{eqnarray}
Equation (\ref{eq:RadAngle}) then provides an instantaneous frequency
of nonlinear oscillations, with the instantaneous oscillation amplitude
governed by equation (\ref{eq:RadAmpl}). Given that the sampling
period we have used is $T$, the leading-order approximation of the
instantaneous oscillation frequency in the original nonlinear system
(\ref{eq:firstorder}) is 
\begin{equation}
\omega(\rho_{\ell})=\frac{\arg\left(\mu_{\ell}+\beta_{\ell}\rho_{\ell}^{2}\right)}{T}.\label{eq:omegaOFrho}
\end{equation}

We take the instantaneous leading-order amplitude of the corresponding
trajectories of (\ref{eq:firstorder}) to be the norm of $\boldsymbol{z}(\rho_{\ell},\theta_{\ell})=\left(\rho_{\ell}e^{i\theta_{\ell}},\rho_{\ell}e^{-i\theta_{\ell}}\right)$
in the original $\boldsymbol{\xi}$ coordinates. A nominal instantaneous
amplitude $\mathrm{Amp}(\rho)$ of the vibration can then be calculated
from (\ref{eq:y definition}) as the $L_{2}$ norm of the norm of
$\boldsymbol{\boldsymbol{z}}(\rho_{\ell},\theta_{\ell})$ in the original
$\boldsymbol{\xi}$ coordinates: 
\begin{equation}
\mathrm{Amp}(\rho_{\ell})=\sqrt{\frac{1}{2\pi}\intop_{0}^{2\pi}\left|\boldsymbol{V}\boldsymbol{\,W}(\boldsymbol{z}(\rho_{\ell},\theta_{\ell}))\right|^{2}\,d\theta.}\label{eq:closed curve}
\end{equation}
Here, the linear map $\boldsymbol{V}$ is the one appearing in (\ref{eq:y definition}),
and the mapping $\boldsymbol{W}$ in the one appearing in (\ref{eq:W embedding}). 
\begin{defn}
\label{def:backbone}We call the parametrised curve
\begin{equation}
\mathcal{B}_{\ell}=\left\{ \omega(\rho_{\ell}),\mathrm{Amp}(\rho_{\ell})\right\} _{\rho_{\ell}\in\mathbb{R}^{+}}\subset\mathbb{R}^{2}\label{eq:BackGraph}
\end{equation}
the \emph{backbone curve }associated with the nonlinear dynamics on
the SSM, $W(\mathcal{\mathcal{E}})$. 
\end{defn}
The key to the computation of the backbone curve (\ref{eq:BackGraph})
is, therefore, the computation of the single complex coefficient $\beta_{\ell}$
and of the mapping $\boldsymbol{W}(\boldsymbol{z})$. This is because
both the eigenvalue $\mu_{\ell}$ and the sampling time $T$ are already
assumed to be known. 
\begin{rem}
\label{rem: observed backbone}It is often desirable to translate
the $\varphi$-based backbone curve $\mathcal{B}_{\ell}$ defined
in (\ref{eq:BackGraph}) to a backbone curve observed directly for
a given mechanical coordinate $q_{j}$. When the observable is an
invertible function of such a $q_{j}$, that is, $\varphi(\boldsymbol{q},\dot{\boldsymbol{q}})=\varphi(q_{j})$,
we can use the inverse, defined by $q_{j}=\mathcal{P}(\mathbf{\boldsymbol{\xi}})=\mathcal{P}(\varphi(q_{j}))$.
Also notice that by the definition of the observable space, coordinates
of $\boldsymbol{\xi}$ are just sampled values of the same observed
quantity. Therefore, when calculating an amplitude, it is reasonable
to consider just a single component of $\boldsymbol{\xi}$, for example,
$\xi_{1}$. With this in mind, we consider $\mathcal{P}$ functions
in the particular form $\mathcal{P}(\boldsymbol{\xi})=\mathcal{P}(\xi_{1})$.
As a result, the observed amplitude in the $q_{j}$ mechanical coordinate
can be computed as 
\[
\mathrm{Amp}(\rho_{\ell})=\sqrt{\frac{1}{2\pi}\intop_{0}^{2\pi}\left|\mathcal{P}\left(\boldsymbol{V}\boldsymbol{\,W}(\boldsymbol{z}(\rho_{\ell},\theta_{\ell}))\right)\right|^{2}\,d\theta}.
\]
\end{rem}
To compute the complex parameter $\beta$ in equation (\ref{eq:omegaOFrho}),
we need to solve (\ref{eq:invariancecond0}). To this end, we seek
the Taylor series coefficients of the $j^{th}$ coordinate functions,
$W_{j}(\boldsymbol{z})\in\mathbb{C}$, $j=1,\ldots,2\nu,$ of the
mapping $\boldsymbol{W}(\boldsymbol{z})$ up to third order. Similarly,
we week the third-order Taylor coefficient $\beta_{\ell}\in\mathbb{C}$
of the polynomial mapping $\boldsymbol{R}(\boldsymbol{z})$ defined
in (\ref{eq:1DOFred}). All these unknowns should be expressed in
the end as functions of the $j^{th}$ coordinate functions $G_{j}(\boldsymbol{y})\in\mathbb{C},$
$j=1,\ldots,2\nu,$ of the nonlinear part $\boldsymbol{G}(\boldsymbol{y})$
of the transformed sampling map $\boldsymbol{F}$. The relevant Taylor
expansions are in the general form 
\begin{align}
G_{j}(\boldsymbol{y}) & =\sum_{\left|\boldsymbol{m}\right|\geq2}g_{j}^{\boldsymbol{m}}\boldsymbol{y}^{\boldsymbol{m}},\;\boldsymbol{m}\in\mathbb{N}^{2\nu},\quad g_{j}^{\boldsymbol{m}}\in\mathbb{C},\qquad j=1,\ldots,2\nu,\label{eq:G expand}\\
W_{j}(\boldsymbol{z}) & =\sum_{\left|\boldsymbol{s}\right|\geq1}w_{j}^{\boldsymbol{s}}\boldsymbol{z}^{\boldsymbol{s}},\;\boldsymbol{s}\in\mathbb{N}^{2},\quad\quad w_{j}^{\boldsymbol{s}}\in\mathbb{C},\qquad j=1,\ldots,2\nu.\label{eq:W expand}
\end{align}

In expressing the solutions of (\ref{eq:invariancecond0}) in terms
of these coefficients, we will use the short-hand notation $(p@j)$
for an integer multi-index whose elements are zero, except for the
one at the $j^{th}$ position, which is equal to $p$:
\[
(p@j):=\left(0,\ldots,\underset{j-1}{0},\underset{j}{p},\underset{j+1}{0},\ldots,0\right)\in\mathbb{N}^{2\nu}.
\]
We will also concatenate this notation to refer to multi-indices whose
entries are zero except at prescribed locations:
\[
(p@j_{1},q@j_{2}):=\left(\underset{}{0},\ldots,\underset{j_{1}-1}{0},\underset{j_{1}}{p},\underset{j_{1}+1}{0},\underset{\cdots}{\ldots},\underset{j_{2}-1}{0},\underset{j_{2}}{q},\underset{j_{2}+1}{0},\underset{\cdots}{\ldots},\underset{}{0}\right)\in\mathbb{N}^{2\nu}.
\]
For $j_{1}\equiv j_{2}=j,$ we let
\[
(p@j,q@j):=(\left(p+q\right)@j)=\left(\ldots,\underset{j-1}{0},\underset{j}{p+q},\underset{j+1}{0},\underset{\cdots}{\ldots}\right)\in\mathbb{N}^{2\nu}.
\]

With all this notation, we obtain the following result:
\begin{thm}
\label{thm:coefficients} Suppose that the assumptions of Theorem
\ref{thm: existence SSM} hold but with the strengthened version 
\begin{equation}
\mu_{\ell}^{s_{1}}\bar{\mu}_{\ell}^{s_{2}}\not\approx\mu_{j},\qquad\forall j\neq\ell,\ell+1,\qquad1\le s_{1}+s_{2}\le\sigma\left(\mathcal{E}\right)\label{eq:nonresonance-2}
\end{equation}
of the external non-resonance condition (\ref{eq:nonresonance}).
Then, for any $j\in[1,2\nu]$, the $j^{th}$ coordinate function $W_{j}$
of the mapping $\boldsymbol{W}$ and the cubic Taylor coefficient
$\beta_{\ell}$ of the conjugate map $\boldsymbol{R}$ are given by
the following formulas:
\[
w_{j}^{(1,0)}=\delta_{j\ell},\qquad w_{j}^{(0,1)}=\delta_{j(\ell+1)},
\]
\[
w_{j}^{(2,0)}=\frac{g_{j}^{\left(2@\ell\right)}}{\mu_{\ell}^{2}-\mu_{j}},\qquad w_{j}^{(1,1)}=\frac{g_{j}^{\left(1@\ell,1@\left(\ell+1\right)\right)}}{\mu_{\ell}\overline{\mu}_{\ell}-\mu_{j}},\qquad w_{j}^{(0,2)}=\frac{g_{j}^{\left(2@\left(\ell+1\right)\right)}}{\overline{\mu}_{\ell}^{2}-\mu_{j}},
\]
\[
w_{j}^{(3,0)}=\frac{\sum_{q=1}^{2\nu}\left(1+\delta_{\ell q}\right)g_{j}^{(1@\ell,1@q)}w_{q}^{(2,0)}+g_{j}^{(3@\ell)}}{\mu_{\ell}^{3}-\mu_{j}},\qquad w_{j}^{(0,3)}=\frac{\sum_{q=1}^{2\nu}\left(1+\delta_{\left(\ell+1\right)q}\right)g_{j}^{(1@(\ell+1),1@q)}w_{q}^{(0,2)}+g_{j}^{(3@\left(\ell+1\right))}}{\bar{\mu}_{\ell}^{3}-\mu_{j}}.
\]
 
\[
w_{j}^{(2,1)}=\left(1-\delta_{j\ell}\right)\frac{\sum_{q=1}^{2\nu}\left[\left(1+\delta_{\ell q}\right)g_{j}^{(1@\ell,1@q)}w_{q}^{(1,1)}+\left(1+\delta_{\left(\ell+1\right)q}\right)g_{j}^{(1@(\ell+1),1@q)}w_{q}^{(2,0)}\right]+g_{j}^{(2@\ell,1@(\ell+1))}}{\mu_{\ell}^{2}\bar{\mu}_{\ell}-\mu_{j}},
\]
\[
w_{j}^{(1,2)}=\left(1-\delta_{j(\ell+1)}\right)\frac{\sum_{q=1}^{2\nu}\left[\left(1+\delta_{\ell q}\right)g_{j}^{(1@\ell,1@q)}w_{q}^{(0,2)}+\left(1+\delta_{\left(\ell+1\right)q}\right)g_{j}^{(1@(\ell+1),1@q)}w_{q}^{(1,1)}\right]+g_{j}^{(2@(\ell+1),1@\ell)}}{\mu_{\ell}\bar{\mu}_{\ell}^{2}-\mu_{j}},
\]
 
\[
\beta_{\ell}=\sum_{q=1}^{2\nu}\left[\left(1+\delta_{\ell q}\right)g_{\ell}^{(1@\ell,1@q)}w_{q}^{(1,1)}+\left(1+\delta_{\left(\ell+1\right)q}\right)g_{\ell}^{(1@(\ell+1),1@q)}w_{q}^{(2,0)}\right]+g_{\ell}^{(2@\ell,1@(\ell+1))}.
\]
\end{thm}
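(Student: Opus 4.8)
The plan is to solve the invariance equation \eqref{eq:invariancecond0} order by order in $\boldsymbol{z}$, matching Taylor coefficients on both sides. Writing $\boldsymbol{F}$ in the diagonalised $\boldsymbol{y}$-coordinates as $\boldsymbol{y}\mapsto\boldsymbol{\Lambda}\boldsymbol{y}+\boldsymbol{G}(\boldsymbol{y})$, and recalling that the embedding $\boldsymbol{W}$ and the reduced map $\boldsymbol{R}$ are sought with expansions \eqref{eq:W expand} and \eqref{eq:1DOFred}, the equation $\boldsymbol{F}\circ\boldsymbol{W}=\boldsymbol{W}\circ\boldsymbol{R}$ becomes, componentwise for $j=1,\ldots,2\nu$,
\[
\mu_{j}W_{j}(\boldsymbol{z})+G_{j}\!\left(\boldsymbol{W}(\boldsymbol{z})\right)=W_{j}\!\left(\boldsymbol{R}(\boldsymbol{z})\right).
\]
First I would record the linear order: tangency to $\mathcal{E}$ forces $w_{j}^{(1,0)}=\delta_{j\ell}$ and $w_{j}^{(0,1)}=\delta_{j(\ell+1)}$, which is exactly statement (i) of Theorem~\ref{thm: existence SSM} made explicit and fixes the linearisation of $\boldsymbol{R}$ to be $\mathrm{diag}(\mu_{\ell},\bar\mu_{\ell})$. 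With the linear part pinned down, the composition $G_{j}(\boldsymbol{W}(\boldsymbol{z}))$ at a given order in $\boldsymbol{z}$ depends only on lower-order $w$-coefficients, because $D\boldsymbol{G}(\boldsymbol{0})=\boldsymbol{0}$; likewise $W_{j}(\boldsymbol{R}(\boldsymbol{z}))$ splits into $\mu_{\ell}^{s_1}\bar\mu_{\ell}^{s_2}w_{j}^{\boldsymbol{s}}\boldsymbol{z}^{\boldsymbol{s}}$ plus contributions of the nonlinear part of $\boldsymbol{R}$ paired with lower-order $w$'s. This is the standard cohomological-equation structure of the parametrisation method.

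Next I would collect the quadratic terms $\boldsymbol{z}^{\boldsymbol{s}}$ with $|\boldsymbol{s}|=2$. Since $\boldsymbol{R}$ has no quadratic part (it only carries the cubic resonant monomial, per \eqref{eq:1DOFred}), the right-hand side at quadratic order is simply $\mu_{\ell}^{s_1}\bar\mu_{\ell}^{s_2}w_{j}^{\boldsymbol{s}}$, and the left-hand side contributes $\mu_j w_j^{\boldsymbol{s}}$ plus the quadratic part of $G_j$ evaluated on the linear part of $\boldsymbol{W}$, i.e. $g_j^{(2@\ell)}$, $g_j^{(1@\ell,1@(\ell+1))}$, $g_j^{(2@(\ell+1))}$ for $\boldsymbol{s}=(2,0),(1,1),(0,2)$ respectively. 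Solving the scalar equation $(\mu_{\ell}^{s_1}\bar\mu_{\ell}^{s_2}-\mu_j)w_j^{\boldsymbol{s}}=g_j^{\cdots}$ gives the three stated formulas; the denominators are nonzero precisely by the strengthened external non-resonance condition \eqref{eq:nonresonance-2} with $s_1+s_2=2$. Then I would repeat at cubic order for $\boldsymbol{s}=(3,0),(0,3),(2,1),(1,2)$. Here the new feature is that $G_j\circ\boldsymbol{W}$ produces cubic terms in two ways — a genuine cubic monomial of $G_j$ on the linear $\boldsymbol{W}$, and a quadratic monomial of $G_j$ with one slot taken from the quadratic part of $\boldsymbol{W}$ — which accounts for the $\sum_q(1+\delta_{\cdots q})g_j^{(1@\cdot,1@q)}w_q^{(\cdot,\cdot)}$ sums (the factor $1+\delta$ is the multinomial coefficient distinguishing $g_j^{(2@q)}$-type terms from mixed ones). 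For $\boldsymbol{s}=(2,1)$ and $(1,2)$ the right-hand side also receives a contribution from the cubic part of $\boldsymbol{R}$: the monomial $\beta_\ell z_\ell^2\bar z_\ell$ in $R_\ell$ composed with the linear $w_j^{(1,0)}=\delta_{j\ell}$ produces the term $\beta_\ell\delta_{j\ell}$ on the $(2,1)$ coefficient (and $\bar\beta_\ell\delta_{j(\ell+1)}$ on the $(1,2)$ coefficient). This is why the formulas for $w_j^{(2,1)}$ and $w_j^{(1,2)}$ carry the projector $(1-\delta_{j\ell})$ and $(1-\delta_{j(\ell+1)})$: for $j=\ell$ the coefficient $\mu_{\ell}^2\bar\mu_{\ell}-\mu_\ell$ would be the near-resonant small denominator \eqref{eq:near resonance}, so instead of solving for $w_\ell^{(2,1)}$ one sets it to zero and uses that same equation to solve for $\beta_\ell$, yielding the last displayed formula; and $\beta_\ell$ is defined consistently whether or not the resonance is exact, which is the whole point of keeping the cubic term in $\boldsymbol{R}$.

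I expect the bookkeeping of the cubic composition $G_j(\boldsymbol{W}(\boldsymbol{z}))$ to be the main obstacle — not conceptually, but in getting the combinatorial factors right. Concretely one must expand $\sum_{|\boldsymbol{m}|=2}g_j^{\boldsymbol{m}}\boldsymbol{W}(\boldsymbol{z})^{\boldsymbol{m}}$ and extract the coefficient of each cubic $\boldsymbol{z}$-monomial, keeping careful track of which $\boldsymbol{m}$ have a repeated index (contributing a factor $2$, encoded as $1+\delta_{\ell q}$ or $1+\delta_{(\ell+1)q}$) versus distinct indices, and which pairings of a quadratic $w_q^{(\cdot,\cdot)}$ with a linear slot reproduce the target exponent $\boldsymbol{s}$. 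One also has to note that only $m$-indices supported on $\{\ell,\ell+1\}$ in their ``linear'' slot survive, since $w_q^{(1,0)}$ and $w_q^{(0,1)}$ are nonzero only for $q=\ell$ and $q=\ell+1$; this is what collapses the double sum over $\boldsymbol{m}$ to the single sum over $q$ in the stated formulas. Once the combinatorics is pinned down, each equation is a single scalar linear equation in one unknown with an explicitly nonvanishing coefficient (by \eqref{eq:nonresonance-2}, or by the deliberate choice to place $\beta_\ell$ rather than $w_\ell^{(2,1)}$ in the near-resonant slot), so the formulas follow by inspection. Finally I would remark that higher-order terms of $\boldsymbol{W}$ and $\boldsymbol{R}$, which exist and are $C^r$ by Theorem~\ref{thm: existence SSM}(iii)--(iv), do not feed back into these cubic-and-lower coefficients, so the truncation used for the backbone curve \eqref{eq:BackGraph} is self-consistent.
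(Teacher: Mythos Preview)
Your proposal is correct and follows essentially the same route as the paper's proof: an order-by-order solution of the invariance equation $\boldsymbol{\Lambda}\boldsymbol{W}+\boldsymbol{G}\circ\boldsymbol{W}=\boldsymbol{W}\circ\boldsymbol{R}$, with the quadratic coefficients read off directly (since $\boldsymbol{R}$ has no quadratic part) and the cubic coefficients obtained after expanding the quadratic-$G$-on-quadratic-$W$ and cubic-$G$-on-linear-$W$ contributions, handling the near-resonant $(2,1)$ and $(1,2)$ slots by setting $w_{\ell}^{(2,1)}=w_{\ell+1}^{(1,2)}=0$ and solving for $\beta_{\ell}$ instead. The paper merely writes out the combinatorics you anticipate in full detail (the $A$, $B$, $C$ decomposition of the cubic $G$-terms and the explicit $y_py_q$ products), but the logic and the resulting formulas are identical.
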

\begin{proof}
See Appendix A.
\end{proof}

\begin{rem}
\label{rem:higher order} Theorem \ref{thm:coefficients} only provides
the solution of the homological equation (\ref{eq:invariancecond0})
up to cubic order. This equation, however, can be solved by symbolic
computations up to any order for the Taylor coefficients of the functions
$\boldsymbol{W}$ and $\boldsymbol{R}$. For instance, up to quintic
order, the near-resonance conditions (\ref{eq:near resonance}) imply
the general form 
\[
\boldsymbol{R}(\boldsymbol{z})=\begin{pmatrix}\begin{array}{l}
\mu_{\ell}z_{\ell}+\beta_{\ell}z_{\ell}^{2}\bar{z}_{\ell}+\gamma_{\ell}z_{\ell}^{3}\bar{z}_{\ell}^{2}+\ldots\\
\bar{\mu}_{\ell}\bar{z_{\ell}}+\bar{\beta_{\ell}}z_{\ell}\bar{z}_{\ell}^{2}+\bar{\gamma}_{\ell}z_{\ell}^{2}\bar{z}_{\ell}^{3}+\ldots
\end{array}\end{pmatrix}
\]
for the polynomial conjugate dynamics on the SSM $\mathcal{E}$. The
coefficient $\gamma_{\ell}$ as well as the quartic and quintic terms
of $\boldsymbol{W}$ can be found recursively from equation (\ref{eq:invariancecond0}),
following the procedure outlined in Appendix A. The sampling map restricted
to the SSM $W\left(\mathcal{E}\right)$ can be written in polar coordinates
up to quintic order as 
\begin{eqnarray}
\rho_{\ell} & \mapsto & \rho_{\ell}\left|\mu_{\ell}+\beta_{\ell}\rho_{\ell}^{2}+\gamma_{\ell}\rho_{\ell}^{4}\right|,\label{eq:RadAmpl-2}\\
\theta_{\ell} & \mapsto & \theta+\arg\left(\mu_{\ell}+\beta_{\ell}\rho_{\ell}^{2}+\gamma_{\ell}\rho_{\ell}^{4}\right),\label{eq:RadAngle-2}
\end{eqnarray}
yielding the instantaneous oscillation frequency in the original nonlinear
system (\ref{eq:firstorder}) as 
\begin{equation}
\omega(\rho_{\ell})=\frac{\arg\left(\mu_{\ell}+\beta_{\ell}\rho_{\ell}^{2}+\gamma_{\ell}\rho_{\ell}^{4}\right)}{T}.\label{eq:quintic freq}
\end{equation}
The formulas (\ref{eq:closed curve}) and (\ref{eq:quintic freq})
then give a refined, quintic approximation for the backbone curve
$\mathcal{B}_{\ell}$. The same procedure applies to further, higher-order
approximations of $\mathcal{B}_{\ell}$. 
\end{rem}

\begin{rem}
\label{rem: coeffs for strong resonance}The external nonresonance
condition (\ref{eq:nonresonance}) of Theorem \ref{thm:coefficients}
only excludes quadratic and higher-order resonances. As a result,
for overdamped spectral submanifolds $\mathcal{E}$ with eigenvalues
$\mu_{\ell},\mu_{\ell+1}\in\mathbb{R}$, condition (\ref{eq:nonresonance})
would still technically allow for a $1:1$ external resonance (characterised
by $s_{1}=1$ and $s_{2}=0$ ) with an eigenvalue $\mu_{j}\in\mathbb{R}$
outside $\mathcal{E}$ . In our setting, however, the damping is assumed
weak and hence an approximate $1:1$ external resonance $\mu_{\ell}\approx\mu_{j}$
implies an approximate external $2:1$ resonance $\mu_{\ell}^{2}\mu_{\ell+1}^{1}\approx\mu_{j}$,
resulting in small denominators for $w_{j}^{(1,2)}$ and $w_{j}^{(2,1)}$
in the statement of Theorem \ref{thm:coefficients}. The strengthened
nonresonance condition (\ref{eq:nonresonance-2}) serves to exclude
this case, as well as other cases of near-resonance that create nonzero
but small denominators for the coefficients in Theorem \ref{thm:coefficients}.
Although technically nonzero, such small denominators are undesirable
as they may significantly decrease the phase space domain on which
the formulas of the theorem give a good approximation for the underlying
SSM and its reduced dynamics. 
\end{rem}

\section{Reconstruction of the sampling map from data \label{subsec:ModelID}}

In an experimental setting, backbone-curve identification via Theorem
\ref{thm:coefficients} requires the fitting of a model of $\boldsymbol{F}$
to observations using an appropriate set of basis functions. Due to
the polynomial form (\ref{eq:expansion of F}) of $\boldsymbol{F}$,
the required basis functions are precisely vector-valued monomials
of the variables $\xi_{1},\ldots,\xi_{2\nu}$ not including constant
terms. The lack of constant terms follows from the assumption (\ref{eq:F(0)=00003D0}),
which can always be satisfied by an appropriate shift of coordinates,
if necessary.

For the polynomial-based model-identification for $\boldsymbol{F}$,
we employ a nonlinear autoregressive model (NAR) (Billings \citep{billings2013}).
We order all integer vectors $\boldsymbol{m}$ up to order $\left|\boldsymbol{m}\right|=r$
(i.e., all index vectors in the leading-order Taylor expansion (\ref{eq:expansion of F}))
into a series $\left\{ \boldsymbol{m}^{l}\right\} $ so that 
\[
\boldsymbol{m}^{v}\prec\boldsymbol{m}^{w}\quad\Longleftrightarrow\quad m_{j}^{v}\leq m_{j}^{w},\quad j=1,\ldots,2\nu.
\]
We can then write the yet unknown, $r^{th}$-order Taylor expansion
of $\boldsymbol{F}$ in the compact form 
\begin{equation}
\boldsymbol{F}(\boldsymbol{\xi})=\boldsymbol{K}\boldsymbol{\psi}(\boldsymbol{\xi})+\boldsymbol{r}(\boldsymbol{\xi}),\qquad\psi_{l}(\boldsymbol{\xi})=\boldsymbol{\xi}^{\boldsymbol{m}^{l}},\label{eq:FitMap}
\end{equation}
where $\boldsymbol{K}\in\mbox{\ensuremath{\mathbb{R}}}^{2\nu\times N}$
is a rectangular matrix, to be determined by minimising the residual
term $\boldsymbol{r}(\boldsymbol{\xi})\in\mbox{\ensuremath{\mathbb{R}}}^{2\nu}$
on assimilated data in the $\ell^{2}$ norm.

The input data to be assimilated into the NAR model consists of $P$
sequences of $M_{p}>2\nu$-long observations, $\left\{ \boldsymbol{\xi}_{k}^{p}\right\} _{k=0}^{M_{p}-2\nu}$
, $p=1,\ldots,P,$ with each observation sequence $\left\{ \boldsymbol{\xi}_{k}^{p}\right\} _{k=0}^{M_{p}-2\nu}$
defined as in (\ref{eq:defxi}). The $\ell^{2}$ norm of $\boldsymbol{r}(\boldsymbol{\xi})$
on $\left\{ \boldsymbol{\xi}_{k}^{p}\right\} _{k=0}^{M_{p}-2\nu}$
over all $P$ observation sequences is then given by 
\begin{align*}
Err=\sum_{p=1}^{P}\sum_{k=0}^{M_{p}-2\nu}\left|\boldsymbol{r}\left(\boldsymbol{\xi}_{k}^{p}\right)\right|^{2} & =\sum_{p=1}^{P}\sum_{k=0}^{M_{p}-2\nu}\left|\boldsymbol{K}\boldsymbol{\psi}(\boldsymbol{\xi}_{k}^{p})-\boldsymbol{\xi}_{k+1}^{p}\right|^{2}.
\end{align*}
The matrix $\boldsymbol{K}$ that minimises this norm is obtained
by solving the equation $\mathrm{d}Err/\mathrm{d}\boldsymbol{K}=\boldsymbol{0}$
for $\boldsymbol{K}$. This classic computation yields $\boldsymbol{K}=\boldsymbol{Q}\boldsymbol{P}^{-1}$,
where
\begin{align*}
\boldsymbol{P} & =\sum_{p=1}^{P}M_{p}^{-1}\sum_{k=0}^{M_{p}-2\nu}\boldsymbol{\psi}(\boldsymbol{\xi}_{k}^{p})\boldsymbol{\psi}^{\star}(\boldsymbol{\xi}_{k}^{p}),\\
\boldsymbol{Q} & =\sum_{p=1}^{P}M_{p}^{-1}\sum_{k=0}^{M_{p}-2\nu}\boldsymbol{\xi}_{k+1}^{p}\boldsymbol{\psi}^{\star}(\boldsymbol{\xi}_{k}^{p}),
\end{align*}
with $\star$ denoting the transposition. With this notation, the
reconstructed nonlinear sampling map is 
\begin{equation}
\boldsymbol{\tilde{F}}(\boldsymbol{\xi})=\boldsymbol{\boldsymbol{Q}\boldsymbol{P}^{-1}}\boldsymbol{\psi}(\boldsymbol{\xi}),\label{eq:Ftilde}
\end{equation}
which we will use instead of the exact sampling map $\boldsymbol{F}$
in our analysis. 

Assimilating multiple measurement sequences (i.e., using $P>1$) generally
reduces the effect of zero-mean additive noise on the model reconstruction.
More importantly, using measurements from vibrations decaying near
$P$ natural frequencies of interest allows us to build a single reduced-order
discrete model map $\boldsymbol{\tilde{F}}$ that simultaneously captures
nonlinear behaviour near all these natural frequencies. The choice
of the $\ell^{2}$ optimisation above was mostly dictated by convenience;
in some situations, minimisation of $\boldsymbol{r}(\boldsymbol{\xi})$
in the $\ell^{1}$ or $\ell^{\infty}$ norms might be more beneficial. 

Since we do not know the invariant manifold $W\left(\mathcal{E}\right)$
exactly, we will construct (\ref{eq:Ftilde}) from observed nonlinear
vibration decay measurements initiated along two-dimensional modal
subspaces of $D\boldsymbol{\varXi}_{T}(\boldsymbol{0})$. In practice,
these subspaces can be approximated from linear modal analysis.

\section{Summary of SMM-based backbone-curve identification algorithm\label{sec:Summary-of-algortihm}}

We now briefly summarise the steps in the approach we have developed
in the preceding sections:
\begin{description}
\item [{1.}] Fix a generic scalar observable $\varphi(\boldsymbol{q},\dot{\boldsymbol{q}})$
and a sampling time $T>0$ for the mechanical system (\ref{eq:starting point}).
Also fix an integer $\nu\geq3$ as the number of SSMs to be identified
for system (\ref{eq:starting point}). Finally, select an integer
$r=\max\left|\boldsymbol{m}\right|$ for the maximum degree of the
polynomials used in the construction of the NAR model (\ref{eq:FitMap})
for the sampling map $\boldsymbol{\tilde{F}}(\boldsymbol{\xi})$ with
$\boldsymbol{\xi}\in\mathbb{R}^{2\nu}$. 
\item [{2.}] Collect $P$ sequences of $M_{p}$-long observations, $\left\{ \boldsymbol{\xi}_{k}^{p}\right\} _{k=0}^{M_{p}-2\nu}$,
by letting
\begin{eqnarray*}
\boldsymbol{\xi}_{k}^{p} & = & \left(\varphi\left(\boldsymbol{q}\left(kT\right),\dot{\boldsymbol{q}}\left(kT\right)\right),\ldots,\varphi\left(\boldsymbol{q}\left((k+2\nu-1)T\right),\dot{\boldsymbol{q}}\left((k+2\nu-1)T\right)\right)\right),\\
 &  & p=1,\ldots,P,\qquad k=0,\ldots,M_{p}-2\nu.
\end{eqnarray*}
\item [{3.}] Compute the approximate $2\nu$-dimensional sampling map $\boldsymbol{\tilde{F}}(\boldsymbol{\xi})$
from formula (\ref{eq:Ftilde}).
\item [{4.}] Transform $\boldsymbol{\tilde{F}}(\boldsymbol{\xi})$ to its
complex diagonal form (\ref{eq:DISCdiag}).
\item [{5.}] Using Theorem 1, compute the leading order Taylor coefficients
of the mapping $\boldsymbol{W}(\boldsymbol{\boldsymbol{z}}_{\ell})$
and the leading order polynomial coefficient $\beta_{\ell}$ for each
SSM, $W\text{\ensuremath{\left(\mathcal{E}\right)} }$, provided that
the nonresonance condition (\ref{eq:nonresonance}) holds. 
\item [{6.}] Calculate the backbone curve $\mathcal{B}_{\ell}$ defined
in (\ref{eq:BackGraph}) for $W\text{\ensuremath{\left(\mathcal{E}\right)} }$.
Higher-order approximations to $\mathcal{B}_{\ell}$ can be computed
similarly, as summarised briefly in Remark \ref{rem:higher order}.
\end{description}
This algorithm provides the simplest possible first-order approach
to SSM-based backbone curve reconstruction. This simplest approach
does not fully exploit the uniqueness class $C^{\sigma(\mathcal{E})+1}$
of $W\text{\ensuremath{\left(\mathcal{E}\right)} }$, as guaranteed
by Theorem 1. To obtain higher precision approximations to $\mathcal{B}_{\ell}$,
one must derive higher-order Taylor coefficients of $\boldsymbol{W}(\boldsymbol{\boldsymbol{z}}_{\ell})$
and $\beta_{\ell}$ from the invariance condition (\ref{eq:invariancecond0}),
which we do not pursue here. 

\section{Examples}

We now demonstrate the application of SSM-based model reduction and
backbone-curve reconstruction in two examples. First, we consider
a two-degree-of-freedom damped, nonlinear oscillator model to benchmark
data-based SSM reconstruction in a case where analytic, model-based
computations are also possible. Second, we use vibration decay data
from an oscillating beam experiment to illustrate the direct computation
of backbone curves $\mathcal{B}_{\ell}$ from an experimentally reconstructed
sampling map $\boldsymbol{\tilde{F}}.$ 

\subsection{Modified Shaw\textendash Pierre example\label{subsec:Modified-Shaw--Pierre-example}}

We slightly modify here the two-degree-of-freedom oscillator studied
by Shaw and Pierre \citep{ShawPierre} by making the damping matrix
proportional to the stiffness matrix in the linearised problem. The
first-order equations of motion we study are 
\begin{equation}
\begin{array}{rl}
\dot{x}_{1} & =v_{1},\\
\dot{x}_{2} & =v_{2},\\
\dot{v}_{1} & =-cv_{1}-k_{0}x_{1}-\kappa x_{1}^{3}-k_{0}(x_{1}-x_{2})-c(v_{1}-v_{2}),\\
\dot{v}_{2} & =-cv_{2}-k_{0}x_{2}-k_{0}(x_{2}-x_{1})-c(v_{2}-v_{1}).
\end{array}\label{eq:ShawPierreModel}
\end{equation}

We first calculate SSMs and backbone curves for this system using
a formulation for continuous dynamical systems, as described in Appendix
A. We then emulate an experimental sampling of the vibrations of system
(\ref{eq:ShawPierreModel}) and reconstruct SSMs and backbone curves
from the sampled data using the discrete methodology described in
Sections \ref{sec:Sampled-nonlinear-vibrations}-\ref{subsec:ModelID}. 

System (\ref{eq:ShawPierreModel}) is analytic, hence we have $r=a$
in our notation. The natural frequencies and damping ratios are 
\[
\omega_{1}=\sqrt{k_{0}},\qquad\omega_{2}=\sqrt{3k_{0}},\qquad\zeta_{1}=\frac{c}{2\sqrt{k_{0}}},\qquad\zeta_{2}=\frac{\sqrt{3}c}{2\sqrt{k_{0}}},
\]
yielding the complex eigenvalues 
\[
\lambda_{1,2}=-\frac{c}{2}\pm i\sqrt{k_{0}\left(1-\frac{c^{2}}{4k_{0}}\right)},\quad\lambda_{3,4}=-\frac{3c}{2}\pm i\sqrt{3k_{0}\left(1-\frac{3c^{2}}{4k_{0}}\right)},
\]
where we have assumed that both modes are underdamped, i.e., $c<2\sqrt{k_{0}/3}.$

For the corresponding two-dimensional modal subspaces $E_{1}$ and
$E_{2},$ Remark \ref{rem: case of proportional damping} gives
\[
\sigma(E_{1})=\mathrm{Int}\,\left[\frac{\mathrm{Re}\,\lambda_{3}}{\mathrm{Re}\,\lambda_{1}}\right]=\mathrm{Int}\,\left[\frac{\frac{\sqrt{3}c}{2\sqrt{k_{0}}}\sqrt{3k_{0}}}{\frac{c}{2\sqrt{k_{0}}}\sqrt{k_{0}}}\right]=3,\qquad\sigma(E_{2})=\mathrm{Int}\,\left[\frac{\mathrm{Re}\,\lambda_{1}}{\mathrm{Re}\,\lambda_{3}}\right]=\mathrm{Int}\,\left[\frac{\frac{c}{2\sqrt{k_{0}}}\sqrt{k_{0}}}{\frac{\sqrt{3}c}{2\sqrt{k_{0}}}\sqrt{3k_{0}}}\right]=0.
\]
Therefore, by Theorem \ref{thm: existence SSM-1} of Appendix B, there
exist two-dimensional, analytic SSMs, $W(E_{1})$ and $W(E_{2})$
that are unique among $C^{4}$ and $C^{1}$ invariant manifolds tangent
to $E_{1}$ and $E_{2}$, respectively, at the origin. 

By the analytic calculations detailed in Appendix C, we obtain the
corresponding backbone curve parametrisations
\[
\omega(\rho_{1})=\frac{1}{2}\left(\sqrt{4k_{0}-c^{2}}+\frac{3\kappa}{\sqrt{4k_{0}-c^{2}}}\rho_{1}^{2}\right),\qquad\mathrm{Amp}(\rho_{1})\approx2\rho_{1},
\]
\[
\omega(\rho_{2})=\frac{1}{2}\left(\sqrt{3\left(4k_{0}-3c^{2}\right)}+\frac{\sqrt{3}\kappa}{\sqrt{4k_{0}-3c^{2}}}\rho_{2}^{2}\right),\qquad\mathrm{Amp}(\rho_{2})\approx2\rho_{2}.
\]

To determine these backbone curves for the damping and stiffness values
$c=0.003$, $k_{0}=1$, and $\kappa=0.5$, we emulate a hammer experiment
that gives an initial condition in the modal subspaces $E_{1}$ and
$E_{2}$ to the full nonlinear system. The precise initial conditions
of the two decaying signals are
\begin{equation}
\boldsymbol{x}^{(1)}(0)=\frac{1}{\sqrt{3}}\left(2,2,0,0\right)^{T}\in E_{1},\hfill\boldsymbol{x}^{(2)}(0)=\frac{1}{3}\left(-2,2,0,0\right)^{T}\in E_{2}.\label{eq:IC for SP}
\end{equation}
 We sample the solutions starting from these points $8000$ times
with the sampling interval $T=0.8.$ In terms of our notation, we
therefore have $P=2$, $M_{1}=M_{2}=8000.$ As observable, we choose
the velocity of the first mass was i.e., let $\varphi(\boldsymbol{x})=v_{1}$,
to emulate an experimental procedure that renders only velocities
(as in our second example below). As minimal embedding dimension for
the sampling map $\boldsymbol{\tilde{F}}(\boldsymbol{\xi})$, Step
1 of the algorithm in Section \ref{sec:Summary-of-algortihm} gives
$2\nu=6$. In the present example, however, we know that $E_{1}$
and $E_{2}$ are properly embedded already in the four-dimensional
system (\ref{eq:ShawPierreModel}), and hence we select $2\nu=4$
instead. 

The red curve in Figure \ref{fig:ShBackb} shows a closed-form quintic
computation (cf. Remark \ref{rem:higher order}) of the backbone curves
$\mathcal{B}_{1}$ and $\mathcal{B}_{2}$ from the data-assimilating
discrete algorithm described in Section \ref{sec:Summary-of-algortihm}.
The two trajectories used as inputs for this algorithm were launched
from the initial conditions (\ref{eq:IC for SP}).

For comparison, the green dashed line in the same figure shows a cubic
analytic computation of the backbone curves based on the continuous-time
(vector-field) formulation we have given in Theorem \ref{thm:coefficients-1}
of Appendix B. Finally, we have used numerical continuation \citep{KnutURL}
at various amplitudes of forcing to find periodic orbits for low damping
with $c=0.0005$. The resulting periodic response amplitudes are shown
in Figure \ref{fig:ShBackb} in blue as functions of the forcing frequency.
The $\mathcal{O}(5)$ backbone curve fits remarkably well with the
peaks of the blue curves, especially considering that these backbone
curves were computed from just two sampled trajectories. The robustness
of the backbone curves is also noteworthy, given that the blue curves
were obtained for substantially lower damping values.

\begin{figure}[th]
\begin{centering}
\includegraphics[width=1\linewidth]{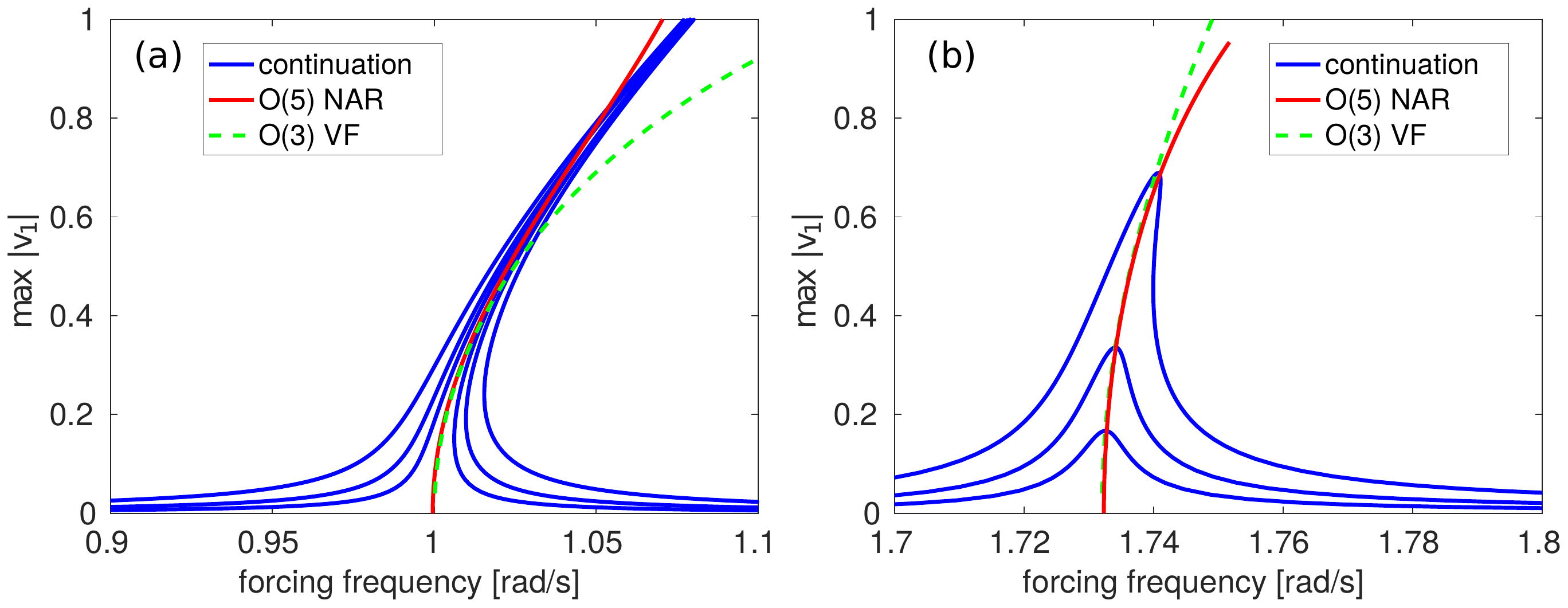}
\par\end{centering}
\caption{Backbone curves and forced response of the mechanical system (\ref{eq:ShawPierreModel})
Blue curves show forced responses of the lightly damped system $c=0.0005$.
Red continuous lines show the fifth-order backbone curves recovered
from our algorithm by sampling two freely decaying trajectories with
initial conditions (\ref{eq:IC for SP}). Green dashed lines show
the $\mathcal{O}(3)$ analytic calculation of the same backbone curves
using Theorem \ref{thm:coefficients-1} of Appendix B.\label{fig:ShBackb}}
\end{figure}

\subsection{Clamped-clamped beam}

We now test the trajectory-data-assimilating backbone-curve reconstruction
algorithm of Section \ref{sec:Summary-of-algortihm} on experimental
data obtained from the vibration tests described in \citep{Ehrhardt2016612}.
We show the experimental device, a beam clamped at both ends, in Figure
\ref{fig:CCBeam}.

\begin{figure}[th]
\begin{centering}
\includegraphics[width=0.8\linewidth]{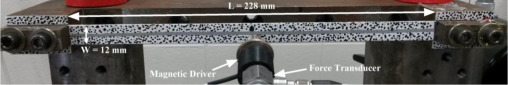}
\par\end{centering}
\caption{The experimental set-up for constructing backbone curves for a clamped-clamped
beam. Reproduced from \citep{Ehrhardt2016612}.\label{fig:CCBeam}}
\end{figure}

The data comprises freely decaying velocity signals measured at the
midpoint of the beam with initial conditions selected near three assumed
SSMs. These initial conditions were obtained experimentally by force
appropriation (cf. the Introduction). The decaying signals were initialised
at maximal response amplitudes obtained from single-frequency force
appropriation. Three signals were assimilated, corresponding to each
natural frequency, which gives $P=3$ in our notation. Each signal
was re-sampled with time period $T=0.97656$ ms. The length of the
three signals were $M_{1}=3892$, $M_{2}=2458$ and $M_{3}=1055$
samples.

The second mode was not analysed in \citep{Ehrhardt2016612}, because
the node of this mode is precisely at the midpoint of the beam, which
can significantly deteriorate measurement accuracy. We list the natural
frequencies identified from the NAR model in Table \ref{tab:CCBnatFreq}.
In the last row of the same table, we also show the spectral quotients
obtained from formula (\ref{eq:sigma from parameters}) for the three
modes.

\begin{table}[th]
\begin{centering}
\begin{tabular}{|c|c|c|c|}
\hline 
Mode & $l=1$ & $l=2$ & $l=3$\tabularnewline
\hline 
\hline 
$\omega_{l}$ {[}Hz{]} & $47.4921$ & $167.1512$ & $368.4577$\tabularnewline
\hline 
$\zeta_{l}$ & $0.1833$ & $0.0183$ & $0.0019$\tabularnewline
\hline 
$\sigma(\mathcal{E}_{l})$ & $0$ & $2$ & $12$\tabularnewline
\hline 
\end{tabular}
\par\end{centering}
\caption{Natural frequencies and damping ratios for the first three modes of
the clamped-clamped beam as determined by our algorithm. Also shown
are the spectral quotients $\sigma(\mathcal{E}_{l})$. The $\omega_{l}$
values are close to those linearly identified in \citep{Ehrhardt2016612},
but the $\zeta_{l}$ values are markedly different. \label{tab:CCBnatFreq}}
\end{table}

Based on Table 1, Theorem \ref{thm: existence SSM} gives a unique
SSM $W(\mathcal{E}_{1})$ within the class of $C^{1}$ manifolds.
This is because the first mode represents the fastest decaying linear
subspace of oscillations, admitting a unique nonlinear continuation
in the form of the fast SSM $W(\mathcal{E}_{1})$. The second (slow)
SSM $W(\mathcal{E}_{2})$ and the third (intermediate) SSM, $W(\mathcal{E}_{3}),$
are only unique among $C^{3}$ and $C^{13}$ invariant manifolds tangent
to the spectral subspaces $\mathcal{E}_{2}$ and $\mathcal{E}_{3}$,
respectively. This suggests that backbone reconstruction techniques
that do not consider the smoothness of the underlying SSM are expected
to show greater uncertainty for the second and the third mode.

We seek to obtain an NAR model for the delay embedding of all three
modes in Table 1. This means we have $\nu=3$, and hence the required
minimal dimension of the reconstructed nonlinear sampling map $\boldsymbol{\tilde{F}}(\boldsymbol{\xi})$
is $2\nu=6.$ We employ a third-order polynomial model $(r=3)$ in
the NAR model of Section \ref{subsec:ModelID}. Accordingly, we construct
the dynamics on the three SSMs up to cubic order (cf. formula (\ref{eq:1DOFred})),
with the Taylor coefficients of $\boldsymbol{W}$ and $\boldsymbol{R}$
computed from the formulas given in Theorem \ref{thm:coefficients}.

Figure \ref{fig:CCBresult} shows the results of our computations.
To be consistent with Ehrhardt and Allen \citep{Ehrhardt2016612},
we compute the response amplitudes by dividing the available instantaneous
velocity amplitudes with their corresponding instantaneous frequencies.
This simple devision, therefore, represents the function $\mathcal{P}$
from the observable space to the relevant coordinate space (cf. Remark
\ref{rem: observed backbone}). The resulting backbone curve of the
first SSM matches well previous results. This is expected, because
this SSM is the most robust among the three SSMs considered here (unique
already among $C^{1}$ invariant manifolds tangent to the spectral
subspace $\mathcal{E}_{1}$). The kink at about $90$ Hz appears to
be an artefact of $\mathcal{O}(3)$ model fitting. Higher amplitude
results for this SSM (not shown) are even less reliable because of
the relative sparsity of the data there.

There is no comparison available from Ehrhardt and Allen \citep{Ehrhardt2016612}
for the second backbone curve, but the backbone curve we compute for
this case is consistent with the instantaneous amplitude-frequency
data (green) inferred from decaying vibrations. 

For the third SSM, there is a noticeable offset between the force
appropriation result and the rest of the curves. Our calculations,
however, match closely the instantaneous amplitude-frequency data,
with the backbone curve obtained from resonance decay. Capturing the
SSM corresponding to this mode uniquely would theoretically require
a high-order, $\mathcal{O}(13)$ approximation. This, however, would
be unfeasible given the limited amount of data available.

\begin{figure}[th]
\begin{centering}
\includegraphics[width=1\linewidth]{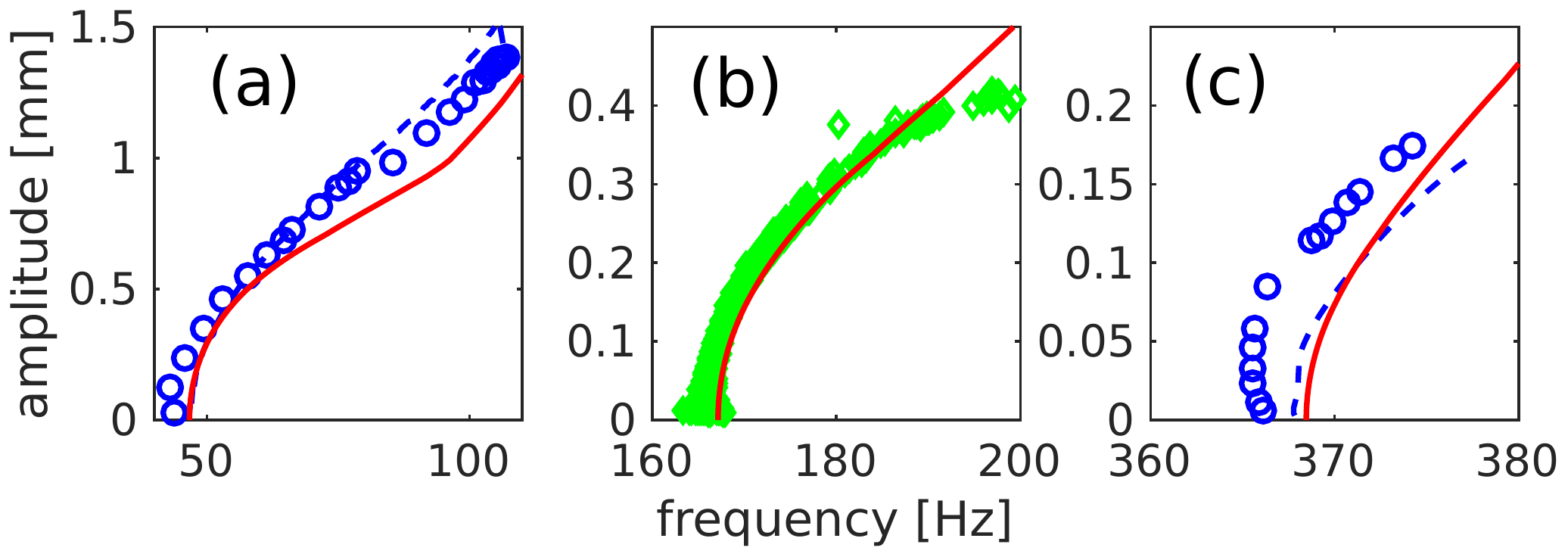}
\par\end{centering}
\caption{The first three backbone curves of a clamped-clamped beam. Red curves:
backbone curves computed from a data-assimilating cubic-order SSM
reduction, as summarised in Section \ref{sec:Summary-of-algortihm};
Blue dashed lines: backbone curves obtained from individual decaying
signals using a Hilbert transform approach \citep{Feldman1997475}.
Blue circles: force-appropriation results using stepped sine forcing.
Green diamonds: Instantaneous amplitude-frequency curves inferred
from decaying vibration data by calculating zero crossings of the
signal to estimate vibration period. Apart from the red curves, all
data was obtained directly from the experiments of Ehrhardt and Allen
\citep{Ehrhardt2016612}. \label{fig:CCBresult}}
\end{figure}

\section{Discussion}

We have developed a method to extract two-dimensional spectral submanifolds
(SSMs) and their associated backbone curves for multi-degree-of-freedom
nonlinear mechanical vibrations. We computed the SSMs explicitly as
two-dimensional invariant manifolds of a low-order, discrete model
system fitted to sampled trajectory data. Restricted to the SSMs,
this model is guaranteed to be conjugate to the full mechanical system
by the classic Takens embedding theorem, as long as the data assimilated
into the model is from a generic observable.

We have illustrated the power of this approach by calculating backbone
curves of the reconstructed dynamics on the SSMs in two examples.
In our first example, a two-degree-of-freedom analytic model, we verified
the trajectory-data-based backbone-curve computation via an analytic
calculation of the same curve for the full, continuous-time system,
as well as by numerical continuation. In our second example, we compared
the data-assimilated construction of the backbone curves with various
experimentally inferred curves and found close agreement. 

To obtain SSMs and their reduced dynamics analytically, we use the
parameterisation method of \citep{CabreLlave2003}, which is generally
not limited to a small neighbourhood of a fixed point. In addition,
the parametrisation method allows for the presence of resonances or
near-resonances that unavoidably arise in underdamped oscillations
(cf. eq. (\ref{eq:near resonance})). This is in contrast with parametrised
SSM constructions based on Sternberg's analytic linearisation theorem
(cf. Cirillo et al. \textbf{\citep{Cirillo2015}}) that exclude any
resonance in the linearised spectrum. When applied in the near-resonant
case, the domain of validity of the analytic linearisation and the
manifolds construction is, therefore, exceedingly small. In addition,
reliance on analytic linearisation excludes the possibility of extracting
backbone curves, which arise from the nonlinear dynamics on the reconstructed
SSM.

The parametrisation method enables us to identify SSMs with high accuracy
on larger domains, even from relatively low-amplitude trajectory samples,
as long as we use high-enough order in the approximations for the
SSMs and its reduced dynamics. This high-enough order ensures the
accurate interrogation of nonlinearities even from low-amplitude signals.
In our examples, a fifth-order computation yielded remarkably accurate
results even for higher-amplitude ranges of the backbone curve, while
a third-order computation was effective for lower-amplitude backbone-curve
ranges.

Our algorithm is devised in a way so that an arbitrary number of decaying
vibrations can be assimilated into the underlying reduced-order discrete
NAR model. Unlike normal forms derived specifically for given modes
of interest, our model incorporates the dynamics of all modes of interest
simultaneously. This should make the reconstructed sampling map $\tilde{\mathcal{\boldsymbol{F}}}$
an ideal tool for use in model-based control. 

We also envisage a closed loop identification of SSMs and backbone
curves, similar to control-based continuation techniques \citep{SieberPRL2008}.
In this case, a measure of invariance derived from equation (\ref{eq:invariancecond})
would serve as a test functional. 

\section*{Acknowledgements}

The authors thank David Barton, Alan Champneys, Gaetan Kerschen, Simon
Neild and Alex Vakakis for very helpful discussions. We are also grateful
to Thomas Breunung for catching several typos and an error in an earlier
draft of the manuscript. The work of R.S. and D.E. was partly funded
by EPSRC under the Engineering Nonlinearity Programme grant no.~EP/K003836/1.

\section{Appendix A: Proof of Theorem \ref{thm:coefficients}}

By the relationship (\ref{eq:invariancecond0}), we need to solve
the algebraic equation 
\begin{equation}
\boldsymbol{\Lambda}\boldsymbol{W}+\boldsymbol{G}\circ\boldsymbol{W}=\boldsymbol{W}\circ\boldsymbol{R}\label{eq:invariancecond}
\end{equation}
for the unknown Taylor coefficients $w_{j}^{\boldsymbol{s}}$ and
$r_{j}^{\boldsymbol{s}}$. We carry this out step by step for increasing
powers of $\boldsymbol{z}$:
\begin{description}
\item [{$\mathcal{O}\left(\left|\boldsymbol{z}\right|\right)$:}] Since
the Taylor series of $\boldsymbol{G}$ starts with second-order terms,
the first-order monomials of $\boldsymbol{z}$ arising from substitution
into (\ref{eq:invariancecond}) satisfy $\boldsymbol{\Lambda}\boldsymbol{W}=\boldsymbol{W}\circ\boldsymbol{R,}$
which simplifies to $\boldsymbol{\Lambda}\boldsymbol{W}=\boldsymbol{W}\boldsymbol{\Lambda}$,
because the linear part of $\boldsymbol{R}$ is equal to the $\mathrm{diag}\left\{ \mu_{\ell},\bar{\mu}_{\ell}\right\} $
block of the diagonal matrix $\boldsymbol{\Lambda}$ (cf. (\ref{eq:1DOFred})).
Consequently, (\ref{eq:invariancecond}) can be written at leading
order as 
\begin{equation}
\mu_{j}w_{j}^{(1,0)}=w_{j}^{(1,0)}\mu_{\ell},\qquad\bar{\mu}_{j}w_{j+1}^{(0,1)}=w_{j+1}^{(0,1)}\bar{\mu}_{\ell},\label{eq:first order W}
\end{equation}
 whose simplest solution is 
\begin{equation}
w_{j}^{(1,0)}=\delta_{j\ell},\qquad w_{j}^{(0,1)}=\delta_{j(\ell+1)},\label{eq:first order W nonresonant choice}
\end{equation}
with $\delta_{j\ell}$ denoting the Kronecker delta. This proves the
formulas for the first-order coefficients in Theorem \ref{thm:coefficients}.
We note that $w_{\ell}^{(1,0)}$ and $w_{\ell+1}^{(0,1)}$ are only
determined up to a constant, which we have chosen to be equal to $1$. 
\item [{$\mathcal{O}\left(\left|\boldsymbol{z}\right|^{2}\right)$:}] Since
$r_{j}^{\boldsymbol{s}}=0$ for $\left|\boldsymbol{s}\right|=2$ by
(\ref{eq:1DOFred}), the quadratic terms in $\boldsymbol{\left|z\right|}$
on the right-hand side of (\ref{eq:invariancecond}) only arise from
the substitution of linear terms of $\boldsymbol{R}(\boldsymbol{z})$
into the quadratic terms of $\boldsymbol{W}(\boldsymbol{z})$. As
a consequence, equating the coefficients of $\mathcal{O}\boldsymbol{\left(\left|z\right|^{2}\right)}$
terms on both sides of (\ref{eq:invariancecond}) gives the equation
\begin{equation}
\mu_{j}w_{j}^{\left(s_{1},s_{2}\right)}+g_{j}^{\left(s_{1}@\ell,s_{2}@\left(\ell+1\right)\right)}=\mu_{\ell}^{s_{1}}\bar{\mu}_{\ell}^{s_{2}}w_{j}^{\left(s_{1},s_{2}\right)},\qquad\left|\boldsymbol{s}\right|=2,\quad\label{eq:DISC2ndOrd}
\end{equation}
whose solution for $w_{j}^{\left(s_{1},s_{2}\right)}$ is
\begin{equation}
w_{j}^{(s_{1},s_{2})}=\frac{g_{j}^{\left(s_{1}@\ell,s_{2}@\left(\ell+1\right)\right)}}{\mu_{\ell}^{s_{1}}\bar{\mu}_{\ell}^{s_{2}}-\mu_{j}},\quad\qquad\left|\boldsymbol{s}\right|=2,\label{eq:W_j second order general}
\end{equation}
proving the formulas for the second-order coefficients $w_{j}^{(2,0)},$
$w_{j}^{(1,1)}$ and $w_{j}^{(0,2)}$ in the statement of Theorem
\ref{thm:coefficients}. Note that the denominator in (\ref{eq:W_j second order general})
is guaranteed to be nonzero by the nonresonance condition (\ref{eq:nonresonance}).
\item [{$\mathcal{O}\left(\left|\boldsymbol{z}\right|^{3}\right)$:}] We
write out the $j^{th}$ coordinates in the three terms of eq. (\ref{eq:invariancecond})
in detail to obtain the following cubic terms:
\begin{eqnarray}
\left(\boldsymbol{\Lambda}\boldsymbol{W}\right)_{j}^{(3)} & = & \mu_{j}\left(w_{j}^{(3,0)}z_{\ell}^{3}+w_{j}^{(2,1)}z_{\ell}^{2}\bar{z}_{\ell}+w_{j}^{(1,2)}z_{\ell}\bar{z}_{\ell}^{2}+w_{j}^{(0,3)}\bar{z}_{\ell}^{3}\right),\label{eq:coeff1}\\
\left(\boldsymbol{G}\circ\boldsymbol{W}\right)_{j}^{(3)} & = & \sum_{\begin{array}{c}
\begin{smallmatrix}\left|\boldsymbol{m}\right|=2\end{smallmatrix}\\
\begin{smallmatrix}y^{\boldsymbol{m}}=\mathcal{O}\left(\boldsymbol{\left|z\right|^{3}}\right)\end{smallmatrix}
\end{array}}g_{j}^{\boldsymbol{m}}\boldsymbol{y}^{\boldsymbol{m}}+\sum_{\begin{array}{c}
\begin{smallmatrix}\left|\boldsymbol{m}\right|=3\end{smallmatrix}\\
\begin{smallmatrix}y^{\boldsymbol{m}}=\mathcal{O}\left(\boldsymbol{\left|z\right|^{3}}\right)\end{smallmatrix}
\end{array}}g_{j}^{\boldsymbol{m}}\boldsymbol{y}^{\boldsymbol{m}},\label{eq:coeff2}\\
\left(\boldsymbol{W}\circ\boldsymbol{R}\right)_{j}^{(3)} & = & \sum_{\begin{array}{c}
\begin{smallmatrix}\left|\boldsymbol{s}\right|=1,2,3\end{smallmatrix}\\
\begin{smallmatrix}\boldsymbol{\,r}^{\boldsymbol{s}}=\mathcal{O}\left(\boldsymbol{\left|z\right|^{3}}\right)\end{smallmatrix}
\end{array}}w_{j}^{(s_{1},s_{2})}r_{\ell}^{s_{1}}\bar{r}_{\ell}^{s_{2}}\nonumber \\
 & = & \sum_{\begin{array}{c}
\begin{smallmatrix}\left|\boldsymbol{s}\right|=1,2,3\end{smallmatrix}\\
\begin{smallmatrix}\boldsymbol{\,r}^{\boldsymbol{s}}=\mathcal{O}\left(\boldsymbol{\left|z\right|^{3}}\right)\end{smallmatrix}
\end{array}}w_{j}^{(s_{1},s_{2})}\left(\mu_{\ell}z_{\ell}+\beta_{\ell}z_{\ell}^{2}\bar{z}_{\ell}\right)^{s_{1}}\left(\bar{\mu}_{\ell}\bar{z}_{\ell}+\bar{\beta_{\ell}}z_{\ell}\bar{z}_{\ell}^{2}\right)^{s_{2}}\nonumber \\
 & = & w_{j}^{(3,0)}\mu_{\ell}^{3}z_{\ell}^{3}+w_{j}^{(0,3)}\bar{\mu}_{\ell}^{3}\bar{z}_{\ell}^{3}+w_{j}^{(2,1)}\mu_{\ell}^{2}\bar{\mu}_{\ell}z_{\ell}^{2}\bar{z}_{\ell}\nonumber \\
 &  & +w_{j}^{(1,2)}\mu_{\ell}\bar{\mu}_{\ell}^{2}z_{\ell}\bar{z}_{\ell}^{2}+w_{j}^{(1,0)}\beta_{\ell}z_{\ell}^{2}\bar{z}_{\ell}+w_{j}^{(0,1)}\bar{\beta_{\ell}}z_{\ell}\bar{z}_{\ell}^{2}.\label{eq:coeff3}
\end{eqnarray}
We now write out the individual terms in eq. (\ref{eq:coeff2}). For
$\boldsymbol{y}=\boldsymbol{W}(\boldsymbol{z})$, we have
\begin{eqnarray*}
y_{p}y_{q} & = & \left(\delta_{p\ell}z_{\ell}+\delta_{p\left(\ell+1\right)}\bar{z}_{\ell}+w_{p}^{(2,0)}z_{\ell}^{2}+w_{p}^{(1,1)}z_{\ell}\bar{z}_{\ell}+w_{p}^{(0,2)}\bar{z}_{\ell}^{2}\right)\\
 &  & \times\left(\delta_{q\ell}z_{\ell}+\delta_{q\left(\ell+1\right)}\bar{z}_{\ell}+w_{q}^{(2,0)}z_{\ell}^{2}+w_{q}^{(1,1)}z_{\ell}\bar{z}_{\ell}+w_{q}^{(0,2)}\bar{z}_{\ell}^{2}\right)\\
 & = & \left(\delta_{p\ell}w_{q}^{(2,0)}+\delta_{q\ell}w_{p}^{(2,0)}\right)z_{\ell}^{3}+\left(\delta_{p\ell}w_{q}^{(1,1)}+\delta_{q\ell}w_{p}^{(1,1)}+\delta_{p\left(\ell+1\right)}w_{q}^{(2,0)}+\delta_{q\left(\ell+1\right)}w_{p}^{(2,0)}\right)z_{\ell}^{2}\bar{z}_{\ell}\\
 &  & +\left(\delta_{p\ell}w_{q}^{(0,2)}+\delta_{q\ell}w_{p}^{(0,2)}+\delta_{p\left(\ell+1\right)}w_{q}^{(1,1)}+\delta_{q\left(\ell+1\right)}w_{p}^{(1,1)}\right)z_{\ell}\bar{z}_{\ell}^{2}+\left(\delta_{p\left(\ell+1\right)}w_{q}^{(0,2)}+\delta_{q\left(\ell+1\right)}w_{p}^{(0,2)}\right)\bar{z}_{\ell}^{3}\\
 &  & +\mathcal{O}\left(\left|\boldsymbol{z}\right|^{2},\left|\boldsymbol{z}\right|^{4}\right),
\end{eqnarray*}
thus, for the first sum in (\ref{eq:coeff2}), we obtain 
\begin{eqnarray}
\sum_{\begin{array}{c}
\begin{smallmatrix}\left|\boldsymbol{m}\right|=2\end{smallmatrix}\\
\begin{smallmatrix}y^{\boldsymbol{m}}=\mathcal{O}\left(\boldsymbol{\left|z\right|^{3}}\right)\end{smallmatrix}
\end{array}}g_{j}^{\boldsymbol{m}}\boldsymbol{y}^{\boldsymbol{m}} & = & \sum_{p,q=1}^{2\nu}\frac{g_{j}^{(1@p,1@q)}}{2-\delta_{pq}}\left[\left(\delta_{p\ell}w_{q}^{(2,0)}+\delta_{q\ell}w_{p}^{(2,0)}\right)z_{\ell}^{3}+\left(\delta_{p\left(\ell+1\right)}w_{q}^{(0,2)}+\delta_{q\left(\ell+1\right)}w_{p}^{(0,2)}\right)\bar{z}_{\ell}^{3}\right]\nonumber \\
 &  & +\sum_{p,q=1}^{2\nu}\frac{g_{j}^{(1@p,1@q)}}{2-\delta_{pq}}\left[\delta_{p\ell}w_{q}^{(1,1)}+\delta_{q\ell}w_{p}^{(1,1)}+\delta_{p\left(\ell+1\right)}w_{q}^{(2,0)}+\delta_{q\left(\ell+1\right)}w_{p}^{(2,0)}\right]z_{\ell}^{2}\bar{z}_{\ell}\nonumber \\
 &  & +\sum_{p,q=1}^{2\nu}\frac{g_{j}^{(1@p,1@q)}}{2-\delta_{pq}}\left[\delta_{p\ell}w_{q}^{(0,2)}+\delta_{q\ell}w_{p}^{(0,2)}+\delta_{p\left(\ell+1\right)}w_{q}^{(1,1)}+\delta_{q\left(\ell+1\right)}w_{p}^{(1,1)}\right]z_{\ell}\bar{z}_{\ell}^{2}\nonumber \\
 & =2 & \sum_{q=1}^{2\nu}\frac{g_{j}^{(1@\ell,1@q)}}{2-\delta_{\ell q}}w_{q}^{(2,0)}z_{\ell}^{3}+2\sum_{q=1}^{2\nu}\frac{g_{j}^{(1@(\ell+1),1@q)}}{2-\delta_{(\ell+1)q}}w_{q}^{(0,2)}\bar{z}_{\ell}^{3}\nonumber \\
 &  & +2\sum_{q=1}^{2\nu}\left[\frac{g_{j}^{(1@\ell,1@q)}}{2-\delta_{\ell q}}w_{q}^{(1,1)}+\frac{g_{j}^{(1@(\ell+1),1@q)}}{2-\delta_{(\ell+1)q}}w_{q}^{(2,0)}\right]z_{\ell}^{2}\bar{z}_{\ell}\nonumber \\
 &  & +2\sum_{q=1}^{2\nu}\left[\frac{g_{j}^{(1@\ell,1@q)}}{2-\delta_{\ell q}}w_{q}^{(0,2)}+\frac{g_{j}^{(1@(\ell+1),1@q)}}{2-\delta_{(\ell+1)q}}w_{q}^{(1,1)}\right]z_{\ell}\bar{z}_{\ell}^{2}.\label{eq:secondorder}
\end{eqnarray}
For the second sum in (\ref{eq:coeff2}), we have
\begin{eqnarray}
\sum_{\begin{array}{c}
\begin{smallmatrix}\left|\boldsymbol{m}\right|=3\end{smallmatrix}\\
\begin{smallmatrix}y^{\boldsymbol{m}}=\mathcal{O}\left(\boldsymbol{\left|z\right|^{3}}\right)\end{smallmatrix}
\end{array}}g_{j}^{\boldsymbol{m}}\boldsymbol{y}^{\boldsymbol{m}} & = & \underbrace{\sum_{\begin{array}{c}
\begin{smallmatrix}p=1\end{smallmatrix}\\
\begin{smallmatrix}y^{\boldsymbol{m}}=\mathcal{O}\left(\boldsymbol{\left|z\right|^{3}}\right)\end{smallmatrix}
\end{array}}^{2\nu}g_{j}^{(3@p)}y_{p}^{3}}_{A}+\underbrace{\sum_{\begin{array}{c}
\begin{smallmatrix}p,q=1\\
p\neq q
\end{smallmatrix}\\
\begin{smallmatrix}y^{\boldsymbol{m}}=\mathcal{O}\left(\boldsymbol{\left|z\right|^{3}}\right)\end{smallmatrix}
\end{array}}^{2\nu}g_{j}^{(2@p,1@q)}y_{p}^{2}y_{q}}_{B}\label{eq:thirdorder}\\
 &  & +\underbrace{\sum_{\begin{array}{c}
\begin{smallmatrix}p,q,u=1\\
p\neq q;\,p,q\neq u
\end{smallmatrix}\\
\begin{smallmatrix}y^{\boldsymbol{m}}=\mathcal{O}\left(\boldsymbol{\left|z\right|^{3}}\right)\end{smallmatrix}
\end{array}}^{2\nu}g_{j}^{(1@p,1@q,1@u)}y_{p}y_{q}y_{u}}_{C}.\nonumber 
\end{eqnarray}
Working out these expressions in detail, we find that 
\begin{eqnarray}
A & = & \sum_{p=1}^{2\nu}g_{j}^{(3@p)}\left(\delta_{p\ell}^{3}z_{\ell}^{3}+3\delta_{p\ell}^{2}\delta_{p\left(\ell+1\right)}z_{\ell}^{2}\bar{z}_{\ell}+3\delta_{p\ell}\delta_{p\left(\ell+1\right)}^{2}z_{\ell}\bar{z}_{\ell}^{2}+\delta_{p\left(\ell+1\right)}^{3}\bar{z}_{\ell}^{3}\right)\nonumber \\
 & = & g_{j}^{(3@\ell)}z_{\ell}^{3}+g_{j}^{(3@\left(\ell+1\right))}\bar{z}_{\ell}^{3},\nonumber \\
B & = & \sum_{\begin{array}{c}
\begin{smallmatrix}p,q=1\\
p\neq q
\end{smallmatrix}\end{array}}^{2\nu}g_{j}^{(2@p,1@q)}\left(\delta_{p\ell}^{2}z_{\ell}^{2}+2\delta_{p\ell}\delta_{p\left(\ell+1\right)}z_{\ell}\bar{z}_{\ell}+\delta_{p\left(\ell+1\right)}^{2}\bar{z}_{\ell}^{2}\right)\left(\delta_{q\ell}z_{\ell}+\delta_{q\left(\ell+1\right)}\bar{z}_{\ell}\right)\nonumber \\
 & = & \sum_{\begin{array}{c}
\begin{smallmatrix}p,q=1\\
p\neq q
\end{smallmatrix}\end{array}}^{2\nu}g_{j}^{(2@p,1@q)}\left(\delta_{p\ell}^{2}\delta_{q\ell}z_{\ell}^{3}+2\delta_{p\ell}\delta_{q\ell}\delta_{p\left(\ell+1\right)}z_{\ell}^{2}\bar{z}_{\ell}+\delta_{q\ell}\delta_{p\left(\ell+1\right)}^{2}z_{\ell}\bar{z}_{\ell}^{2}\right)\nonumber \\
 & + & \sum_{\begin{array}{c}
\begin{smallmatrix}p,q=1\\
p\neq q
\end{smallmatrix}\end{array}}^{2\nu}g_{j}^{(2@p,1@q)}\left(\delta_{p\ell}^{2}\delta_{q\left(\ell+1\right)}z_{\ell}^{2}\bar{z}_{\ell}+2\delta_{p\ell}\delta_{p\left(\ell+1\right)}\delta_{q\left(\ell+1\right)}z_{\ell}\bar{z}_{\ell}^{2}+\delta_{p\left(\ell+1\right)}^{2}\delta_{q\left(\ell+1\right)}\bar{z}_{\ell}^{3}\right)\nonumber \\
 & = & g_{j}^{(2@\ell,1@(\ell+1))}z_{\ell}^{2}\bar{z}_{\ell}+g_{j}^{(2@(\ell+1),1@\ell)}z_{\ell}\bar{z}_{\ell}^{2},\nonumber \\
C & = & \sum_{\begin{array}{c}
\begin{smallmatrix}p,q,u=1\\
p\neq q;\,p,q\neq u
\end{smallmatrix}\end{array}}^{2\nu}g_{j}^{(1@p,1@q,1@u)}\left(\delta_{p\ell}z_{\ell}+\delta_{p\left(\ell+1\right)}\bar{z}_{\ell}\right)\left(\delta_{q\ell}z_{\ell}+\delta_{q\left(\ell+1\right)}\bar{z}_{\ell}\right)\left(\delta_{u\ell}z_{\ell}+\delta_{u\left(\ell+1\right)}\bar{z}_{\ell}\right)\nonumber \\
 & = & 0.\label{eq:ABC}
\end{eqnarray}
Substituting the expressions for $A,$ $B$ and $C$ into (\ref{eq:thirdorder}),
then substituting (\ref{eq:thirdorder}) and (\ref{eq:secondorder})
into the invariance condition (\ref{eq:invariancecond}), we equate
equal powers of $\boldsymbol{z}$ to obtain the following linear equations
for the cubic coefficients of the mapping $\boldsymbol{W}$ and of
the mapping $\boldsymbol{R}$:
\begin{eqnarray}
\mathcal{O}\left(z_{\ell}^{3}\right) & : & \mu_{j}w_{j}^{(3,0)}+2\sum_{q=1}^{2\nu}\frac{g_{j}^{(1@\ell,1@q)}}{2-\delta_{\ell q}}w_{q}^{(2,0)}+g_{j}^{(3@\ell)}\nonumber \\
 &  & =w_{j}^{(3,0)}\mu_{\ell}^{3},\nonumber \\
\mathcal{O}\left(z_{\ell}^{2}\bar{z}_{\ell}\right) & : & \mu_{j}w_{j}^{(2,1)}+2\sum_{q=1}^{2\nu}\left[\frac{g_{j}^{(1@\ell,1@q)}}{2-\delta_{\ell q}}w_{q}^{(1,1)}+\frac{g_{j}^{(1@(\ell+1),1@q)}}{2-\delta_{(\ell+1)q}}w_{q}^{(2,0)}\right]+g_{j}^{(2@\ell,1@(\ell+1))}\nonumber \\
 &  & =w_{j}^{(2,1)}\mu_{\ell}^{2}\bar{\mu}_{\ell}+w_{j}^{(1,0)}\beta_{\ell},\nonumber \\
\mathcal{O}\left(z_{\ell}\bar{z}_{\ell}^{2}\right) & : & \mu_{j}w_{j}^{(1,2)}+2\sum_{q=1}^{2\nu}\frac{g_{j}^{(1@(\ell+1),1@q)}}{2-\delta_{(\ell+1)q}}+g_{j}^{(2@(\ell+1),1@\ell)}\nonumber \\
 &  & =w_{j}^{(1,2)}\mu_{\ell}\bar{\mu}_{\ell}^{2}+w_{j}^{(0,1)}\bar{\beta_{\ell}},\nonumber \\
\mathcal{O}\left(\bar{z}_{\ell}^{3}\right) & : & \mu_{j}w_{j}^{(0,3)}+2\sum_{q=1}^{2\nu}\frac{g_{j}^{(1@(\ell+1),1@q)}}{2-\delta_{(\ell+1)q}}w_{q}^{(0,2)}+g_{j}^{(3@\left(\ell+1\right))}\nonumber \\
 &  & =w_{j}^{(0,3)}\bar{\mu}_{\ell}^{3}.\label{eq:allterms}
\end{eqnarray}
From the first and last equation in (\ref{eq:allterms}), we obtain
\begin{eqnarray}
w_{j}^{(3,0)} & = & \frac{\sum_{q=1}^{2\nu}\left(1+\delta_{\ell q}\right)g_{j}^{(1@\ell,1@q)}w_{q}^{(2,0)}+g_{j}^{(3@\ell)}}{\mu_{\ell}^{3}-\mu_{j}},\nonumber \\
w_{j}^{(0,3)} & = & \frac{\sum_{q=1}^{2\nu}\left(1+\delta_{(\ell+1)q}\right)g_{j}^{(1@(\ell+1),1@q)}w_{q}^{(0,2)}+g_{j}^{(3@\left(\ell+1\right))}}{\bar{\mu}_{\ell}^{3}-\mu_{j}}.\label{eq:w_j30,03}
\end{eqnarray}
We select $j\neq\ell$ and assume that there is no first-order near-resonance
(or exact resonance) involving the eigenvalues $\mu_{\ell}$ and $\mu_{j}$
(stated as $\mu_{j}\not\approx\mu_{\ell}$ under the assumptions of
the theorem). Recalling $w_{j}^{(1,0)}=\delta_{j\ell}$, we then obtain
from the second equation of (\ref{eq:allterms}) that 
\begin{equation}
w_{j}^{(2,1)}=\frac{\sum_{q=1}^{2\nu}\left[\left(1+\delta_{\ell q}\right)g_{j}^{(1@\ell,1@q)}w_{q}^{(1,1)}+\left(1+\delta_{(\ell+1)q}\right)g_{j}^{(1@(\ell+1),1@q)}w_{q}^{(2,0)}\right]+g_{j}^{(2@\ell,1@(\ell+1))}}{\mu_{\ell}^{2}\bar{\mu}_{\ell}-\mu_{j}},\label{eq:W_j21 not l}
\end{equation}
whenever $\quad\delta_{j\ell}=0$. Similarly, selecting $j\neq\ell+1$,
assuming no first-order near-resonance (or exact resonance) involving
the eigenvalues $\mu_{\ell+1}$ and $\mu_{j}$ (i.e., $\mu_{j}\not\approx\mu_{\ell+1}$),
and recalling $w_{j}^{(0,1)}=\delta_{j(\ell+1)}$, we obtain from
the third equation of (\ref{eq:allterms}) that
\begin{eqnarray}
w_{j}^{(1,2)} & = & \frac{\sum_{q=1}^{2\nu}\left[\left(1+\delta_{\ell q}\right)g_{j}^{(1@\ell,1@q)}w_{q}^{(0,2)}+\left(1+\delta_{(\ell+1)q}\right)g_{j}^{(1@(\ell+1),1@q)}w_{q}^{(1,1)}\right]+g_{j}^{(2@(\ell+1),1@\ell)}}{\mu_{\ell}\bar{\mu}_{\ell}^{2}-\mu_{j}},\nonumber \\
\label{eq:w_j12 not l+1}
\end{eqnarray}
whenever $\delta_{j(\ell+1)}=0$. Next we select $j=\ell$ in the
second equation of (\ref{eq:allterms}), and select $j=\ell+1$ in
the third equation of (\ref{eq:allterms}). These choices force us
to select 
\begin{equation}
w_{\ell}^{(2,1)}=0,\quad\delta_{j\ell}=1,\qquad w_{\ell+1}^{(1,2)}=0,\quad\delta_{j(\ell+1)}=1,\label{eq:w_j cubic vanishing}
\end{equation}
in these equations to avoid small denominators arising from the near-resonances.
Then the second equation of (\ref{eq:allterms}) with $j=\ell$ gives
the solution 
\begin{equation}
\beta_{\ell}=\sum_{q=1}^{2\nu}\left[\left(1+\delta_{\ell q}\right)g_{\ell}^{(1@\ell,1@q)}w_{q}^{(1,1)}+\left(1+\delta_{(\ell+1)q}\right)g_{\ell}^{(1@(\ell+1),1@q)}w_{q}^{(2,0)}\right]+g_{\ell}^{(2@\ell,1@(\ell+1))}.\label{eq:beta proof final}
\end{equation}
But equations (\ref{eq:w_j30,03})-(\ref{eq:beta proof final}) prove
the formulas for the cubic coefficients of $W_{j}$ and $\beta_{\ell}$
in the statement of Theorem \ref{thm:coefficients}. 
\end{description}

\section{Appendix B: Analogous results for continuous dynamical systems}

Here we discuss spectral submanifolds, backbone curves and their leading-order
computation for continuous dynamical systems. The formulas we derive
are useful for benchmarking our data-based SSM and backbone-curve
approach on exactly known mechanical models. The concepts and formulas
derived here, however, are also of independent interest in computing
the dynamics on SSMs in analytically defined mechanical models.

We start with the continuous analogue of the the complex mapping(\ref{eq:DISCdiag}),
which is a complex differential equation of the form

\begin{eqnarray}
\dot{\boldsymbol{y}} & = & \boldsymbol{\Lambda}\boldsymbol{y}+\boldsymbol{G}(\boldsymbol{y}),\qquad\boldsymbol{y}\in\mathbb{C}^{2\nu},\qquad\boldsymbol{\Lambda}=\mathrm{diag}(\lambda_{1},\ldots,\lambda_{2\nu}),\qquad\lambda_{2l-1}=\bar{\lambda}_{2l},\quad l=1,\ldots,\nu,\label{eq:DISCdiag-1}\\
 &  & \boldsymbol{G}(\boldsymbol{y})=\mathcal{O}\left(\left|\boldsymbol{y}\right|^{2}\right),\nonumber 
\end{eqnarray}
with a fixed point at $\boldsymbol{y}=\boldsymbol{0}$, and with a
class $C^{r}$ function $\boldsymbol{G}$. The eigenvalues of $\boldsymbol{\Lambda}$
are ordered so that 
\begin{equation}
\mathrm{Re}\lambda_{2\nu}\leq\ldots\leq\mathrm{Re}\lambda_{1}<0,\label{eq:assumption on eigenvalues of ODE-1}
\end{equation}
and hence $\boldsymbol{y}=\boldsymbol{0}$ is asymptotically stable.
If (\ref{eq:DISCdiag-1}) is the equivalent first-order complexified
form of a mechanical system of the form (\ref{eq:starting point}),
then we specifically have $\nu=n$. If, furthermore, the mechanical
system has linear and weak proportional damping, then we can write
\begin{equation}
\mathrm{Re}\lambda_{l}=-\zeta_{l}\omega_{l},\qquad\mathrm{Im}\lambda_{l}=\sqrt{1-\zeta_{l}^{2}}\omega_{l},\label{eq:imre vector field}
\end{equation}
with $\zeta_{l}$ and $\omega_{l}$ denoting Lehr's damping ratio
and undamped natural frequency, respectively, for the $l^{th}$ mode
of the linearised system at the $\boldsymbol{q}=\boldsymbol{0}$ equilibrium.

Finally, we assume that $\mathcal{E}$ is a two-dimensional spectral
subspace (eigenspace) of the operator $\boldsymbol{\Lambda}$, corresponding
to the complex pair of simple eigenvalues $\lambda_{\ell}=\overline{\lambda}_{\ell+1}$
for some $\ell\in[1,2\nu-1]$. 

\subsection{Existence and uniqueness of SSMs}

Following \citep{Haller2016}, we address this issue via the following
definition:

\begin{defn}
A \emph{spectral submanifold} (SSM) $W(\mathcal{E})$ corresponding
to a spectral subspace $\mathcal{E}$ of $\boldsymbol{\Lambda}$ is
\end{defn}
\begin{description}
\item [{(i)}] an invariant manifold of the dynamical system (\ref{eq:DISCdiag-1})
that is tangent to $\mathcal{E}$ at $\boldsymbol{y}=\boldsymbol{0}$
and has the same dimension as $\mathcal{E}$;
\item [{(ii)}] strictly smoother than any other invariant manifold of (\ref{eq:DISCdiag-1})
satisfying (i).
\end{description}

We now recall from Haller and Ponsioen \citep{Haller2016} the specific
existence and uniqueness result pertaining to two-dimensional SSMs,
deducible from the more general results of Cabré et al. \citep{CabreLlave2003}.
The \emph{relative spectral quotient }of\emph{ $\mathcal{E}$ }is
now defined as the positive integer\foreignlanguage{english}{ }
\begin{eqnarray}
\sigma(\mathcal{E}) & = & \mathrm{Int}\,\left[\frac{\underset{l\neq\ell,\ell+1}{\min}\mathrm{Re}\lambda_{l}}{\mathrm{Re}\lambda_{\ell}}\right]\in\mathbb{N}^{+},\label{eq:relative_sigma-1}
\end{eqnarray}
whose meaning is the same as pointed out after formula (\ref{eq:relative_sigma-1})
for mappings. In case of a proportionally damped mechanical system,
one may use the formulas (\ref{eq:imre vector field}) and conclude
that Remark \ref{rem: case of proportional damping} continues to
provide the correct specific form of $\sigma(\mathcal{E})$ in this
case.

We again assume that
\begin{equation}
\sigma(\mathcal{E})\leq r,\label{eq:spectral condition-1}
\end{equation}
and that no resonance relationships up to order $\sigma(\mathcal{E})$
hold between the eigenvalues $\lambda_{\ell},\overline{\lambda}_{\ell+1}$
and the rest of the spectrum of $\boldsymbol{\Lambda}$, i.e., 

\begin{equation}
s_{1}\lambda_{\ell}+s_{2}\bar{\lambda}_{\ell}\neq\lambda_{j},\qquad\forall j\neq\ell,\ell+1,\qquad2\le s_{1}+s_{2}\le\sigma\left(\mathcal{E}\right).\label{eq:nonresonance-1}
\end{equation}
The alternative form of this nonresonance condition given in Remark
\ref{rem: case of proportional damping} again applies whenever formulas
(\ref{eq:relative_sigma-1}) hold. 
\begin{thm}
\label{thm: existence SSM-1}Assume that conditions (\ref{eq:spectral condition-1})-(\ref{eq:nonresonance-1})
are satisfied. Then following statements hold:
\end{thm}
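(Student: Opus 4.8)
The plan is to obtain Theorem~\ref{thm: existence SSM-1} as the continuous-time counterpart of Theorem~\ref{thm: existence SSM}, by a direct application of Theorem~1.1 of Cabr\'{e}, Fontich and de la Llave \citep{CabreLlave2003}, following the reasoning that Haller and Ponsioen \citep{Haller2016} used in the mapping case. First I would fix the invariant spectral splitting $\mathbb{C}^{2\nu}=\mathcal{E}\oplus\mathcal{E}^{c}$ of the linear part $\boldsymbol{\Lambda}$ in (\ref{eq:DISCdiag-1}), where $\mathcal{E}$ is the two-dimensional eigenspace of the pair $\lambda_\ell=\overline{\lambda}_{\ell+1}$ and $\mathcal{E}^{c}$ is spanned by the remaining eigenvectors. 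The quantitative input is that, by the ordering (\ref{eq:assumption on eigenvalues of ODE-1}), the directions in $\mathcal{E}$ decay slowest among the selected spectral data, so that the relative spectral quotient $\sigma(\mathcal{E})$ defined in (\ref{eq:relative_sigma-1}) is precisely the integer controlling both the attainable smoothness of $W(\mathcal{E})$ and the order up to which the reduced dynamics must retain resonant terms in the parametrization method.

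Next I would check that the abstract hypotheses of \citep{CabreLlave2003} hold verbatim. Condition (\ref{eq:nonresonance-1}) supplies exactly the finitely many non-resonance relations, from order $2$ up to order $\sigma(\mathcal{E})$, needed to solve the homological equations order by order; condition (\ref{eq:spectral condition-1}), $\sigma(\mathcal{E})\le r$, ensures that $\boldsymbol{G}\in C^{r}$ is smooth enough for this iteration to run through order $\sigma(\mathcal{E})$; and the strict inequality $\mathrm{Re}\,\lambda_\ell<0$ in (\ref{eq:assumption on eigenvalues of ODE-1}) places us in the attracting regime covered by Theorem~1.1 of \citep{CabreLlave2003}. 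Granting these, that theorem produces a $C^{r}$ immersion $\boldsymbol{W}\colon\mathcal{U}\subset\mathbb{C}^{2}\to\mathbb{C}^{2\nu}$ tangent to $\mathcal{E}$ at the origin, together with a polynomial $\boldsymbol{R}\colon\mathcal{U}\to\mathcal{U}$ of degree at most $\sigma(\mathcal{E})$ solving the continuous-time analogue $D\boldsymbol{W}(\boldsymbol{z})\,\boldsymbol{R}(\boldsymbol{z})=\boldsymbol{\Lambda}\boldsymbol{W}(\boldsymbol{z})+\boldsymbol{G}(\boldsymbol{W}(\boldsymbol{z}))$ of the invariance condition (\ref{eq:invariancecond}); the $C^{r}$ smoothness of $W(\mathcal{E})$ and its uniqueness within the class of $C^{\sigma(\mathcal{E})+1}$ invariant manifolds tangent to $\mathcal{E}$ are part of the same statement. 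The refinement analogous to statement~(v) of Theorem~\ref{thm: existence SSM} --- that $\boldsymbol{R}$ may be truncated to degree $j_{0}-1$ once the internal non-resonance conditions hold --- follows because those extra non-resonances annihilate the normal-form terms that would otherwise be forced to remain in $\boldsymbol{R}$.

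The one place that requires genuine care is reconciling the \emph{mapping} formulation of \citep{CabreLlave2003} with the present \emph{vector-field} setting. I would handle this either by passing to the time-$T$ flow map of (\ref{eq:DISCdiag-1}), which has the same invariant manifolds and for which $\log|e^{\lambda_j T}|=T\,\mathrm{Re}\,\lambda_j$ shows that $\sigma(\mathcal{E})$ is unchanged, or --- more cleanly --- by running the parametrization method directly on the vector field, where the non-resonance arithmetic $s_1\lambda_\ell+s_2\bar{\lambda}_\ell\neq\lambda_j$ of (\ref{eq:nonresonance-1}) is the exact analogue of the monomial condition $\mu_\ell^{s_1}\bar{\mu}_\ell^{s_2}\neq\mu_j$ used in the discrete proof. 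The only subtlety with the flow-map route is that exponentiation may create spurious resonances among the multipliers $e^{\lambda_j T}$ for a measure-zero set of sampling times $T$; choosing $T$ generically, or arguing intrinsically at the level of the vector field as in \citep{Haller2016}, removes this. Beyond that translation, the proof amounts to a routine verification that each hypothesis of \citep{CabreLlave2003} is met, so I expect no genuinely hard step.
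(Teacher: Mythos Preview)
Your proposal is correct and follows the same route as the paper: both obtain the result by invoking Theorem~1.1 of Cabr\'e et al.~\citep{CabreLlave2003} as specialised by Haller and Ponsioen~\citep{Haller2016} to two-dimensional SSMs of a stable equilibrium. The paper's own proof is in fact even terser---it simply observes that this is a restatement of the main theorem in \citep{Haller2016}---so the discrete-to-continuous translation you worry about is unnecessary, since \citep{Haller2016} already treats the vector-field case directly.
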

\begin{description}
\item [{(i)}] There exists an SSM, $W\left(\mathcal{E}\right),$ for the
dynamical system (\ref{eq:DISCdiag-1}) that is tangent to the invariant
subspace $\mathcal{E}$ at the $\boldsymbol{y}=\boldsymbol{0}$ fixed
point. 
\item [{(ii)}] The invariant manifold $W\left(\mathcal{E}\right)$ is class
$C^{r}$ smooth and unique among all two-dimensional, class $C^{\sigma\left(\mathcal{E}\right)+1}$
invariant manifolds of (\ref{eq:DISCdiag-1}) that are tangent to
$\mathcal{E}$ at $\boldsymbol{y}=\boldsymbol{0}$. 
\item [{(iii)}] The SSM $W\left(\mathcal{E}\right)$ can be viewed as a
$C^{r}$ immersion of an open set $\mathcal{U}\subset\mathbb{C}^{2}$
into the phase space $\mathbb{C}^{2\nu}$ of system (\ref{eq:DISCdiag-1})
via a map 
\begin{eqnarray}
\boldsymbol{W}:\mathcal{U}\subset\mathbb{C}^{2} & \to & \mathbb{C}^{2\nu},\qquad\boldsymbol{W}\left(\mathcal{U}\right)=W\left(\mathcal{E}\right).\label{eq:W embedding-1}
\end{eqnarray}
\item [{(iv)}] There exists $C^{r}$ polynomial function $\boldsymbol{R}\colon\mathcal{U}\to\mathcal{U}$
such that 
\begin{equation}
\boldsymbol{\Lambda}\boldsymbol{W}+\boldsymbol{G}\circ\boldsymbol{W}=D\boldsymbol{W}\boldsymbol{R},\label{eq:invariancecond0-1}
\end{equation}
 i.e., the dynamics on the SSM, expressed in coordinates $\boldsymbol{z}=(z_{\ell},\bar{z}_{\ell})\in\mathcal{U},$
is governed by the polynomial ODE
\[
\dot{\boldsymbol{z}}=\boldsymbol{R}(\boldsymbol{z}),\qquad D\boldsymbol{R}(0)=\mathrm{diag}(\lambda_{\ell},\bar{\lambda}_{\ell}),
\]
whose right-hand side has only terms up to order $\mathcal{O}\left(\left|\boldsymbol{z}\right|^{\sigma\left(\mathcal{E}\right)}\right)$.
\item [{(v)}] Under the further internal non-resonance assumption 
\begin{equation}
s_{1}\lambda_{\ell}+s_{2}\bar{\lambda}_{\ell}\neq\lambda_{j},\qquad j=\ell,\ell+1,\qquad2\le s_{1}+s_{2}\le\sigma\left(\mathcal{E_{\ell}}\right),\label{eq:strict nonresonance-1}
\end{equation}
\emph{within} $\mathcal{E}$, the mapping $\boldsymbol{W}$ in \ref{eq:W embedding-1}
can be selected such that the $j^{th}$ coordinate component $R_{j}$
of $\boldsymbol{R}$ does not contain the term $(z_{\ell}^{s_{1}},\bar{z}_{\ell}^{s_{2}})$.
\end{description}
\begin{proof}
This is merely the re-statement of the main theorem of Haller and
Ponsioen \citep{Haller2016} (deduced from Cabré et al. \citep{CabreLlave2003})
for the case of a two-dimensional SSM corresponding to a simple pair
of complex eigenvalues with negative real parts.
\end{proof}

\subsection{Backbone curves and their computation}

When the spectral subspace $\mathcal{E}$ of (\ref{eq:DISCdiag-1})
is lightly damped ($\left|\mathrm{Re}\lambda_{\ell}\right|\ll1$),
the low-order near-resonance relationships 
\[
2\lambda_{\ell}+\bar{\lambda}_{\ell}\approx\lambda_{\ell},\quad\lambda_{\ell}+2\bar{\lambda}_{\ell}\approx\bar{\lambda}_{\ell}
\]
always hold. As in the case of mappings, this prompts us to seek the
polynomial dynamics on the SSM (cf. statement (iv) of Theorem \ref{thm: existence SSM-1})
in the form 
\begin{align}
\dot{\boldsymbol{z}}= & \boldsymbol{R}(\boldsymbol{z})=\begin{pmatrix}\begin{array}{l}
\lambda_{\ell}z_{\ell}+\beta_{\ell}z_{\ell}^{2}\bar{z_{\ell}}\\
\bar{\lambda}_{\ell}\bar{z_{\ell}}+\bar{\beta_{\ell}}z_{\ell}\bar{z_{\ell}}^{2}
\end{array}\end{pmatrix}.\label{eq:1DOFred-1}
\end{align}

Introducing polar coordinates $z=re^{i\theta}$, we can further transform
(\ref{eq:1DOFred-1}) to the real amplitude-phase equations 
\begin{eqnarray}
\dot{\rho} & = & \rho\left(\mathrm{Re}\lambda_{\ell}+\mathrm{Re}\beta_{\ell}\rho^{2}\right),\label{eq:RadAmpl-1}\\
\dot{\theta} & = & \mathrm{Im}\lambda_{\ell}+\mathrm{Im}\beta_{\ell}\rho^{2}.\label{eq:RadAngle-1}
\end{eqnarray}

Equation (\ref{eq:RadAngle-1}) gives instantaneous frequency of nonlinear
oscillations as 
\begin{equation}
\omega(\rho)=\mathrm{Im}\lambda_{\ell}+\mathrm{Im}\beta_{\ell}\rho^{2},\label{eq:omega cont}
\end{equation}
whereas as instantaneous amplitude $\mathrm{Amp}(\rho)$ of the vibration
can be calculated as 
\begin{equation}
\mathrm{Amp}(\rho)=\frac{1}{2\pi}\intop_{0}^{2\pi}\left|\boldsymbol{V}\boldsymbol{\,W}(\boldsymbol{z}(\rho,\theta))\right|\,d\theta,\label{eq:closed curve-1}
\end{equation}
where $\boldsymbol{W}$ is the mapping featured in statement (iii)
of Theorem \ref{thm: existence SSM-1}, and $\boldsymbol{V}$ is the
linear mapping that transform the original, first-order dynamical
system to its standard complex form (\ref{eq:DISCdiag-1}). With the
quantities defined in (\ref{eq:omega cont}) and (\ref{eq:closed curve-1}),
the definition of a backbone curve $\mathcal{B}_{\ell}$ given in
Definition (\ref{def:backbone}) carries over without change to our
present context. Again, to compute the backbone curve (\ref{eq:BackGraph}),
we need to find expressions for the complex coefficient $\beta_{\ell}$
and the mapping $\boldsymbol{W}(\boldsymbol{z})$, as the eigenvalue
$\lambda_{\ell}$ is assumed to be known. 

To this end, we seek the Taylor series coefficients of the $j^{th}$
coordinate functions, $W_{j}(\boldsymbol{z})\in\mathbb{C}$, $j=1,\ldots,2\nu,$
of the mapping $\boldsymbol{W}(\boldsymbol{z})$, and the third-order
Taylor coefficient $\beta_{\ell}\in\mathbb{C}$ of the polynomial
function $\boldsymbol{R}(\boldsymbol{z})$ defined in (\ref{eq:1DOFred-1}).
These unknowns will again be expressed as functions of the $j^{th}$
coordinate functions $G_{j}(\boldsymbol{y})\in\mathbb{C},$ $j=1,\ldots,2\nu,$
of the nonlinear part $\boldsymbol{G}(\boldsymbol{y})$ of the right-hand
side of the dynamical system (\ref{eq:DISCdiag-1}). Using the same
notation as in Theorem (\ref{thm:coefficients-1}), we obtain the
following expressions for the required Taylor coefficients. 
\begin{thm}
\label{thm:coefficients-1} Suppose that the assumptions of Theorem
\ref{thm: existence SSM-1} hold but with the strengthened version
\begin{equation}
s_{1}\lambda_{\ell}+s_{2}\bar{\lambda}_{\ell}\not\approx\lambda_{j},\qquad\forall j\neq\ell,\ell+1,\qquad1\le s_{1}+s_{2}\le\sigma\left(\mathcal{E}\right)\label{eq:nonresonance-2-1}
\end{equation}
of the external non-resonance condition (\ref{eq:nonresonance-1}).
Then, for any $j\in[1,2\nu]$, the $j^{th}$ coordinate function $W_{j}$
of the mapping $\boldsymbol{W}$ in (\ref{eq:W embedding-1}) and
the cubic Taylor coefficient $\beta_{\ell}$ of the conjugate map
$\boldsymbol{R}$ in (\ref{eq:invariancecond0-1}) are given by the
following formulas:

\[
w_{j}^{(1,0)}=\delta_{j\ell},\qquad w_{j}^{(0,1)}=\delta_{j(\ell+1)},
\]
\[
w_{j}^{(2,0)}=\frac{g_{j}^{\left(2@\ell\right)}}{2\lambda_{l}-\lambda_{j}},\qquad w_{j}^{(1,1)}=\frac{g_{j}^{\left(1@\ell,1@\left(\ell+1\right)\right)}}{\lambda_{\ell}+\bar{\lambda}_{\ell}-\lambda_{j}},\qquad w_{j}^{(0,2)}=\frac{g_{j}^{\left(2@\left(\ell+1\right)\right)}}{2\bar{\lambda}_{l}-\lambda_{j}},
\]
\[
w_{j}^{(3,0)}=\frac{\sum_{q=1}^{2\nu}\left(1+\delta_{\ell q}\right)g_{j}^{(1@\ell,1@q)}w_{q}^{(2,0)}+g_{j}^{(3@\ell)}}{3\lambda_{l}-\lambda_{j}},\qquad w_{j}^{(0,3)}=\frac{\sum_{q=1}^{2\nu}\left(1+\delta_{(\ell+1)q}\right)g_{j}^{(1@(\ell+1),1@q)}w_{q}^{(0,2)}+g_{j}^{(3@\left(\ell+1\right))}}{3\bar{\lambda}_{l}-\lambda_{j}}.
\]
 
\[
w_{j}^{(2,1)}=\left(1-\delta_{j\ell}\right)\frac{\sum_{q=1}^{2\nu}\left[\left(1+\delta_{\ell q}\right)g_{j}^{(1@\ell,1@q)}w_{q}^{(1,1)}+\left(1+\delta_{(\ell+1)q}\right)g_{j}^{(1@(\ell+1),1@q)}w_{q}^{(2,0)}\right]+g_{j}^{(2@\ell,1@(\ell+1))}}{2\lambda_{\ell}+\bar{\lambda}_{\ell}-\lambda_{j}},
\]
\[
w_{j}^{(1,2)}=\left(1-\delta_{j(\ell+1)}\right)\frac{\sum_{q=1}^{2\nu}\left[\left(1+\delta_{\ell q}\right)g_{j}^{(1@\ell,1@q)}w_{q}^{(0,2)}+\left(1+\delta_{(\ell+1)q}\right)g_{j}^{(1@(\ell+1),1@q)}w_{q}^{(1,1)}\right]+g_{j}^{(2@(\ell+1),1@\ell)}}{\lambda_{\ell}+2\bar{\lambda}_{\ell}-\lambda_{j}},
\]
 
\[
\beta_{\ell}=\sum_{q=1}^{2\nu}\left[\left(1+\delta_{\ell q}\right)g_{\ell}^{(1@\ell,1@q)}w_{q}^{(1,1)}+\left(1+\delta_{(\ell+1)q}\right)g_{\ell}^{(1@(\ell+1),1@q)}w_{q}^{(2,0)}\right]+g_{\ell}^{(2@\ell,1@(\ell+1))}.
\]
\end{thm}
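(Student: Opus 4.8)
The argument parallels the proof of Theorem \ref{thm:coefficients} in Appendix A, the only structural change being that the composition $\boldsymbol{W}\circ\boldsymbol{R}$ of the discrete case is replaced by the directional derivative $D\boldsymbol{W}\,\boldsymbol{R}$ in the invariance equation (\ref{eq:invariancecond0-1}). The plan is to substitute the Taylor ansatz (\ref{eq:W expand}) for $\boldsymbol{W}$ and the near-resonant cubic normal form (\ref{eq:1DOFred-1}) for $\boldsymbol{R}$ into (\ref{eq:invariancecond0-1}), and to match, for each coordinate index $j\in[1,2\nu]$, the coefficient of every monomial $z_{\ell}^{s_{1}}\bar{z}_{\ell}^{s_{2}}$ on the two sides. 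On the right-hand side, $D\boldsymbol{W}\,\boldsymbol{R}$ produces the multiplier $(s_{1}\lambda_{\ell}+s_{2}\bar{\lambda}_{\ell})\,w_{j}^{(s_{1},s_{2})}$ from the linear part $\mathrm{diag}(\lambda_{\ell},\bar{\lambda}_{\ell})$ of $\boldsymbol{R}$ acting on the corresponding term of $W_{j}$, together with the single extra contributions $\delta_{j\ell}\beta_{\ell}z_{\ell}^{2}\bar{z}_{\ell}$ and $\delta_{j(\ell+1)}\bar{\beta}_{\ell}z_{\ell}\bar{z}_{\ell}^{2}$ coming from the derivative of the linear part of $W_{j}$ acting on the cubic part of $\boldsymbol{R}$; this is the precise analogue of the multiplicative factor $\mu_{\ell}^{s_{1}}\bar{\mu}_{\ell}^{s_{2}}$ and of the term $w_{j}^{(1,0)}\beta_{\ell}$ in Appendix A.

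At order $\mathcal{O}(|\boldsymbol{z}|)$ the equation reduces to $\boldsymbol{\Lambda}\boldsymbol{W}^{(1)}=\boldsymbol{W}^{(1)}\mathrm{diag}(\lambda_{\ell},\bar{\lambda}_{\ell})$, whose simplest solution, obtained by fixing the free scaling of the tangent-space parametrisation to unity, is $w_{j}^{(1,0)}=\delta_{j\ell}$ and $w_{j}^{(0,1)}=\delta_{j(\ell+1)}$. At order $\mathcal{O}(|\boldsymbol{z}|^{2})$, since $\boldsymbol{R}$ has no quadratic part, the only quadratic contribution to $\boldsymbol{G}\circ\boldsymbol{W}$ comes from inserting the linear part of $\boldsymbol{W}$ into the quadratic part of $\boldsymbol{G}$, namely $g_{j}^{(s_{1}@\ell,s_{2}@(\ell+1))}$; comparing coefficients gives $(s_{1}\lambda_{\ell}+s_{2}\bar{\lambda}_{\ell}-\lambda_{j})\,w_{j}^{(s_{1},s_{2})}=g_{j}^{(s_{1}@\ell,s_{2}@(\ell+1))}$ for $|\boldsymbol{s}|=2$, and the formulas for $w_{j}^{(2,0)},w_{j}^{(1,1)},w_{j}^{(0,2)}$ follow by division, the denominators being nonzero (and not artificially small) by the strengthened external nonresonance condition (\ref{eq:nonresonance-2-1}) together with $\mathrm{Re}\,\lambda_{\ell}<0$.

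The order-$\mathcal{O}(|\boldsymbol{z}|^{3})$ step carries the main load. The cubic part of $\boldsymbol{G}\circ\boldsymbol{W}$ has exactly two sources: the quadratic part of $\boldsymbol{G}$ evaluated on the (linear $+$ quadratic) parts of $\boldsymbol{W}$, which produces the cross terms weighted by $(1+\delta_{\ell q})$ and $(1+\delta_{(\ell+1)q})$, and the cubic part of $\boldsymbol{G}$ evaluated on the linear part of $\boldsymbol{W}$, for which the genuinely trilinear contributions vanish and only the $g_{j}^{(3@\ell)}$, $g_{j}^{(3@(\ell+1))}$, $g_{j}^{(2@\ell,1@(\ell+1))}$, $g_{j}^{(2@(\ell+1),1@\ell)}$ terms survive. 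This is the same combinatorial bookkeeping as the $A$, $B$, $C$ computation of Appendix A, and it can either be redone directly or imported verbatim. Equating the coefficients of $z_{\ell}^{3}$ and $\bar{z}_{\ell}^{3}$ gives $w_{j}^{(3,0)}$ and $w_{j}^{(0,3)}$ with denominators $3\lambda_{\ell}-\lambda_{j}$ and $3\bar{\lambda}_{\ell}-\lambda_{j}$. For the $z_{\ell}^{2}\bar{z}_{\ell}$ coefficient with $j\neq\ell$ the multiplier is $2\lambda_{\ell}+\bar{\lambda}_{\ell}-\lambda_{j}$, nonzero, yielding $w_{j}^{(2,1)}$; for $j=\ell$ the multiplier would be $\lambda_{\ell}+\bar{\lambda}_{\ell}=2\,\mathrm{Re}\,\lambda_{\ell}$, which is nonzero but small for weakly damped modes, so, exactly as in Appendix A, we eliminate the small denominator by setting $w_{\ell}^{(2,1)}=0$ and absorbing the residual into $\boldsymbol{R}$, which identifies $\beta_{\ell}$ with the stated expression. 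The $z_{\ell}\bar{z}_{\ell}^{2}$ coefficient is treated by complex conjugation, fixing $w_{j}^{(1,2)}$ for $j\neq\ell+1$ and forcing $w_{\ell+1}^{(1,2)}=0$. Collecting these identities proves the theorem.

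The only real obstacle is the one already present in Theorem \ref{thm:coefficients}: the careful enumeration, with correct multinomial symmetry weights, of how each cubic monomial in $\boldsymbol{z}$ arises upon substituting the Taylor series of $\boldsymbol{W}$ into the quadratic and cubic parts of $\boldsymbol{G}$. No new analytic difficulty appears; indeed the continuous-time formulas follow from those of Theorem \ref{thm:coefficients} formally by the replacement $\mu_{\ell}^{s_{1}}\bar{\mu}_{\ell}^{s_{2}}\mapsto s_{1}\lambda_{\ell}+s_{2}\bar{\lambda}_{\ell}$ in every denominator and normal-form multiplier, but deriving them directly from (\ref{eq:invariancecond0-1}) keeps the continuous-time treatment self-contained.
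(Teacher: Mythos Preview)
Your proposal is correct and follows essentially the same approach as the paper's own proof: both identify that the continuous-time invariance equation (\ref{eq:invariancecond0-1}) differs from the discrete one only through the replacement of $\boldsymbol{W}\circ\boldsymbol{R}$ by $D\boldsymbol{W}\,\boldsymbol{R}$, write out the resulting $(s_{1}\lambda_{\ell}+s_{2}\bar{\lambda}_{\ell})w_{j}^{(s_{1},s_{2})}$ and $\beta_{\ell}$ contributions explicitly, reuse the $\boldsymbol{G}\circ\boldsymbol{W}$ bookkeeping from Appendix~A verbatim, and then match coefficients order by order. If anything, your outline is more detailed than the paper's terse proof, which simply records formulas (\ref{eq:change1})--(\ref{eq:change2}) and refers back to Appendix~A for the rest.
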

\begin{proof}
The algebraic equation (\ref{eq:invariancecond0-1}) is similar to
the equation (\ref{eq:invariancecond0}), which we have solved in
detail up to cubic order in the proof of Theorem (\ref{thm:coefficients}).
The first difference between the two equations is that the term $\boldsymbol{\Lambda W}$
in (\ref{eq:invariancecond0}) has the $j^{th}$ component
\begin{equation}
\left(\boldsymbol{\Lambda W}\right)_{j}=\lambda_{j}\sum_{\left|\boldsymbol{s}\right|\geq1}w_{j}^{\boldsymbol{s}}\boldsymbol{z}^{\boldsymbol{s}},\;\boldsymbol{s}\in\mathbb{N}^{2},\quad\quad w_{j}^{\boldsymbol{s}}\in\mathbb{C}.\label{eq:change1}
\end{equation}
The second difference is that instead of $\left(\boldsymbol{W}\circ\boldsymbol{R}\right)_{j}$,
the $j^{th}$ coordinate component of the right-hand side of (\ref{eq:invariancecond0})
is given by
\begin{eqnarray}
\left(D\boldsymbol{W}\circ\boldsymbol{R}\right)_{j} & = & \partial_{z_{\ell}}W_{j}(\boldsymbol{z})\left(\lambda_{\ell}z_{\ell}+\beta_{\ell}z_{\ell}^{2}\bar{z_{\ell}}\right)+\partial_{\bar{z}_{\ell}}W_{j}(\boldsymbol{z})\left(\bar{\lambda}_{\ell}\bar{z_{\ell}}+\bar{\beta_{\ell}}z_{\ell}\bar{z_{\ell}}^{2}\right)\nonumber \\
 & = & \lambda_{\ell}w_{j}^{(1,0)}z_{\ell}+2\lambda_{\ell}w_{j}^{(2,0)}z_{\ell}^{2}+\lambda_{\ell}w_{j}^{(1,1)}z_{\ell}\bar{z}_{\ell}+3\lambda_{\ell}w_{j}^{(3,0)}z_{\ell}^{3}\nonumber \\
 &  & +2\lambda_{\ell}w_{j}^{(2,1)}z_{\ell}^{2}\bar{z}_{\ell}+\lambda_{\ell}w_{j}^{(1,2)}z_{\ell}\bar{z}_{\ell}^{2}+\beta_{\ell}w_{j}^{(1,0)}z_{\ell}^{2}\bar{z}_{\ell}+\mathcal{O}\left(\left|z\right|^{4}\right)\nonumber \\
 &  & +\bar{\lambda}_{\ell}w_{j}^{(0,1)}\bar{z}_{\ell}+2\bar{\lambda}_{\ell}w_{j}^{(0,2)}\bar{z}_{\ell}^{2}+\bar{\lambda}_{\ell}w_{j}^{(1,1)}z_{\ell}\bar{z}_{\ell}+3\bar{\lambda}_{\ell}w_{j}^{(0,3)}\bar{z}_{\ell}^{3}\nonumber \\
 &  & +2\bar{\lambda}_{\ell}w_{j}^{(1,2)}z_{\ell}\bar{z}_{\ell}^{2}+\bar{\lambda}_{\ell}w_{j}^{(2,1)}z_{\ell}^{2}\bar{z}_{\ell}+\bar{\beta}_{\ell}w_{j}^{(0,1)}z_{\ell}\bar{z}_{\ell}^{2}+\mathcal{O}\left(\left|z\right|^{4}\right).\label{eq:change2}
\end{eqnarray}
Substituting formulas (\ref{eq:change1})-(\ref{eq:change2}) into
(\ref{eq:invariancecond0}), and using the expression for $\left(\boldsymbol{W}\circ\boldsymbol{G}\right)_{j}$
from the proof of Theorem (\ref{thm:coefficients}), we obtain the
formulas in the statement of Theorem (\ref{thm:coefficients-1}) after
comparing equal powers of $\boldsymbol{z}$ up to cubic order.
\end{proof}

\section{Appendix C: Analytic SSM and backbone calculations for Example 1}

To compute the SSMs $W(E_{1})$and $W(E_{3})$ in Example 1, we transform
(\ref{eq:ShawPierreModel}) to its complex standard form (\ref{eq:DISCdiag-1}).
This involves the coordinate change $\boldsymbol{x}=\left(x_{1},x_{2},v_{1},v_{2}\right)^{T}=\boldsymbol{V}\boldsymbol{y}$,
where the matrix $\boldsymbol{V}$ of eigenvectors and its inverse
are 
\[
\boldsymbol{V}=\left(\begin{array}{ccrr}
1 & 1 & 1 & 1\\
1 & 1 & -1 & -1\\
\lambda_{1} & \overline{\lambda}_{1} & \lambda_{3} & \overline{\lambda}_{3}\\
\lambda_{1} & \overline{\lambda}_{1} & -\lambda_{3} & -\overline{\lambda}_{3}
\end{array}\right),\qquad\boldsymbol{V}^{-1}=\left(\begin{array}{cccr}
-\frac{\overline{\lambda}_{1}}{2\,\left(\lambda_{1}-\overline{\lambda}_{1}\right)} & -\frac{\overline{\lambda}_{1}}{2\,\left(\lambda_{1}-\overline{\lambda}_{1}\right)} & \frac{1}{2\,\left(\lambda_{1}-\overline{\lambda}_{1}\right)} & \frac{1}{2\,\left(\lambda_{1}-\overline{\lambda}_{1}\right)}\\
\frac{\lambda_{1}}{2\,\left(\lambda_{1}-\overline{\lambda}_{1}\right)} & \frac{\lambda_{1}}{2\,\left(\lambda_{1}-\overline{\lambda}_{1}\right)} & -\frac{1}{2\,\left(\lambda_{1}-\overline{\lambda}_{1}\right)} & -\frac{1}{2\,\left(\lambda_{1}-\overline{\lambda}_{1}\right)}\\
-\frac{\overline{\lambda}_{3}}{2\,\left(\lambda_{3}-\overline{\lambda}_{3}\right)} & \frac{\overline{\lambda}_{3}}{2\,\left(\lambda_{3}-\overline{\lambda}_{3}\right)} & \frac{1}{2\,\left(\lambda_{3}-\overline{\lambda}_{3}\right)} & -\frac{1}{2\,\left(\lambda_{3}-\overline{\lambda}_{3}\right)}\\
\frac{\lambda_{3}}{2\,\left(\lambda_{3}-\overline{\lambda}_{3}\right)} & -\frac{\lambda_{3}}{2\,\left(\lambda_{3}-\overline{\lambda}_{3}\right)} & -\frac{1}{2\,\left(\lambda_{3}-\overline{\lambda}_{3}\right)} & \frac{1}{2\,\left(\lambda_{3}-\overline{\lambda}_{3}\right)}
\end{array}\right).
\]
The transformed system (\ref{eq:ShawPierreModel}) then takes the
form
\begin{eqnarray*}
\dot{\boldsymbol{y}} & = & \boldsymbol{\Lambda}\boldsymbol{y}+\boldsymbol{G}(\boldsymbol{y}),\qquad\boldsymbol{y}\in\mathbb{C}^{4},\qquad\boldsymbol{\Lambda}=\mathrm{diag}(\lambda_{1},\lambda_{2},\lambda_{3},\lambda_{4}),\quad\boldsymbol{G}(\boldsymbol{y})=\mathcal{O}\left(\left|\boldsymbol{y}\right|^{2}\right),\\
\boldsymbol{G}(\boldsymbol{y}) & = & \boldsymbol{V}^{-1}\boldsymbol{f}(\boldsymbol{V}\boldsymbol{y})=-\frac{\kappa}{2}\gamma(\boldsymbol{y})\left(\begin{array}{c}
\frac{1}{\lambda_{1}-\overline{\lambda}_{1}}\\
\frac{1}{\overline{\lambda}_{1}-\lambda_{1}}\\
\frac{1}{\lambda_{3}-\overline{\lambda}_{3}}\\
\frac{1}{\overline{\lambda}_{3}-\lambda_{3}}
\end{array}\right)=\frac{i\kappa\,\gamma(\boldsymbol{y})}{4}\left(\begin{array}{c}
\frac{1}{\mathrm{Im\,}\lambda_{1}}\\
-\frac{1}{\mathrm{Im\,}\lambda_{1}}\\
\frac{1}{\mathrm{Im\,}\lambda_{3}}\\
-\frac{1}{\mathrm{Im\,}\lambda_{3}}
\end{array}\right),
\end{eqnarray*}
where
\begin{eqnarray*}
\gamma(\boldsymbol{y}) & = & y_{1}^{3}+y_{2}^{3}+y_{3}^{3}+y_{4}^{3}\\
 &  & +3(y_{1}^{2}y_{2}+y_{1}^{2}y_{3}+y_{1}^{2}y_{4}+y_{2}^{2}y_{1}+y_{2}^{2}y_{3}+y_{2}^{2}y_{4}+y_{3}^{2}y_{1}+y_{3}^{2}y_{2}+y_{3}^{2}y_{4}+y_{4}^{2}y_{1}+y_{4}^{2}y_{2}+y_{4}^{2}y_{3})\\
 &  & +6(y_{1}y_{2}y_{3}+y_{1}y_{2}y_{4}+y_{1}y_{3}y_{4}+y_{2}y_{3}y_{4})
\end{eqnarray*}
Theorem 4 of Appendix B then gives the following coefficients for
$\ell=1$: 
\[
w_{1}^{(1,0)}=1,\quad w_{2}^{(1,0)}=w_{3}^{(1,0)}=w_{4}^{(1,0)}=0,\quad w_{2}^{(0,1)}=1,\quad w_{1}^{(0,1)}=w_{3}^{(0,1)}=w_{4}^{(0,1)}=0,
\]
\\
\[
w_{j}^{(2,0)}=w_{j}^{(1,1)}=w_{j}^{(0,2)}=0,\qquad j=1,2,3,4,
\]
\\
\begin{eqnarray*}
w_{1}^{(3,0)} & = & \frac{i\kappa}{8\lambda_{1}\mathrm{Im\,}\lambda_{1}},\quad\qquad\qquad w_{2}^{(3,0)}=-\frac{i\kappa}{4\left(3\lambda_{1}-\bar{\lambda}_{1}\right)\mathrm{Im\,}\lambda_{1}},\\
w_{3}^{(3,0)} & = & \frac{i\kappa}{4\left(3\lambda_{1}-\lambda_{3}\right)\mathrm{Im\,}\lambda_{3}},\quad w_{4}^{(3,0)}=-\frac{i\kappa}{4\left(3\lambda_{1}-\bar{\lambda}_{3}\right)\mathrm{Im\,}\lambda_{3}},\\
w_{1}^{(0,3)} & = & \frac{i\kappa}{4\left(3\bar{\lambda}_{1}-\lambda_{1}\right)\mathrm{Im\,}\lambda_{1}},\quad w_{2}^{(0,3)}=-\frac{i\kappa}{8\bar{\lambda}_{1}\mathrm{Im\,}\lambda_{1}},\quad\\
w_{3}^{(0,3)} & = & \frac{i\kappa}{4\left(3\bar{\lambda}_{1}-\lambda_{3}\right)\mathrm{Im\,}\lambda_{3}},\quad w_{4}^{(0,3)}=-\frac{i\kappa}{4\left(3\bar{\lambda}_{1}-\bar{\lambda}_{3}\right)\mathrm{Im\,}\lambda_{3}},
\end{eqnarray*}
 
\[
w_{1}^{(2,1)}=0,\quad w_{2}^{(2,1)}=\frac{-i3\kappa}{8\lambda_{1}\mathrm{Im\,}\lambda_{1}},\quad w_{3}^{(2,1)}=\frac{i3\kappa}{4\left(2\lambda_{1}+\bar{\lambda}_{1}-\lambda_{3}\right)\mathrm{Im\,}\lambda_{3}},\quad w_{4}^{(2,1)}=\frac{-i3\kappa}{4\left(2\lambda_{1}+\bar{\lambda}_{1}-\bar{\lambda}_{3}\right)\mathrm{Im\,}\lambda_{3}},
\]
\[
w_{1}^{(1,2)}=\frac{i3\kappa}{8\bar{\lambda}_{1}\mathrm{Im\,}\lambda_{1}},\quad w_{2}^{(1,2)}=0,\quad w_{3}^{(1,2)}=\frac{i3\kappa}{4\left(\lambda_{1}+2\bar{\lambda}_{1}-\lambda_{3}\right)\mathrm{Im\,}\lambda_{3}},\quad w_{4}^{(1,2)}=\frac{-i3\kappa}{4\left(\lambda_{1}+2\bar{\lambda}_{1}-\bar{\lambda}_{3}\right)\mathrm{Im\,}\lambda_{3}},
\]
 
\[
\beta_{1}=\frac{i3\kappa}{4\mathrm{Im\,}\lambda_{1}}.
\]
The coefficients for $\ell=2$ are the complex conjugates of the above.
The transformation for the SSM $W(\mathcal{E}_{1})$ up to cubic order
is therefore of the form
\begin{equation}
\boldsymbol{W}(z_{1},\bar{z}_{1})=\begin{pmatrix}\begin{array}{l}
z_{1}+\frac{i\kappa z_{1}^{3}}{8\lambda_{1}\mathrm{Im\,}\lambda_{1}}+\frac{i\kappa\bar{z}_{1}^{3}}{4\left(3\bar{\lambda}_{1}-\lambda_{1}\right)\mathrm{Im\,}\lambda_{1}}+\frac{i3\kappa z_{1}\bar{z}_{1}^{2}}{8\bar{\lambda}_{1}\mathrm{Im\,}\lambda_{1}}\\
\bar{z}_{1}-\frac{i\kappa z_{1}^{3}}{4\left(3\lambda_{1}-\bar{\lambda}_{1}\right)\mathrm{Im\,}\lambda_{1}}-\frac{i\kappa\bar{z}_{1}^{3}}{8\bar{\lambda}_{1}\mathrm{Im\,}\lambda_{1}}-\frac{i3\kappa z_{1}^{2}\bar{z}_{1}}{8\lambda_{1}\mathrm{Im\,}\lambda_{1}}\\
\frac{i\kappa z_{1}^{3}}{4\left(3\lambda_{1}-\lambda_{3}\right)\mathrm{Im\,}\lambda_{3}}+\frac{i\kappa\bar{z}_{1}^{3}}{4\left(3\bar{\lambda}_{1}-\lambda_{3}\right)\mathrm{Im\,}\lambda_{3}}+\frac{i3\kappa z_{1}^{2}\bar{z}_{1}}{4\left(2\lambda_{1}+\bar{\lambda}_{1}-\lambda_{3}\right)\mathrm{Im\,}\lambda_{1}}+\frac{i3\kappa z_{1}\bar{z}_{1}^{2}}{4\left(\lambda_{1}+2\bar{\lambda}_{1}-\lambda_{3}\right)\mathrm{Im\,}\lambda_{3}}\\
-\frac{i\kappa z_{1}^{3}}{4\left(3\lambda_{1}-\bar{\lambda}_{3}\right)\mathrm{Im\,}\lambda_{3}}-\frac{i\kappa\bar{z}_{1}^{3}}{4\left(3\bar{\lambda}_{1}-\bar{\lambda}_{3}\right)\mathrm{Im\,}\lambda_{3}}-\frac{i3\kappa z_{1}^{2}\bar{z}_{1}}{4\left(2\lambda_{1}+\bar{\lambda}_{1}-\bar{\lambda}_{3}\right)\mathrm{Im\,}\lambda_{3}}-\frac{i3\kappa z_{1}\bar{z}_{1}^{2}}{4\left(\lambda_{1}+2\bar{\lambda}_{1}-\bar{\lambda}_{3}\right)\mathrm{Im\,}\lambda_{3}}
\end{array}\end{pmatrix}.\label{eq:ShPiW-1-1}
\end{equation}
Passing to polar coordinates via the substitution $z_{1}=\rho_{1}e^{i\theta_{1}}$,
the corresponding leading-order dynamics (\ref{eq:RadAmpl-1})-(\ref{eq:RadAngle-1})
on $W(\mathcal{E}_{1})$ is given by the equations 
\begin{eqnarray}
\dot{\rho_{1}} & = & -\frac{c}{2}\rho_{1},\label{eq:RadAmpl-1-1-1}\\
\dot{\theta_{1}} & = & \frac{1}{2}\sqrt{4k_{0}-c^{2}}+\frac{3\kappa}{2\sqrt{4k_{0}-c^{2}}}\rho_{1}^{2}.\label{eq:RadAngle-1-1-1}
\end{eqnarray}
As obtained in formula (\ref{eq:omega cont}), the instantaneous frequency
of the oscillations on the SSM $\mathcal{E}_{1}$ is then
\[
\omega(\rho_{1})=\frac{1}{2}\left(\sqrt{4k_{0}-c^{2}}+\frac{3\kappa}{\sqrt{4k_{0}-c^{2}}}\rho_{1}^{2}\right).
\]
 The squared $L^{2}$ norm of the amplitude in the original $x$ coordinates
\begin{eqnarray*}
\left[\mathrm{Amp}(\rho_{1})\right]^{2} & = & \frac{1}{2\pi}\int_{0}^{2\pi}\left|\boldsymbol{V_{x}W}(\rho_{1}e^{i\theta_{1}},\rho_{1}e^{-i\theta_{1}})\right|^{2}d\theta=\\
 & = & \frac{1}{2\pi}\int_{0}^{2\pi}\left|W_{1}+\bar{W}_{1}+W_{3}+\bar{W}_{3}\right|^{2}d\theta+\frac{1}{2\pi}\int_{0}^{2\pi}\left|W_{1}+\bar{W}_{1}-W_{3}-\bar{W}_{3}\right|^{2}d\theta\\
 & = & \frac{1}{2\pi}\int_{0}^{2\pi}\left|z_{1}+\bar{z}_{1}+\mathcal{O}\left(\left|\boldsymbol{z}\right|^{3}\right)\right|^{2}d\theta+\frac{1}{2\pi}\int_{0}^{2\pi}\left|z_{1}+\bar{z}_{1}+\mathcal{O}\left(\left|\boldsymbol{z}\right|^{3}\right)\right|^{2}d\theta\\
 & = & \frac{1}{2\pi}\int_{0}^{2\pi}\left[4z_{1}\bar{z}_{1}+\mathcal{O}\left(\left|\boldsymbol{z}\right|^{4}\right)\right]d\theta=4\rho_{1}^{2}+\mathcal{O}\left(\rho_{1}^{4}\right),
\end{eqnarray*}
which gives $\mathrm{Amp}(\rho_{1})\approx2\rho_{1}$ for small $\rho_{1}$.
For higher values of $\rho_{1}$, the exact dependence form of $\mathrm{Amp}(\rho_{1})$
can be found by evaluating the above integral numerically.

Similarly, for $\ell=3,$ Theorem 1 gives the coefficients 
\[
w_{3}^{(1,0)}=1,\quad w_{1}^{(1,0)}=w_{2}^{(1,0)}=w_{4}^{(1,0)}=0,\quad w_{4}^{(0,1)}=1,\quad w_{1}^{(0,1)}=w_{2}^{(0,1)}=w_{3}^{(0,1)}=0,
\]
\[
w_{j}^{(2,0)}=w_{j}^{(1,1)}=w_{j}^{(0,2)}=0,\qquad j=1,2,3,4,
\]

\begin{eqnarray*}
w_{1}^{(3,0)} & = & \frac{i\kappa}{4\left(3\lambda_{3}-\lambda_{1}\right)\mathrm{Im\,}\lambda_{1}},\quad w_{2}^{(3,0)}=-\frac{i\kappa}{4\left(3\lambda_{3}-\bar{\lambda}_{1}\right)\mathrm{Im\,}\lambda_{1}},\quad\\
 &  & w_{3}^{(3,0)}=\frac{i\kappa}{8\lambda_{3}\mathrm{Im\,}\lambda_{3}},\quad w_{4}^{(3,0)}=-\frac{i\kappa}{4\left(3\lambda_{3}-\bar{\lambda}_{3}\right)\mathrm{Im\,}\lambda_{3}},\\
w_{1}^{(0,3)} & = & \frac{i\kappa}{4\left(3\bar{\lambda}_{3}-\lambda_{1}\right)\mathrm{Im\,}\lambda_{1}},\quad w_{2}^{(0,3)}=-\frac{i\kappa}{4\left(3\bar{\lambda}_{3}-\bar{\lambda}_{1}\right)\mathrm{Im\,}\lambda_{1}},\quad\\
 &  & w_{3}^{(0,3)}=\frac{i\kappa}{4\left(3\bar{\lambda}_{3}-\lambda_{3}\right)\mathrm{Im\,}\lambda_{3}},\quad w_{4}^{(0,3)}=-\frac{i\kappa}{8\bar{\lambda}_{3}\mathrm{Im\,}\lambda_{3}},
\end{eqnarray*}
 
\[
w_{1}^{(2,1)}=\frac{i3\kappa}{4\left(2\lambda_{3}+\bar{\lambda}_{3}-\lambda_{1}\right)\mathrm{Im\,}\lambda_{1}},\quad w_{2}^{(2,1)}=\frac{-i3\kappa}{4\left(2\lambda_{3}+\bar{\lambda}_{3}-\bar{\lambda}_{1}\right)\mathrm{Im\,}\lambda_{1}},\quad w_{3}^{(2,1)}=0,\quad w_{4}^{(2,1)}=\frac{-i3\kappa}{8\lambda_{3}\mathrm{Im\,}\lambda_{3}},
\]
\[
w_{1}^{(1,2)}=\frac{i3\kappa}{4\left(\lambda_{3}+2\bar{\lambda}_{3}-\lambda_{1}\right)\mathrm{Im\,}\lambda_{1}},\quad w_{2}^{(1,2)}=\frac{-i3\kappa}{4\left(\lambda_{3}+2\bar{\lambda}_{3}-\bar{\lambda}_{1}\right)\mathrm{Im\,}\lambda_{1}},\quad w_{3}^{(1,2)}=\frac{i3\kappa}{8\bar{\lambda}_{3}\mathrm{Im\,}\lambda_{3}},\quad w_{4}^{(1,2)}=0,
\]
 
\[
\beta_{3}=\frac{i3\kappa}{4\mathrm{Im\,}\lambda_{3}},
\]
whose complex conjugates are the corresponding coefficients for $\ell=4.$ 

Similarly, the transformation for the SSM $W(\mathcal{E}_{2})$ up
to cubic order is of the form
\begin{equation}
\boldsymbol{W}(z_{3},\bar{z}_{3})=\begin{pmatrix}\begin{array}{l}
\frac{i\kappa z_{3}^{3}}{4\left(3\lambda_{3}-\lambda_{1}\right)\mathrm{Im\,}\lambda_{1}}+\frac{i\kappa\bar{z}_{3}^{3}}{4\left(3\bar{\lambda}_{3}-\lambda_{1}\right)\mathrm{Im\,}\lambda_{1}}+\frac{i3\kappa z_{3}^{2}\bar{z}_{3}}{4\left(2\lambda_{3}+\bar{\lambda}_{3}-\lambda_{1}\right)\mathrm{Im\,}\lambda_{1}}+\frac{i3\kappa z_{3}\bar{z}_{3}^{2}}{4\left(\lambda_{3}+2\bar{\lambda}_{3}-\lambda_{1}\right)\mathrm{Im\,}\lambda_{1}}\\
-\frac{i\kappa z_{3}^{3}}{4\left(3\lambda_{3}-\bar{\lambda}_{1}\right)\mathrm{Im\,}\lambda_{1}}-\frac{i\kappa\bar{z}_{3}^{3}}{4\left(3\bar{\lambda}_{3}-\bar{\lambda}_{1}\right)\mathrm{Im\,}\lambda_{1}}-\frac{i3\kappa z_{3}^{2}\bar{z}_{3}}{4\left(2\lambda_{3}+\bar{\lambda}_{3}-\bar{\lambda}_{1}\right)\mathrm{Im\,}\lambda_{1}}-\frac{i3\kappa z_{3}\bar{z}_{3}^{2}}{4\left(\lambda_{3}+2\bar{\lambda}_{3}-\bar{\lambda}_{1}\right)\mathrm{Im\,}\lambda_{1}}\\
z_{3}+\frac{i\kappa z_{3}^{3}}{8\lambda_{3}\mathrm{Im\,}\lambda_{3}}+\frac{i\kappa\bar{z}_{3}^{3}}{4\left(3\bar{\lambda}_{3}-\lambda_{3}\right)\mathrm{Im\,}\lambda_{3}}+\frac{i3\kappa z_{3}\bar{z}_{3}^{2}}{8\bar{\lambda}_{3}\mathrm{Im\,}\lambda_{3}}\\
\bar{z}_{3}-\frac{i\kappa z_{3}^{3}}{4\left(3\lambda_{3}-\bar{\lambda}_{3}\right)\mathrm{Im\,}\lambda_{3}}-\frac{i\kappa\bar{z}_{3}^{3}}{8\bar{\lambda}_{3}\mathrm{Im\,}\lambda_{3}}-\frac{i3\kappa z_{3}^{2}\bar{z}_{3}}{8\lambda_{3}\mathrm{Im\,}\lambda_{3}}
\end{array}\end{pmatrix},\label{eq:ShPiW-1}
\end{equation}
with the corresponding dynamics on the first SSM is described by (cf.
(\ref{eq:RadAmpl-1})-(\ref{eq:RadAngle-1})) 
\begin{eqnarray}
\dot{\rho}_{3} & = & -\frac{3c}{2}\rho_{3},\label{eq:RadAmpl-1-1}\\
\dot{\theta}_{3} & = & \frac{1}{2}\sqrt{3\left(4k_{0}-3c^{2}\right)}+\frac{\sqrt{3}\kappa}{2\sqrt{4k_{0}-3c^{2}}}\rho_{3}^{2}.\label{eq:RadAngle-1-1}
\end{eqnarray}
Following the same calculation as for $\ell=1$, we obtain 
\begin{eqnarray*}
\omega(\rho_{3}) & = & \frac{1}{2}\left(\sqrt{3\left(4k_{0}-3c^{2}\right)}+\frac{\sqrt{3}\kappa}{\sqrt{4k_{0}-3c^{2}}}\rho_{3}^{2}\right),\\
\mathrm{Amp}(\rho_{3}) & \approx & 2\rho_{3}.
\end{eqnarray*}

\bibliographystyle{plainurl}
\bibliography{AllRef}

\begin{thebibliography}{10}

\bibitem{avramov2013review}
K.~V. Avramov and Y.~V. Mikhlin.
\newblock Review of applications of nonlinear normal modes for vibrating
  mechanical systems.
\newblock {\em Applied Mechanics Reviews}, 65(2):020801, 2013.

\bibitem{billings2013}
S.~A. Billings.
\newblock {\em Nonlinear System Identification: Narmax Methods in the Time,
  Frequency, and Spatio-Temporal Domains}.
\newblock John Wiley \& Sons, 2013.

\bibitem{CabreLlave2003}
X.~Cabre, E.~Fontich, and R.~{de la Llave}.
\newblock The parameterization method for invariant manifolds {I}: {M}anifolds
  associated to non-resonant subspaces.
\newblock {\em Indiana Univ. Math. J.}, 52:283--328, 2003.

\bibitem{Cirillo2015}
G.~I. Cirillo, A.~Mauroy, L.~Renson, G.~Kerschen, and R.~Sepulchre.
\newblock A spectral characterization of nonlinear normal modes.
\newblock {\em Journal of Sound and Vibration}, 377:284--301, 9 2016.
\newblock \href {http://dx.doi.org/10.1016/j.jsv.2016.05.016}
  {\path{doi:10.1016/j.jsv.2016.05.016}}.

\bibitem{Ehrhardt2016612}
D.~A. Ehrhardt and M.~S. Allen.
\newblock Measurement of nonlinear normal modes using multi-harmonic stepped
  force appropriation and free decay.
\newblock {\em Mechanical Systems and Signal Processing}, 76--77:612 -- 633,
  2016.
\newblock \href
  {http://dx.doi.org/http://dx.doi.org/10.1016/j.ymssp.2016.02.063}
  {\path{doi:http://dx.doi.org/10.1016/j.ymssp.2016.02.063}}.

\bibitem{EwinsBook}
D.~J. Ewins.
\newblock {\em Modal Testing: Theory, Practice and Application (Mechanical
  Engineering Research Studies: Engineering Dynamics Series)}.
\newblock Wiley-Blackwell, 2000.

\bibitem{Feldman1997475}
M.~Feldman.
\newblock Non-linear free vibration identification via the {H}ilbert transform.
\newblock {\em Journal of Sound and Vibration}, 208(3):475 -- 489, 1997.

\bibitem{Fenichel}
N.~Fenichel.
\newblock Persistence and smoothness of invariant manifolds for flows.
\newblock {\em Indiana Univ. Math. J.}, 21:193--226, 1972.

\bibitem{Gabale20112596}
A.~P. Gabale and S.~C. Sinha.
\newblock Model reduction of nonlinear systems with external periodic
  excitations via construction of invariant manifolds.
\newblock {\em Journal of Sound and Vibration}, 330(11):2596 -- 2607, 2011.
\newblock \href {http://dx.doi.org/http://dx.doi.org/10.1016/j.jsv.2010.12.013}
  {\path{doi:http://dx.doi.org/10.1016/j.jsv.2010.12.013}}.

\bibitem{Haller2016}
G.~Haller and S.~Ponsioen.
\newblock Nonlinear normal modes and spectral submanifolds: existence,
  uniqueness and use in model reduction.
\newblock {\em Nonlinear Dynamics}, pages 1--42, 2016.
\newblock \href {http://dx.doi.org/10.1007/s11071-016-2974-z}
  {\path{doi:10.1007/s11071-016-2974-z}}.

\bibitem{Haro2016}
{\`A}.~Haro, M.~Canadell, Al. Luque, J.~M. Mondelo, and J.-L. Figueras.
\newblock {\em The Parameterization Method for Invariant Manifolds: From
  Rigorous Results to Effective Computations}, volume 195 of {\em Applied
  Mathematical Sciences}.
\newblock Springer, 2016.

\bibitem{huke2006embedding}
J.~P. Huke.
\newblock Embedding nonlinear dynamical systems: A guide to {T}akens' theorem.
\newblock Technical report, Internal Report, DRA Malvern, 2006.

\bibitem{jiang2005nonlinear}
D.~Jiang, C.~Pierre, and S.~W. Shaw.
\newblock Nonlinear normal modes for vibratory systems under harmonic
  excitation.
\newblock {\em Journal of sound and vibration}, 288(4):791--812, 2005.

\bibitem{KELLEY1967472}
A.~Kelley.
\newblock On the liapounov subcenter manifold.
\newblock {\em Journal of Mathematical Analysis and Applications}, 18(3):472 --
  478, 1967.
\newblock \href
  {http://dx.doi.org/http://dx.doi.org/10.1016/0022-247X(67)90039-X}
  {\path{doi:http://dx.doi.org/10.1016/0022-247X(67)90039-X}}.

\bibitem{kerschen2009nonlinear}
G.~Kerschen, M.~Peeters, J.-C. Golinval, and A.~F. Vakakis.
\newblock Nonlinear normal modes, part i: A useful framework for the structural
  dynamicist.
\newblock {\em Mechanical Systems and Signal Processing}, 23(1):170--194, 2009.

\bibitem{Kerschen2006505}
G.~Kerschen, K.~Worden, A.~F. Vakakis, and J.-C. Golinval.
\newblock Past, present and future of nonlinear system identification in
  structural dynamics.
\newblock {\em Mechanical Systems and Signal Processing}, 20(3):505 -- 592,
  2006.

\bibitem{mikhlin2010nonlinears}
Y.~V. Mikhlin and K.~V. Avramov.
\newblock Nonlinears normal modes for vibrating mechanical systems. review of
  theoretical developments.
\newblock {\em Applied Mechanics Reviews}, 63(6):060802, 2010.

\bibitem{Neild20140404}
S.~A. Neild, A.~R. Champneys, D.~J. Wagg, T.~L. Hill, and A.~Cammarano.
\newblock The use of normal forms for analysing nonlinear mechanical
  vibrations.
\newblock {\em Phil. Trans. R. Soc. A}, 373(2051), 2015.
\newblock \href {http://dx.doi.org/10.1098/rsta.2014.0404}
  {\path{doi:10.1098/rsta.2014.0404}}.

\bibitem{Peeters2011a}
M.~Peeters, G.~Kerschen, and J.C. Golinval.
\newblock Dynamic testing of nonlinear vibrating structures using nonlinear
  normal modes.
\newblock {\em Journal of Sound and Vibration}, 330(3):486 -- 509, 2011.
\newblock \href {http://dx.doi.org/http://dx.doi.org/10.1016/j.jsv.2010.08.028}
  {\path{doi:http://dx.doi.org/10.1016/j.jsv.2010.08.028}}.

\bibitem{Peeters2011b}
M.~Peeters, G.~Kerschen, and J.C. Golinval.
\newblock Modal testing of nonlinear vibrating structures based on nonlinear
  normal modes: Experimental demonstration.
\newblock {\em Mechanical Systems and Signal Processing}, 25(4):1227 -- 1247,
  2011.
\newblock \href
  {http://dx.doi.org/http://dx.doi.org/10.1016/j.ymssp.2010.11.006}
  {\path{doi:http://dx.doi.org/10.1016/j.ymssp.2010.11.006}}.

\bibitem{Peeters}
M.~Peeters, R.~Viguie, G.~Serandour, G.~Kerschen, and J.~C. Golinval.
\newblock {Nonlinear normal modes, Part II: Toward a practical computation
  using numerical continuation techniques}.
\newblock {\em {Mech. Syst. Signal Pr.}}, {23}({1, SI}):195--216, {2009}.
\newblock \href {http://dx.doi.org/{10.1016/j.ymssp.2008.04.003}}
  {\path{doi:{10.1016/j.ymssp.2008.04.003}}}.

\bibitem{Pesheck2001}
E.~Pesheck, N.~Boivin, C.~Pierre, and S.~W. Shaw.
\newblock Nonlinear modal analysis of structural systems using multi-mode
  invariant manifolds.
\newblock {\em Nonlinear Dynamics}, 25(1):183--205, 2001.
\newblock \href {http://dx.doi.org/10.1023/A:1012910918498}
  {\path{doi:10.1023/A:1012910918498}}.

\bibitem{Rosenberg66}
R.M. Rosenberg.
\newblock On nonlinear vibrations of systems with many degrees of freedom.
\newblock {\em Advances in Applied Mechanics}, 9:155--242, 1966.

\bibitem{ShawPierre}
S.~W. Shaw and C~Pierre.
\newblock Normal-modes of vibration for nonlinear continuous systems.
\newblock {\em {J. Sound Vibr.}}, {169}({3}):319--347, {1994}.

\bibitem{SieberPRL2008}
J.~Sieber, A.~Gonzalez-Buelga, S.~A. Neild, D.~J. Wagg, and B.~Krauskopf.
\newblock Experimental continuation of periodic orbits through a fold.
\newblock {\em Phys. Rev. Lett.}, 100:244101, 2008.
\newblock \href {http://dx.doi.org/10.1103/PhysRevLett.100.244101}
  {\path{doi:10.1103/PhysRevLett.100.244101}}.

\bibitem{Stark1999}
J.~Stark.
\newblock Delay embeddings for forced systems. {I}. {D}eterministic forcing.
\newblock {\em Journal of Nonlinear Science}, 9(3):255--332, 1999.
\newblock \href {http://dx.doi.org/10.1007/s003329900072}
  {\path{doi:10.1007/s003329900072}}.

\bibitem{KnutURL}
R.~Szalai.
\newblock {\em Knut: a numerical continuation software}, 2005--2015.
\newblock URL: \url{https://github.com/rs1909/knut}.

\bibitem{Takens1981}
F.~Takens.
\newblock {\em Dynamical Systems and Turbulence, Warwick 1980: Proceedings of a
  Symposium Held at the University of Warwick 1979/80}, chapter Detecting
  strange attractors in turbulence, pages 366--381.
\newblock Springer Berlin Heidelberg, Berlin, Heidelberg, 1981.
\newblock \href {http://dx.doi.org/10.1007/BFb0091924}
  {\path{doi:10.1007/BFb0091924}}.

\bibitem{vakakis2001normal}
A.~F. Vakakis, L.~I. Manevitch, Y.~V. Mikhlin, V.~N. Pilipchuk, and A.~A.
  Zevin.
\newblock {\em Normal modes and localization in nonlinear systems}.
\newblock Springer, 2001.

\end{thebibliography}

\end{document}